\numberwithin{figure}{section}
\def\R{{\mathbb R}}
\def\C{{\mathbb C}}
\def\T{{\mathbb T}}
\def\Z{{\mathbb Z}}
\def\N{{\mathbb N}}
\def\la{\langle}
\def\ra{\rangle}
\def\e{\varepsilon}
\def\build#1_#2^#3{\mathrel{
\mathop{\kern 0pt#1}\limits_{#2}^{#3}}}
\def\td_#1,#2{\mathrel{\mathop{\build\longrightarrow_{#1\rightarrow #2}^{}}}}
\DeclareFontFamily{U}{MnSymbolC}{}
\DeclareSymbolFont{MnSyC}{U}{MnSymbolC}{m}{n}
\DeclareFontShape{U}{MnSymbolC}{m}{n}{
    <-6>  MnSymbolC5
   <6-7>  MnSymbolC6
   <7-8>  MnSymbolC7
   <8-9>  MnSymbolC8
   <9-10> MnSymbolC9
  <10-12> MnSymbolC10
  <12->   MnSymbolC12}{}
\DeclareMathSymbol{\intprod}{\mathbin}{MnSyC}{'270}
\newtheorem{theorem}{Theorem}
\newtheorem{corollary}{Corollary}
\newtheorem{proposition}{Proposition}
\newtheorem{lemma}{Lemma}
\newtheorem{remark}{Remark}
\newtheorem{definition}{Definition}
\begin{document}
\title[Sharp well-posedness results of the BO equation]{Sharp well-posedness results of the Benjamin-Ono equation in $H^{s}(\T, \R)$ and
qualitative properties of its solution}
\author[P. G\'erard]{Patrick G\'erard}
\address{Laboratoire de Math\'ematiques d'Orsay, CNRS, Universit\'e Paris--Saclay, 91405 Orsay, France} \email{{\tt patrick.gerard@math.u-psud.fr}}
\author[T. Kappeler]{Thomas Kappeler}
\address{Institut f\"ur Mathematik, Universit\"at Z\"urich, Winterthurerstrasse 190, 8057 Zurich, Switzerland} 
\email{{\tt thomas.kappeler@math.uzh.ch}}
\author[P. Topalov]{Petar Topalov}
\address{Department of Mathematics, Northeastern University,
567 LA (Lake Hall), Boston, MA 0215, USA}
\email{{\tt p.topalov@northeastern.edu}}

\subjclass[2010]{ 37K15 primary, 47B35 secondary}

\date{\textcolor{red}{March 25,  2020}}

\begin{abstract}
We prove that the Benjamin--Ono equation on the torus
is globally in time well-posed in 
the Sobolev space $H^{s}(\T, \R)$ for any $s > - 1/2$ and ill-posed for $s \le - 1/2$. 
Hence the critical Sobolev exponent $s_c=-1/2$
of the Benjamin--Ono equation is the threshold 
for well-posedness on the torus. 
The obtained solutions are almost periodic in time. 
Furthermore, we prove that  the traveling wave solutions of the Benjamin--Ono equation 
on the torus are orbitally stable in $H^{s}(\T ,\R)$ for any $ s > - 1/2$. 
Novel conservation laws and a nonlinear
Fourier transform on $H^{s}(\T, \R)$ with $s > - 1/2$
are key ingredients into the proofs of these results.
\end{abstract}

\keywords{Benjamin--Ono equation, well-posedness,
critical Sobolev exponent, almost periodicity of solutions, 
orbital stability of traveling waves}

\thanks{We would like to warmly thank J.C. Saut for
very valuable discussions and for
making us aware of many references, in particular
\cite{AN}. We also thank T. Oh for bringing reference 
\cite{AH} to our attention. 
T.K. is partially supported by the Swiss National Science Foundation.
P.T. is partially supported by the Simons Foundation, Award \#526907.}

\maketitle

\tableofcontents

\medskip

\section{Introduction}\label{Introduction}

In this paper we consider the Benjamin-Ono (BO) equation on the torus,
\begin{equation}\label{BO}
\partial_t v = H\partial^2_x v - \partial_x (v^2)\,, \qquad x \in \T:= \R/2\pi\Z\,, \,\,\, t \in \R,
\end{equation}
where $v\equiv v(t, x)$ is real valued and $H$ denotes the Hilbert transform, defined for $f = \sum_{n \in \mathbb Z} \widehat f(n) e^{ i n x} $,
$\widehat f(n) = \frac{1}{2\pi}\int_0^{2\pi} f(x) e^{-  i n x} dx$,  by
$$
H f(x) := \sum_{n \in \Z} -i \ \text{sign}(n) \widehat f(n) \ e^{inx}
$$
with $\text{sign}(\pm n):= \pm 1$ for any $n \ge 1$, whereas $\text{sign}(0) := 0$. This pseudo-differential equation ($\Psi$DE)
in one space dimension has been introduced by Benjamin \cite{Benj} and Ono \cite{Ono} to model long, uni-directional internal gravity waves 
in a two-layer fluid. It has been extensively studied, 
both on the real line $\R$ and on the torus $\T$. 
For an excellent survey, including the derivation of  \eqref{BO}, we refer to the recent article by Saut \cite{Sa}.

Our aim is to study  low regularity solutions of the BO equation on $\T$. To state our results, we first need to review
some classical results on the well-posedness problem of \eqref{BO}.
Based on work of Saut \cite{Sa0}, 
Abdelouhab, Bona, Felland, and Saut
proved in \cite{ABFS} that
for any $s \ge 3/2$, equation \eqref{BO} is globally in time 
well-posed on the Sobolev space $H^s_r \equiv H^s(\T, \R) $ 
(endowed with the standard norm $\| \cdot \|_s$, defined by \eqref{Hs norm} below), meaning the following:
\begin{itemize}
\item[(S1)]
{\em Existence and uniqueness of classical solutions:} For any initial data $v_0 \in H^{s}_r$, there exists 
a unique curve $v : \R \to H^s_r$ in
$C(\R, H^s_r) \cap C^1(\R, H^{s-2}_r)$ so that
$v(0) = v_0$ and for any $t \in \R$, equation \eqref{BO} is satisfied in $H^{s-2}_r$.   
(Since $H^s_r$ is an algebra, one has 
$\partial_xv(t)^2 \in H^{s-1}_r$ for any time $t \in \R$.)
\item[(S2)]{\em Continuity of solution map:}
The solution map 
$\mathcal S : H^s_r \to C(\mathbb R, H^s_r)$
is continuous, meaning that for any $v_0 \in H^s_r,$
$T > 0$, and $\varepsilon > 0$ there exists $\delta > 0,$ 
so that for any $w_0 \in H^s_r$ with $\|w_0 - v_0 \|_s < \delta$,
the solutions $w(t) = \mathcal S(t, w_0)$ and 
$v(t)= \mathcal S(t, v_0)$ of \eqref{BO}
with initial data $w(0) = w_0$ and, respectively, $v(0) = v_0$
satisfy $\sup_{|t| \le T} \| w(t) - v(t) \|_s \le  \varepsilon$.
\end{itemize}
In a straightforward way one verifies that
\begin{equation}\label{CL}
 \mathcal H^{(-1)}(v) := \langle v | 1 \rangle  \, , \qquad
 \mathcal H^{(0)}(v) :=  \frac{1}{2} \langle v | v \rangle  
\end{equation}
are integrals of the above solutions of \eqref{BO}.
Here $\langle \cdot  \, |  \, \cdot \rangle $ denotes the $L^2-$inner product, 
\begin{equation}\label{L2 inner product}
\langle f | g \rangle =  \frac{1}{2\pi} \int_0^{2\pi} f \overline g dx \, .
\end{equation}
In particular it follows that for any $c \in \R$ 
and any $s \ge 3/2$,
the affine space $H^s_{r,c}$ is left invariant by $\mathcal S$
where for any $\sigma \in \R$
\begin{equation}\label{affine spaces}
H^\sigma_{r,c} :=  
\{ w \in H^\sigma_r \, : \, \langle w | 1 \rangle =c \}\, .
\end{equation}

\noindent
In the sequel, further progress has been made 
on the well-posedness of \eqref{BO} 
on Sobolev spaces of low regularity. 
The best results so far in this direction
were obtained by Molinet  by using the gauge transformation introduced by Tao \cite{Tao}. 
Molinet's results in \cite{Mol} ( cf. also \cite{MP}) imply that the solution 
map $\mathcal S,$ introduced in $(S2)$ above, 
continuously extends to any
Sobolev space $H^s_r$ with $0 \le s \le 3/2$. 
More precisely, for any such $s$,
$\mathcal S: H^s_r \to C(\R, H^s_r)$ is continuous
and for any $v_0 \in H^s_r$, $ \mathcal S(t, v_0)$ satisfies equation \eqref{BO}  
in $H^{s-2}_r$. The fact that $\mathcal S$ continuously
extends to $L^2_{r} \equiv H^0_{r}$, 
$\mathcal S : L^2_{r} \to C(\R, L^2_{r})$,
 can also be deduced by methods
recently developed in \cite{GK}. Furthermore, one infers from \cite{GK}
that any solution  $ \mathcal S(t, v_0)$ with initial data $v_0 \in L^2_r$
can be approximated in $C(\R, L^2_r)$ by  solutions of \eqref{BO} which are rational functions of $\cos x$, $\sin x$.
We refer to these solutions as rational solutions.

In this paper we show that the BO equation is well-posed in the Sobolev space
$H^{-s}_r$ for any $0 < s < 1/2$ and that this result is sharp. 
Since the nonlinear term $\partial_x v^2$ in equation \eqref{BO} is not 
well-defined for elements in $H^{-s}_r$, we first need to define what
we mean by a solution of \eqref{BO} in such a space.

\begin{definition}\label{def solution}
Let $s \ge 0$.
A continuous curve $\gamma: \R \to H^{-s}_r$ 
with $\gamma(0) = v_0$ for a given $v_0 \in H^{-s}_r$,
is called a global in time solution of the BO equation in 
$H^{-s}_r$ with initial data $v_0$ if for any
sequence $(v_0^{(k)})_{k \ge 1}$ in $H^\sigma_r$ with $\sigma > 3/2,$
which converges to $v_0$ in $H^{-s}_r$, the corresponding
sequence of classical solutions $\mathcal S(\cdot, v_0^{(k)})$
converges to $\gamma$ in $C(\R, H^{-s}_{r})$.
The solution $\gamma$ is denoted by 
$\mathcal S(\cdot, v_0)$.
\end{definition}
\noindent
We remark that for any $v_0 \in L^2_r$, the solution
$\mathcal S(\cdot, v_0)$ in the sense of Definition \ref{def solution}
coincides with the solution obtained by Molinet in \cite{Mol}.
\begin{definition}\label{def well-posedness}
Let $s \ge 0$.
Equation \eqref{BO} is said to be globally $C^0-$well-posed
in $H^{-s}_r$ if the following holds:
\begin{itemize}
\item[(i)]
For any $v_0 \in H^{-s}_r$, there exists
a global in time solution of \eqref{BO} 
with initial data $v_0$
in the sense of Definition \ref{def solution}.
\item[(ii)]
The solution map $\mathcal S : H^{-s}_r \to C(\R, H^{-s}_r)$
is continuous, i.e. satisfies (S2).
\end{itemize}
\end{definition}
 Our main results are the following ones:
\begin{theorem}\label{Theorem 1}
For any $0 \le s < 1/2$,  the Benjamin-Ono equation is globally $C^0-$well-posed
on $H^{-s}_r$ in the sense of Definition \ref{def well-posedness}.  For any $c \in \R$,  $t\in \R $,  the flow map $ S^t=\mathcal S(t, \cdot)$  leaves
the affine space $H^{-s}_{r,c}$,  introduced in \eqref{affine spaces}, invariant.\\
Furthermore, there exists a conservation law
$ I_{-s} : H^{-s}_r \to \R_{\ge 0}$ of \eqref{BO} 
satisfying
$$
\| v \|_{-s} \le I_{-s}(v)\, , \quad \forall v \in H^{-s}_r \, .
$$
In particular, one has
$$
\sup_{t \in \R} \|\mathcal S(t, v_0)\|_{-s}
\le I_{-s} (v_0)\ , \quad \forall v_0 \in H^{-s}_r\, .
$$
\end{theorem}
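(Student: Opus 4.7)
The plan is to build the theory around the nonlinear Fourier transform (NLFT) $\Phi$ for the Benjamin--Ono equation, which the abstract advertises as a key ingredient. On $L^2_r$ the map $\Phi$ is, by the results of \cite{GK}, a homeomorphism onto a weighted $\ell^2$-type sequence space, and it conjugates the BO flow to an explicit dynamics that is linear in the angle variables with frequencies determined by the action variables. The first step is to extend $\Phi$ continuously to $H^{-s}_r$ for every $s \in [0, 1/2)$ and to identify the image precisely as a weighted sequence space whose norm encodes the $H^{-s}$-regularity of the preimage.

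With such an extension in hand, I would define the conservation law $I_{-s}$ as the norm of $\Phi(v)$ in that target sequence space. Its invariance under the BO flow is automatic, since $\Phi$ linearizes the flow and only the angles evolve. The bound $\| v \|_{-s} \le I_{-s}(v)$ is then to be deduced from a structural comparison of $\Phi$ with the linear Fourier transform: one expects $\Phi$ to be triangular with respect to the linear Fourier basis up to a lower-order correction, so that a Bessel-type inequality on the Birkhoff coordinates controls the Sobolev norm of $v$.

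Granting these two ingredients, the proof of the theorem would proceed by approximation. Given $v_0 \in H^{-s}_r$, pick a sequence $v_0^{(k)} \in H^\sigma_r$, $\sigma > 3/2$, with $v_0^{(k)} \to v_0$ in $H^{-s}_r$, and consider the classical solutions $v^{(k)}(t) := \mathcal S(t, v_0^{(k)})$. In NLFT coordinates each $\Phi(v^{(k)}(t))$ is given by the explicit linear-in-angles flow starting from $\Phi(v_0^{(k)})$, and by the continuous extension of $\Phi$ the data $\Phi(v_0^{(k)})$ converge to $\Phi(v_0)$ in the target sequence space. The conservation law $I_{-s}(v^{(k)}(t)) = I_{-s}(v_0^{(k)})$ provides a uniform-in-$t$ bound on $\| v^{(k)}(t) \|_{-s}$, and the continuity of $\Phi^{-1}$ upgrades pointwise convergence in the sequence space to uniform-in-$t$ convergence in $H^{-s}_r$. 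The resulting limit curve is the desired global solution $\mathcal S(\cdot, v_0)$, the bound passes to $I_{-s}(v_0)$ by continuity of $I_{-s}$, and the continuity of $\mathcal S$ on $H^{-s}_r$ follows from the same commutative diagram. Invariance of the affine spaces $H^{-s}_{r,c}$ reduces to the conservation of $\mathcal H^{(-1)}(v) = \langle v | 1 \rangle$ for smooth solutions, which descends to the limit since the pairing with the constant function $1$ is continuous on $H^{-s}_r$.

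The main obstacle I anticipate is the analytic extension of $\Phi$ and of $\Phi^{-1}$ to $H^{-s}_r$ with $0 < s < 1/2$, together with the sharp spectral analysis of the Lax operator of BO needed to match the target sequence norm with $\|\cdot\|_{-s}$ and to secure the comparison $\| v \|_{-s} \le I_{-s}(v)$. This is the step where the threshold $s = -1/2$ enters naturally, reflecting the summability properties of the action variables, and it is also the place where one must verify that the low-regularity solutions thus constructed coincide with those of \cite{Mol} on the overlapping range $s \ge 0$, so that Definition \ref{def solution} is not vacuous and the approximation scheme is consistent with the existing well-posedness theory.
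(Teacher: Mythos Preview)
Your plan is essentially the paper's own strategy: extend the Birkhoff map $\Phi$ of \cite{GK} to a homeomorphism $H^{-s}_{r,0}\to h^{1/2-s}_+$, note that in Birkhoff coordinates the flow is the explicit rotation $\zeta_n(0)\mapsto \zeta_n(0)e^{it\omega_n}$, and pull back continuity of this flow via $\Phi^{-1}$; the general affine space $H^{-s}_{r,c}$ is handled by the translation $v\mapsto v-[v]$ and the Galilean shift \eqref{formula with c}. Two small corrections are in order. First, $\Phi$ is only defined on the mean-zero slice $H^{-s}_{r,0}$, not on all of $H^{-s}_r$; the mean has to be split off by hand. Second, the conservation law is not the raw norm $\|\Phi(v-[v])\|_{1/2-s}$: the paper only obtains the nonlinear bound $\|u\|_{-s}\le F_s(\|\Phi(u)\|_{1/2-s})$ for an increasing function $F_s$, so one sets $I_{-s}(v)=F_s(\|\Phi(v-[v])\|_{1/2-s})+|[v]|$. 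The ``triangular up to lower order / Bessel-type'' heuristic you invoke does not hold here in a form strong enough to give a linear comparison; the actual argument (Proposition~\ref{extension Phi, part 2}(i)) goes through the spectral calculus of $L_u$ and produces estimates with factors like $(1-\lambda_0)^2 e^{-\lambda_0}$, which is why only a nonlinear $F_s$ is claimed.
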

\begin{remark}
$(i)$ Theorem \ref{Theorem 1} continues to hold on $H^{s}_r$ for any $s> 0$. See Corollary \ref{results for s positive} in Appendix \ref{restrictions}.\\
$(ii)$ Since by \eqref{CL}, the $L^2-$norm is an integral of \eqref{BO}, $\textcolor{red}{I_{-s}}$ in the case $s=0$
can be chosen as $I_0(v):= \| v \|_0^2$. The
definition of $I_{-s}$ for $0 < s < 1/2$ can be found in Remark \ref{def integral I_s} in  Section \ref{Birkhoff map}.
These novel integrals are one of the key ingredients for the proof of global $C^0-$well-posedness of \eqref{BO} in $H^{-s}_r$ for $0 < s < 1/2$.
\\
$(iii)$ Note that global $C^0$--well-posedness implies the group property $S^{t_1}\circ S^{t_2}=S^{t_1+t_2}$. Consequently, $S^t$ is a homeomorphism of $H^{-s}_r$. \\
$(iv)$ By Rellich's compactness theorem, $S^t $ is also  weakly sequentially continuous on $H^{-s}_{r, c}$ for any $0 \le s < 1/2$ and $c \in \mathbb R$, hence in particular on $L^2_{r,0}$. 
Note that this  contradicts a result stated in \cite[Theorem 1.1]{Mol1}. Very recently, however, 
an error in the proof of the latter theorem has been found,
leading to the withdrawal of the paper (cf. arXiv:0811.0505). 
A proof of this weak continuity property was indeed the starting point 
of the present paper.
\end{remark}
The next result says that the well-posedness result of Theorem \ref{Theorem 1} is sharp.
\begin{theorem}\label{Theorem 2}
For any $c \in \mathbb R,$ the Benjamin-Ono equation is ill-posed on $H^{-1/2}_{r,c}$.
More precisely,
there exists a sequence $(u^{(k)})_{k \ge 1}$  in $\bigcap_{n \ge 1}H^{n}_{r,0}$, converging strongly to $0$ in $H^{-1/2}_{r,0}$,
so that  for any $c \in \mathbb R$, the solutions  $\mathcal S(t, u^{(k)} + c)$ of \eqref{BO}
of average $c$ have 
the property that the sequence of functions $t\mapsto \la \mathcal S(t, u^{(k)} + c) \vert \ {\rm e}^{ix} \ra $ 
does not converge pointwise to $0$ on any given time interval of positive length. 
\end{theorem}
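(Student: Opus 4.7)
The approach is to construct the sequence $u^{(k)}$ using the Birkhoff map (nonlinear Fourier transform) announced in the introduction, taking $u^{(k)}$ to be a two-gap Benjamin--Ono profile whose nontrivial Birkhoff actions sit at two consecutive high indices. The underlying reason such a sequence must exist is that at $s=1/2$ the family of conservation laws $I_{-s}$ of Theorem \ref{Theorem 1} breaks down, so the $H^{-1/2}$-norm of a solution cannot control the finer Birkhoff data uniformly.

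More precisely, let $\Phi$ denote the Birkhoff map on $H^s_{r,0}$ and $(\zeta_n)_{n \ge 1}$ the Birkhoff coordinates. Fix amplitudes $A_k > 0$ (to be tuned) and set $\zeta_k^{(k)} = \zeta_{k+1}^{(k)} = \sqrt{A_k}$ with all other components zero. Define $u^{(k)} := \Phi^{-1}(\zeta^{(k)})$. Since $\zeta^{(k)}$ has finite support, $u^{(k)}$ is a finite-gap (hence rational) potential, so $u^{(k)} \in \bigcap_{n \ge 1} H^n_{r,0}$. The weighted $\ell^2$-description of $H^s$ in Birkhoff coordinates yields
\begin{equation*}
\|u^{(k)}\|_{H^{-1/2}}^2 \asymp A_k \bigl((1+k^2)^{-1/2} + (1+(k+1)^2)^{-1/2}\bigr),
\end{equation*}
so a choice such as $A_k = k^{1/2}$ gives $u^{(k)}\to 0$ strongly in $H^{-1/2}_{r,0}$.

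Under the BO flow the Birkhoff coordinates only rotate, $\zeta_n(t) = \zeta_n(0) e^{-i\omega_n t}$ for explicit frequencies $\omega_n$. Expanding the first Fourier coefficient of $v=\Phi^{-1}(\zeta)$ as a power series in $\zeta$ and retaining only monomials of net Fourier frequency $+1$, the simplest contribution available from a two-gap profile at $(k,k+1)$ is $c_k \, \zeta_{k+1}\overline{\zeta_k}$, corresponding to $(k+1)-k=1$. This gives
\begin{equation*}
\la \mathcal S(t, u^{(k)}) \mid e^{ix}\ra \,=\, c_k A_k \, e^{i(\omega_k - \omega_{k+1})t} + R_k(t),
\end{equation*}
where $R_k(t)$ collects higher-order monomial contributions. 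Tuning $A_k$ against the $k$-dependence of $c_k$ so that $|c_k A_k|$ stays bounded below while $A_k(1+k^2)^{-1/2}\to 0$, and showing that $R_k(t)\to 0$ uniformly in $t$, one obtains a sequence of continuous functions of $t$ with positive liminf of modulus at every point, which precludes pointwise convergence to $0$ on any interval of positive length. The case of arbitrary mean $c\in\R$ then follows from the Galilean invariance $\mathcal S(t, u+c)(x)=\mathcal S(t, u)(x-2ct)+c$, which only multiplies the first Fourier coefficient by the unimodular factor $e^{-2ict}$ and hence preserves its modulus.

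The main obstacle is the quantitative analysis of the inverse Birkhoff map. One needs (i) the explicit coefficient $c_k$ of $\zeta_{k+1}\overline{\zeta_k}$ in the expansion of $\hat v(1)$, together with a lower bound on $|c_k|$ excluding accidental cancellations, (ii) a compatible choice of $A_k$ such that both $A_k(1+k^2)^{-1/2}\to 0$ and $|c_k A_k|\ge\delta>0$ hold, and (iii) a uniform-in-$t$ estimate on the higher-order Birkhoff contributions $R_k(t)$, which in turn relies on the quantitative mapping properties of the Birkhoff map developed in the body of the paper. Once these algebraic and analytic pieces are in place, the pointwise non-convergence on any interval is essentially free from the explicit quasi-periodic form above.
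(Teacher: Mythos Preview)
Your proposal has a genuine gap at its first and most essential step: the claimed equivalence
\[
\|u^{(k)}\|_{H^{-1/2}}^2 \asymp A_k\bigl((1+k^2)^{-1/2}+(1+(k+1)^2)^{-1/2}\bigr)
\]
is not justified, and in fact cannot hold. The paper establishes that $\Phi: H^{-s}_{r,0}\to h^{1/2-s}_+$ is a homeomorphism only for $0\le s<1/2$; at $s=1/2$ the Birkhoff map does \emph{not} extend continuously to a map $H^{-1/2}_{r,0}\to h^0_+$ (this is Corollary~\ref{no extension for s=-1/2}, which is a consequence of the very theorem you are trying to prove). Even with the correct formal target space $h^0_+$ (your weights are off by a half-power of $n$), no two-sided estimate of $\|u\|_{-1/2}$ in terms of $\|\zeta\|_{h^0}$ is available. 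Concretely, the paper's actual counterexample has $\|u^{(k)}\|_{-1/2}\to 0$ while $\gamma_1(u^{(k)})=|\zeta_1(u^{(k)})|^2\to\infty$, hence $\|\Phi(u^{(k)})\|_{h^0}\to\infty$; this explicitly rules out the norm comparison your argument relies on. In short, you are assuming precisely the structure whose failure the theorem asserts.

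The paper proceeds in the opposite direction: it starts from an \emph{explicit} one-parameter family of rational potentials
\[
u(x)=2\,\mathrm{Re}\Bigl(\frac{\e q e^{ix}}{1-qe^{ix}}\Bigr),\qquad 0<\e<q<1,
\]
for which $\|u\|_{-1/2}^2=-2\e^2\log(1-q^2)$ is computed directly from the Fourier series (no Birkhoff map needed). The eigenvalue problem for $L_u$ becomes an explicit first-order ODE in the disc variable, and a careful analysis of the resulting transcendental equation $F(\mu,\e,q)=0$ shows that with $q_k\to 1$, $\e_k\to 0$ chosen so that $\e_k\log(1-q_k^2)\to-\infty$ yet $\e_k^2\log(1-q_k^2)\to 0$, one has $\lambda_0(u^{(k)})\to-\infty$ while $\|u^{(k)}\|_{-1/2}\to 0$. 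Thus the first gap $\gamma_1(u^{(k)})\sim -\lambda_0(u^{(k)})\to\infty$ dominates, while $\sum_{n\ge 2}\gamma_n(u^{(k)})\le 1$ stays bounded. Only \emph{after} this spectral information is in hand does the paper invoke the exact formula $\widehat u(1)=-\sum_{n\ge 0}\sqrt{\mu_{n+1}\kappa_n/\kappa_{n+1}}\,\zeta_{n+1}\overline{\zeta_n}$ (not a power-series truncation but an identity with action-dependent coefficients), compute the asymptotics of the coefficients, and derive the contradiction by integrating $\xi_k(t)e^{-it(1+2\lambda_0(u^{(k)}))}$ over the interval. The construction is thus potential-side, not Birkhoff-side; the inverse spectral step you propose as the starting point is exactly what the paper avoids because it is uncontrolled at the critical regularity.
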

\begin{remark}\label{illposedness}  
It was observed in \cite{AH} that
the solution map $\mathcal S$ does not continuously extend
to $H^{-s}_{r}$ with $s > 1/2$. More precisely, for any $c \in \R$,
the authors of \cite{AH} construct a sequence
$(v_0^{(k)})_{k \ge 1}$ in $\bigcap_{n \ge 0} H^n_{r,c}$ of initial data
so that for any $s > 1/2$ it converges to an element $v_0$ in $H^{-s}_{r,c}$
whereas for any $t \ne 0$,  $(\mathcal S(t, v_0^{(k)}))_{k \ge 1}$ diverges even in the sense of distributions. 
However, the divergence of  $\mathcal S(t, v_0^{(k)})$ can be removed by
renormalizing the flow by a translation of the space variable, $x \mapsto x+ \eta_kt$. In the case $c=0$, $\eta_k$ is given 
by $\|v_0^{(k)}\|_0^2$.  We refer to \cite{C} for a similar renormalization in the context of the nonlinear Schr\"odinger equation. 
In Appendix \ref{illposedness for s < -1/2}, we construct a sequence of initial data in $\bigcap_{n \ge 0} H^n_{r,c}$ 
with the above convergence/divergence properties, but where such a renormalization is {\em not} possible.
\end{remark}

\smallskip
\noindent
{\bf Comments on Theorem \ref{Theorem 1} and Theorem \ref{Theorem 2}.} 
$(i)$ A straightforward computation shows that $s_c = - 1/2$ is
the critical Sobolev exponent of the Benjamin-Ono equation.
Hence Theorem \ref{Theorem 1} and Theorem \ref{Theorem 2} say
 that the threshold of well-posedness of \eqref{BO}  is 
given by the critical Sobolev exponent $s_c$.\\
$(ii)$  In a recent, very interesting paper \cite{Tal}, 
Talbut proved by the method of perturbation determinants,
developed for the KdV and the NLS equations by Killip, Visan, and Zhang in \cite{KVZ},
that for any $0 < s < 1/2,$ there exists a constant 
$C_s>0$, only depending on $s$,
so that any sufficiently smooth solution $t \to v(t)$ 
of \eqref{BO} satisfies the estimate
$$
\sup_{t\in \R}\| v(t)\|_{-s}\leq  C_s 
\big( 1 + \|v(0)\|_{-s}^{\frac{2}{1-2s}} \big)^s
\|v(0)\|_{-s} \, .
$$
We note that the integrals $I_{-s}$ of Theorem \ref{Theorem 1}$(iv)$
are of a different nature. Let us explain this in more detail.
Our method for proving that the solution map $\mathcal S$ of \eqref{BO} 
continuously extends to $H^{-s}_r$ for any $0 < s < 1/2$
consists in constructing a globally defined nonlinear Fourier transform $\Phi$,
also referred to as Birkhoff map (cf. Section \ref{Birkhoff map}). It means 
that \eqref{BO} can be solved by quadrature, when expressed in the coordinates
defined by $\Phi$, which we refer to as Birkhoff coordinates.
The integrals $I_{-s}$ of Theorem \ref{Theorem 1}$(iv)$ are taylored to show 
that $\Phi: H^{-s}_{r,0} \to h^{1/2 -s}_+$ is onto  (cf. Theorem \ref{extension Phi}). 
Actually, the map $\Phi$ is a key ingredient not only in the proof of Theorem \ref{Theorem 1},
but in the proof of all results, stated in Section \ref{Introduction}.
In particular, with regard to Theorem \ref{Theorem 2}, we note that
the standard norm inflation argument, pioneered by  \cite{CCT}
 (cf. also \cite[Appendix A]{HKV} and references therein),
does not apply for proving ill-posedness of \eqref{BO} in $H^{-1/2}_{r,c}$
since the mean $\langle u | 1 \rangle$ is an integral of \eqref{BO}.
Our proof of Theorem \ref{Theorem 2} is based in a fundamental way on the map $\Phi$
and its properties (cf. Section \ref{Proof of Theorem 2}).\\
$(iii)$ Using a probabilistic approach developed by Tzvetkov and Visciglia \cite{TV}, Y. Deng \cite{D} 
proved well-posedness result for the BO equation on the torus for almost every data with respect to a measure which is supported by $\bigcap_{\e > 0}H^{-\e}_r$ 
and for which $L^2_r$ is of measure $0$. Our result provides a deterministic framework for these solutions.

\smallskip

 One of the key ingredients of our proof of Theorem \ref{Theorem 1} 
 are explicit formulas for the frequencies of the Benjamin-Ono equation, defined by
 \eqref{frequencies in Birkhoff 0} below, which describe the time evolution of solutions of \eqref{BO}
 when expressed in Birkhoff coordinates.
 They are not only used to prove the global well-posedness results for \eqref{BO},
 but at the same time allow to obtain the following qualitative properties of solutions of \eqref{BO}.
  
\begin{theorem}\label{Theorem 3}
For any $v_0 \in H^{-s}_{r, c}$ with $0 < s < 1/2$ and $c \in \R$,
the solution $ \mathcal S(t, v_0)$
has the following properties:\\
$(i)$ The orbit $\{ \mathcal S(t, v_0) \, : \, t \in \R \}$
is relatively compact in $H^{-s}_{r, c}$.\\
$(ii)$ The solution $t \mapsto \mathcal S(t, v_0)$
is almost periodic in $H^{-s}_{r, c}$.
\end{theorem}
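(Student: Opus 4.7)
The plan is to work in Birkhoff coordinates via the nonlinear Fourier transform $\Phi$ announced in the comments after Theorem~\ref{Theorem 1}. Since the flow map preserves the mean and the affine space $H^{-s}_{r,c}$, it suffices to treat the mean-zero component: write $v_0 = u_0 + c$ with $u_0 \in H^{-s}_{r,0}$ and set $\zeta(0) := \Phi(u_0) \in h^{1/2-s}_+$. In these coordinates the BO flow becomes the linear flow $\zeta_n(t) = \zeta_n(0)\,e^{i\omega_n(v_0) t}$ with frequencies $\omega_n(v_0)$ depending only on the initial datum (so they are fixed along the orbit). In particular each coordinate $|\zeta_n(t)|$ is constant in time, hence the $h^{1/2-s}_+$-norm is conserved along the flow and is equal to $\|\zeta(0)\|_{h^{1/2-s}_+}$.

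For part $(i)$, I would prove that the orbit $\{\zeta(t):t\in\R\}$ is relatively compact in $h^{1/2-s}_+$. Given $\varepsilon>0$, pick $N$ so large that
\[
\sum_{n>N} n^{1-2s}|\zeta_n(0)|^{2} < \varepsilon^{2}.
\]
This tail bound holds for every $t$ because the moduli $|\zeta_n(t)|$ do not depend on $t$. The truncated orbit $\{(\zeta_1(t),\dots,\zeta_N(t)):t\in\R\}$ lies in the compact finite-dimensional torus $\prod_{n=1}^N\{|z|=|\zeta_n(0)|\}$. A standard $\varepsilon$-net argument then shows that the full orbit is totally bounded in $h^{1/2-s}_+$, hence relatively compact. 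Pulling back via the homeomorphism $\Phi^{-1}:h^{1/2-s}_+\to H^{-s}_{r,0}$ and translating by the constant $c$ gives relative compactness of $\{\mathcal S(t,v_0):t\in\R\}$ in $H^{-s}_{r,c}$.

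For part $(ii)$, I would show that $t\mapsto\zeta(t)$ is almost periodic from $\R$ to $h^{1/2-s}_+$ in the sense of Bohr. For each $N$, the truncated map
\[
t\mapsto \bigl(\zeta_1(0)e^{i\omega_1 t},\dots,\zeta_N(0)e^{i\omega_N t},0,0,\dots\bigr)
\]
is a finite trigonometric polynomial, hence Bohr almost periodic. The uniform tail estimate from step two gives that these truncations converge uniformly in $t\in\R$ to $\zeta(t)$ in the $h^{1/2-s}_+$-norm. Since a uniform limit of Bohr almost periodic functions with values in a Banach space is Bohr almost periodic, $t\mapsto \zeta(t)$ is almost periodic. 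Applying the continuous map $\Phi^{-1}$ (and adding the constant $c$) preserves almost periodicity, since the continuous image of a Bohr almost periodic function with relatively compact range is again almost periodic; this yields $(ii)$.

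The main obstacle is essentially already packaged into the properties of $\Phi$ that are established in Section~\ref{Birkhoff map}, namely that $\Phi:H^{-s}_{r,0}\to h^{1/2-s}_+$ is a homeomorphism conjugating the BO flow to the explicit linear flow $\zeta_n(t)=\zeta_n(0)e^{i\omega_n t}$, with frequencies well-defined at this regularity. Granted this, the proof reduces to two soft facts about linear flows on weighted $\ell^2$-spaces of sequences on a product of circles: uniform tail control delivers both compactness of the orbit and uniform approximation by finite exponential sums, from which Bohr almost periodicity follows. The only delicate point would be justifying that the frequencies $\omega_n(v_0)$ are finite real numbers for $v_0\in H^{-s}_{r,c}$ and that the conjugation of the flow extends to this regularity — but these are exactly the analytic inputs already developed in the preceding sections to prove Theorem~\ref{Theorem 1}.
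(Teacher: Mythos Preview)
Your proposal is correct and follows essentially the same route as the paper: reduce to mean zero, pass to Birkhoff coordinates via the homeomorphism $\Phi:H^{-s}_{r,0}\to h^{1/2-s}_+$, observe that the orbit sits on the compact torus $\mathrm{Tor}(\zeta(0))=\{z\in h^{1/2-s}_+:|z_n|=|\zeta_n(0)|\}$, and transfer back via $\Phi^{-1}$. The only minor variation is in part~(ii): the paper invokes Bochner's characterization (extracting convergent subsequences of translates $f_{\tau_k}$ via a diagonal argument on the phases $e^{i\omega_n\tau_k}$), whereas you use Bohr's approximation theorem (uniform approximation by finite trigonometric polynomials). Both are standard and equivalent, and your appeal to uniform continuity of $\Phi^{-1}$ on the compact closure of the orbit to push almost periodicity back to $H^{-s}_{r,c}$ is legitimate.
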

\begin{remark}
Theorem \ref{Theorem 3} continues to hold for any initial data in $H^s_{r, c}$ with $s > 0$ arbitrary.
See Corollary \ref{results for s positive} in Appendix \ref{restrictions}. 
For $s=0,$ results corresponding to the ones of Theorem \ref{Theorem 3} have been obtained in \cite{GK}.
\end{remark}

In \cite{AT}, Amick and Toland characterized the traveling wave solutions
of \eqref{BO}, originally found by Benjamin \cite{Benj} . It was shown in \cite[Appendix B]{GK} that they coincide
with the so called one gap solutions, described explicitly in \cite{GK}.
Note that one gap potentials are rational solutions of \eqref{BO} and
evolve in $\bigcap_{n \ge 1}H^{n}_{r,0}$.
In \cite[Section 5.1]{AN} Angulo Pava and Natali proved that every travelling
wave solution of \eqref{BO} is orbitally stable in $H^{1/2}_r$. Our newly developed methods
allow to complement their result as follows:
\begin{theorem}\label{Theorem 4}
Every traveling wave solution of the BO equation
is orbitally stable in $H^{-s}_r$ for any $0 \le s < 1/2$. 
\end{theorem}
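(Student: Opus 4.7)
The plan is to reduce orbital stability to a direct observation in Birkhoff coordinates, using that traveling waves coincide with one-gap potentials. By \cite[Appendix B]{GK}, a traveling wave $v^*$ of mean $c := \langle v^* | 1 \rangle$ is mapped, under the Birkhoff map $\Phi : H^{-s}_{r,c} \to h^{1/2-s}_+$ of Section \ref{Birkhoff map}, to a sequence $\zeta^* \in h^{1/2-s}_+$ with exactly one nonzero component, say $\zeta^*_n \ne 0$ while $\zeta^*_k = 0$ for $k \ne n$. Since the spatial translation $\tau_y v(x) := v(x-y)$ acts on Birkhoff coordinates by $\zeta_k \mapsto e^{-iky}\zeta_k$, the translation orbit $\{ \tau_y v^* : y \in \R \}$ corresponds via $\Phi$ to the circle
\[
\mathcal{C} := \big\{ \zeta \in h^{1/2-s}_+ : |\zeta_n| = \rho, \ \zeta_k = 0 \ \text{for all}\ k \ne n \big\}, \qquad \rho := |\zeta^*_n|.
\]

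Expressed in Birkhoff coordinates, the BO flow reads $\zeta_k(t) = e^{-it\omega_k(\zeta(0))}\zeta_k(0)$, where the frequencies $\omega_k$ depend only on the conserved actions $|\zeta_j(0)|^2$. Hence each modulus $|\zeta_k(t)| = |\zeta_k(0)|$ is preserved in time. For an arbitrary initial datum $v_0 \in H^{-s}_{r,c}$ with $\zeta(0) := \Phi(v_0)$, choosing at each time the point of $\mathcal{C}$ whose $n$-th coordinate has the same argument as $\zeta_n(t)$ yields
\[
\mathrm{dist}_{h^{1/2-s}_+}\!\big(\zeta(t), \mathcal{C}\big)^2 \ \le \ w_n^2 \big( |\zeta_n(0)| - \rho \big)^2 + \sum_{k \ne n} w_k^2 |\zeta_k(0)|^2,
\]
where $w_k$ denotes the weight defining the $h^{1/2-s}_+$ norm. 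The right-hand side is independent of $t$ and tends to $0$ as $\zeta(0) \to \zeta^*$.

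The transfer back to $H^{-s}_r$ uses that, by Section \ref{Birkhoff map}, $\Phi$ is a homeomorphism of $H^{-s}_{r,c}$ onto $h^{1/2-s}_+$. Given $\varepsilon > 0$, continuity of $\Phi^{-1}$ at $\zeta^*$ furnishes $\delta_1 > 0$ such that $\|\zeta - \zeta^*\|_{h^{1/2-s}_+} < \delta_1$ implies $\|\Phi^{-1}(\zeta) - v^*\|_{-s} < \varepsilon$. By the translation-equivariance recorded above together with the fact that translations act as isometries on $H^{-s}_r$, the same $\delta_1$ works at every base point of $\mathcal{C}$, so every $\zeta$ within $\delta_1$ of $\mathcal{C}$ is mapped to a point within $\varepsilon$ of the translation orbit of $v^*$. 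Continuity of $\Phi$ at $v^*$ then produces the required $\delta > 0$ in the definition of orbital stability.

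The main subtlety is that the above argument is confined to the affine space $H^{-s}_{r,c}$, whereas orbital stability in $H^{-s}_r$ also allows perturbations $v_0$ of mean $c' := \langle v_0 | 1 \rangle \ne c$. This is handled by the substitution $v = \tilde v + (c'-c)$, which turns \eqref{BO} into the same equation for $\tilde v \in H^{-s}_{r,c}$ with the extra transport term $-2(c'-c)\partial_x \tilde v$; its cumulative effect on $\mathcal S(t, v_0)$ is a time-dependent translation of the $x$-variable that lies within the translation orbit used to measure orbital stability. Combined with $|c'-c| \le \|v_0 - v^*\|_{-s}$, this reduces the general case to the one already treated. The step implicitly uses that $\Phi$ depends continuously on the base mean $c$, a point I expect to be part of the framework set up in Section \ref{Birkhoff map}.
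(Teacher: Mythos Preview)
Your argument is correct and rests on the same ingredients as the paper—the one-gap characterization of traveling waves, conservation of the moduli $|\zeta_k|$ under the flow, and the homeomorphism property of $\Phi$—but the execution differs. The paper argues by contradiction and compactness: given a hypothetical sequence $v^{(k)}(0)\to u_0$ with $\inf_\tau\|v^{(k)}(t_k)-u_0(\cdot+\tau)\|_{-s}\ge\varepsilon$, it uses relative compactness of $\{\Phi(v^{(k)}(t_k))\}$ in $h^{1/2-s}_+$ to pass to a limit $w$, identifies $w$ as a one-gap potential with $\zeta_N(w)=\zeta_N(u_0)e^{i\theta}$, and concludes $w=u_0(\cdot+\theta/N)$, a contradiction. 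Your route is direct: the explicit translation equivariance $\zeta_k\mapsto e^{-iky}\zeta_k$ is an isometry of $h^{1/2-s}_+$, so continuity of $\Phi^{-1}$ at one point of $\mathcal C$ propagates uniformly along $\mathcal C$, and no compactness or subsequence extraction is needed. Your argument is shorter and more quantitative; the paper's avoids an explicit appeal to the equivariance formula (though it uses it implicitly at the very end). One point of care: in this paper $\Phi$ is defined only on $H^{-s}_{r,0}$, so what you write as ``$\Phi:H^{-s}_{r,c}\to h^{1/2-s}_+$'' should be read as $v\mapsto\Phi(v-c)$, and there is no separate ``continuity in the base mean'' to verify—there is a single map on mean-zero potentials. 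For perturbations with mean $c'\ne c$, the clean way to phrase your last paragraph is via~\eqref{formula with c}: $\mathcal S(t,v_0)=\mathcal S_{c'}(t,v_0-c')+c'$, where by~\eqref{frequ for c ne 0}--\eqref{evolution S_{c, B}} the map $\mathcal S_{c'}$ still acts by phase rotations in Birkhoff coordinates, so your distance-to-$\mathcal C$ estimate applies verbatim to $v_0-c'$ versus $v^*-c$, and the residual constant $|c'-c|$ in the $H^{-s}$ distance is bounded by $\|v_0-v^*\|_{-s}$. The paper itself writes out only the case $c=0$ and leaves the rest to ``the same arguments''.
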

\begin{remark}
Theorem \ref{Theorem 4} continues to hold on $H^s_{r}$ for any $s > 0$.
See Corollary \ref{results for s positive} in Appendix \ref{restrictions}. 
\end{remark}

\smallskip

\noindent
{\bf Method of proof.} Let us explain our method for studying low regularity solutions of integrable PDEs / $\Psi$DEs such as the Benjamin-Ono equation, 
in an abstract, informal way.  
Consider an integrable evolution equation (E) of the form $\partial_t u = X_{\mathcal H} (u)$ where $X_{\mathcal H} (u)$ denotes the Hamiltonian
vector field, corresponding to the Hamiltonian $\mathcal H$. In a first step we disregard the equation (E) and 
choose instead a family of Poisson commuting Hamiltonians $\mathcal H_\lambda$, parametrized by $\lambda \in \Lambda$, 
with the property that the Hamiltonian $\mathcal H$ is in the Poisson algebra, generated by the family $(\mathcal H_\lambda)_{\lambda \in \Lambda}$, 
i.e.,  $\{\mathcal H, \mathcal H_\lambda  \} = 0$ for any $\lambda \in \Lambda$. To study low regularity solutions of (E), 
the choice of $\mathcal H_\lambda$, $\lambda \in \Lambda$, has to be made judiciously. Typically, the so called hierarchies, 
often associated with integrable PDEs/$\Psi$DEs are not well suited families. Our strategy is to choose such a family with the help of a Lax pair formulation of (E), 
$\partial_t L = [B, L]$ where $L\equiv L_u$ and $B\equiv B_u$ are  typically differential or pseudo-differential operators acting on Hilbert spaces of functions, 
with symbols depending on $u$, and where $[B, L]$ denotes the commutator of $B$ and $L$.
At least formally, the spectrum of the operator $L$ is conserved by the flow of (E). The goal
is to find a Lax pair $(L, B)$ for (E) with the property that the operator $L$ is well defined for
$u$ of low regularity and then choose functions $\mathcal H_\lambda$, encoding the spectrum of $L$, 
such as the (appropriately regularized) determinant of $L - \lambda$ or a perturbation determinant. We refer to such a function as a generating function.
The key properties of $\mathcal H_\lambda$  to be established are the following ones: 
(i) the flows of the Hamiltonian vector fields $X_{\mathcal H_\lambda}$ are well defined for $u$
of low regularity and can be integrated globally in time; (ii) for $u$ sufficiently regular, $\mathcal H$ can be expressed in terms of the generating function;
(iii) the generating function can be used to construct Birkhoff coordinates
so that the Hamiltonian vector field $X_{\mathcal H}$, when expressed in these coordinates,
extends to spaces of $u$ of low regularity.\\
In the case of the Benjamin-Ono equation, this method is implemented as follows. 
In a first important step we prove that the operator $L_u$ (cf. \eqref{Lax operator}) of the Lax pair
for the Benjamin-Ono equation, found by Nakamura \cite{Nak}, has the property that it is well defined for $u$
in the Sobolev spaces $H^{-s}_{r,0}$, $0 < s < 1/2$. See the paragraph {\em Ideas of the proof of Theorem 5} in Section \ref{Birkhoff map}
for more details. By \eqref{definition generation fucntion} in Section \ref{extension of Phi, part 1},
our choice of the generating function is $\mathcal H_\lambda(u) = \langle (L_u + \lambda)^{-1} 1 | 1 \rangle$
and the Hamiltonian $\mathcal H$ of the Benjamin-Ono equation, when expressed in Birkhoff coordinates,
is given by \eqref{Hamiltonian in Birkhoff}.
The novel conservation laws
of the Benjamin-Ono equation of Theorem \ref{Theorem 1}, $I_{-s} : H^{-s}_r \to \R_{\ge 0}$,
together with the results on the Lax operator $L_u$ for $u$ in $H^{-s}_{r,0}$
are the key ingredients to construct Birkhoff coordinates on $H^{-s}_{r,0}$ for any $0 < s < 1/2$. 
When expressed in these coordinates,  equation \eqref{BO} can be solved by quadrature.

\smallskip

\noindent
{\bf Related work.} 
Results on global well-posedness of the type
stated in Theorem \ref{Theorem 1} have been obtained for other integrable PDEs such as the KdV, the KdV2,
the mKdV, and the defocusing NLS equations. A detailed analysis of the frequencies of these equations
allowed to prove in addition to the well-posedness results qualitative properties of solutions of these equations,
among them properties corresponding to the ones stated in Theorem \ref{Theorem 3} 
 -- see e.g. \cite{KT1},\cite{KT2}, \cite{KM1}, \cite{KM2}. 
 Very recently, sharp global well-posedness results for the cubic NLS, the mKdV equation,
 the KdV equation, and the fifth-order KdV equation  on the real line
 were obtained in \cite{HKV}, \cite{KV}, and, respectively, \cite{BKV}. They are based on
novel integrals constructed in \cite{KVZ}  (cf. also \cite{KT}). 
By the same method, Killip and Visan provide in \cite{KV} alternative proofs 
of the  global well-posedness results for the KdV equation on the torus obtained in \cite{KT1}.
However, to the best of our knowledge, their method does not allow to deduce qualitative properties 
of solutions of the KdV equation on $\T$ such as almost periodicity nor to obtain coordinates
 which can be used to study perturbations of the KdV equation by KAM type methods.
 
\smallskip

\noindent
{\bf Subsequent work.}  
 One of the main novel features of the Benjamin--Ono equation, when compared from the point of view of integrable PDEs with
 the KdV equation or the cubic NLS equation, is that the Lax operator $L_u$ (cf. \eqref{Lax operator}), appearing in the Lax pair formulation of \eqref{BO},
  is {\em nonlocal}. One of the consequences of $L_u$ being nonlocal
 is that the study of the regularity of the Birkhoff map and of its restrictions to the scale of Sobolev spaces $H^s_{r,0}$, $s \ge 0,$
 is quite involved. 
 Further results on the Birkhoff map of the Benjamin-Ono equation in this direction will be reported on in subsequent work.
 
 \smallskip

\noindent
{\bf Organisation.} In Section \ref{Birkhoff map}, we state our results on the extension of  the Birkhoff map  $\Phi$ (cf. Theorem \ref{extension Phi})
 and discuss first applications. All these results are proved 
in Section \ref{extension of Phi, part 1}  and Section \ref{extension of Phi, part 2}.
In Section \ref{mathcal S_B},
we study the solution map $\mathcal S_B$
corresponding to the system of equations,
obtained when expressing \eqref{BO} in Birkhoff coordinates. 
These results are then used
to study the solution map $\mathcal S$ of \eqref{BO}.
In the same section
we also introduce the solution map $\mathcal S_c$ (cf. \eqref{solution map for BO with c}), 
defined in terms of the solution map of the equation \eqref{BO} in the affine
space $H^s_{r,c}$, $c \in \R$, and study the
solution map $\mathcal S_{c, B}$, obtained by
expressing $\mathcal S_c$ in Birkhoff coordinates.
With all these preparations done, we prove Theorem \ref{Theorem 1}, Theorem \ref{Theorem 3}, and Theorem \ref{Theorem 4},
 in Section \ref{Proofs of main results}. The proof of Theorem \ref{Theorem 2}  is presented in Section \ref{Proof of Theorem 2}.
Finally, in Appendix \ref{restrictions}
we study the restriction of the Birkhoff map to the Sobolev spaces $H^s_{r,0}$ with $s > 0$
and discuss applications to the Benjamin-Ono equation, while in Appendix \ref{illposedness for s < -1/2} we discuss
results on ill-posedness of the Benjamin--Ono equation in $H^{-s}_r$ with $s > \frac 12$. 

\smallskip

\noindent
{\bf Notation.}
By and large, we will use the notation established in
\cite{GK}. In particular, 
the $H^s-$norm of an element $v$ in the 
Sobolev space $H^s \equiv H^s(\T, \C)$, $s \in \R$,
will be denoted by $\|v\|_s$. It is defined by 
\begin{equation}\label{Hs norm}
\|v\|_s = 
\big( \sum_{n \in \Z} \langle n \rangle^{2s}
|\widehat v(n)|^2 \big)^{1/2}\, , \quad
 \langle n \rangle = \max\{1, |n|\}\, .
\end{equation}
For $\|v\|_0$, we usually write $\|v\|$. 
By $\langle \cdot | \cdot \rangle$, we will also
denote the extension of the $L^2-$inner product,
introduced in \eqref{L2 inner product}, 
to $ H^{-s}\times H^s$, $s \in \R$, by duality.
By $H_+$ we denote the Hardy space, consisting
of elements $f \in L^2(\T, \C) \equiv H^0$ with
the property that 
$ \widehat f(n) = 0$ for any $n < 0$.
More generally, for any $s \in \R$,  $H^{s}_+$
denotes the subspace of $H^s,$
consisting of elements $f \in H^s$ with the property
that $ \widehat f(n) = 0$ for any $n < 0$.

\smallskip

\noindent
{\bf Previous versions.} A first version of this paper appeared on arXiv in September 2019 and a second one
with additional results in December 2019 -- see \cite{GKT}. In the current version, Section \ref{Proof of Theorem 2} (proof of ill-posedness of \eqref{BO} in $H^{-1/2}_r)$,
Appendix \ref{restrictions} (restriction of the Birkhoff map to $H^s_{r.0}$ and applications), and 
Appendix \ref{illposedness for s < -1/2} (ill-posedness of \eqref{BO} in $H^{-s}_{r}$ for $s> 1/2$) have been added and the introduction has been 
extended. To reflect better the content of the current version,  the title of the paper has been changed.


\section{The Birkhoff map $\Phi$}\label{Birkhoff map}

In this section we present our results on Birkhoff coordinates
which will be a key ingredient of the proofs of
Theorem \ref{Theorem 1} -- Theorem \ref{Theorem 4}. 
We begin by reviewing the results
on Birkhoff coordinates proved in \cite{GK}.
Recall that on appropriate Sobolev spaces,  
\eqref{BO} can be written in Hamiltonian form  
$$
\partial_t u = \partial_x (\nabla \mathcal H (u))\,, \qquad  \mathcal H (u):=  \frac{1}{2\pi}\int_0^{2 \pi} 
\big( \frac{1}{2} 
(|\partial_x|^{1/2} u)^2 - \frac{1}{3} u^3 
\big) dx
$$
where $|\partial_x|^{1/2}$ is the square root of the Fourier multiplier operator $|\partial_x|$ 
given by
$$
|\partial_x| f(x) = \sum_{n \in \Z} |n| \widehat f(n) e^{inx}\,.
$$
Note that the $L^2-$gradient $\nabla \mathcal H$ of $\mathcal H$ can be computed to be $|\partial_x| u - u^2$ and that $\partial_x \nabla \mathcal H$ is the
Hamiltonian vector field
corresponding to the Gardner bracket, defined for any two functionals $F, G : H^0_{r} \to \R$ with sufficiently regular $L^2-$gradients by
$$
 \{F, G \} := \frac{1}{2 \pi} \int_0^{2\pi} (\partial_x \nabla F) \nabla G dx\ .
$$
In \cite{GK}, it is shown that \eqref{BO} admits global Birkhoff coordinates and hence is an integrable $\Psi$DE in the strongest possible sense. 
To state this result in more detail, we first introduce some notation.
For any subset $J \subset \N_0 :=\mathbb Z_{\ge 0}$ 
and any $s \in \mathbb R$, 
$h^s(J) \equiv h^s (J, \mathbb C)$ denotes the weighted $\ell^2-$sequence space
$$
h^s(J) = \{ (z_n)_{n \in J} \subset \mathbb C \, : \, \| (z_n)_{ n \in J} \|_s < \infty  \}
$$
where
$$
\| (z_n)_{ n \in J} \|_s : = \big( \sum_{n \in J} \langle n \rangle^{2s} |z_n|^2 \big)^{1/2} \ , \quad  
\langle n \rangle := \text{ max} \{ 1, |n| \} \, .
$$
By $h^s(J, \R)$, we denote the real subspace of 
$h^s(J, \C)$, consisting of real sequences $(z_n)_{n \in J}$.
In case where $J = \mathbb N := \{ n \in \mathbb Z \, : \, n \ge 1 \}$ we write
$h^s_+$ instead of $h^s(\mathbb N)$.
If $s=0,$ we also write $\ell^2$ instead of $h^0$ and
$\ell^2_+$ instead of $h^0_+$.
In the sequel, we view $h^s_+$ as the $\R-$Hilbert space $h^s(\N, \R) \oplus h^s(\N, \R)$
by identifying a sequence $(z_n)_{n \in \N} \in h^s_+$ with the pair of sequences
$\big( ({\rm Re} \, z_n)_{n \in \N}, ({\rm Im} \, z_n)_{n \in \N} \big)$ in $h^s(\N, \R) \oplus h^s(\N, \R)$.
We recall that  $L^2_r = H^{0}_r$ and $L^2_{r,0} = H^0_{r,0}$.
The following result was proved in \cite{GK}:
\begin{theorem}\label{main result}(\cite[Theorem 1]{GK}) 
There exists a homeomorphism 
$$
\Phi : L^2_{r,0}\to h^{1/2}_+ \,, \, u \mapsto (\zeta_n(u))_{n \ge 1}
$$
so that the following holds:\\
(B1) For any $n \ge 1$, $\zeta_n:L^2_{r,0}\to \C $ is real analytic.\\
(B2) The Poisson brackets between the coordinate functions $\zeta_n$ are well-defined and for any $n, k \ge 1,$
\begin{equation}\label{standard bracket}
\{\zeta_n , \overline{\zeta_k} \} = - i \delta_{nk}\,, \qquad  \{\zeta_n , \zeta_k \} = 0\,.
\end{equation} 
It implies that the functionals $|\zeta_n|^2$, $n \ge 1$,
pairwise Poisson commute,
$$
\{|\zeta_n|^2 , |\zeta_k|^2 \} = 0\, , 
\quad \forall n,k \ge 1 \, .
$$
(B3) On its domain of definition, $\mathcal H \circ \Phi^{-1}$ is a (real analytic) function, which only depends on the actions $|\zeta_n|^2,$ $n \ge 1$. 
As a consequence, for any $n \ge 1$,
$|\zeta_n|^2$ is an integral of $\mathcal H  \circ \Phi^{-1}$,
$\{ \mathcal H \circ \Phi^{-1}, |\zeta_n|^2 \} = 0$.
\\
The coordinates $\zeta_n$, $n \ge 1$, are referred to as complex Birkhoff coordinates and the functionals 
$|\zeta_n|^2$, $n \ge 1$,  as action variables.
\end{theorem}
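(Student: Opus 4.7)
The plan is to construct the Birkhoff map $\Phi$ from the spectral data of Nakamura's Lax operator, following the strategy developed for KdV in \cite{KT1}. For $u \in L^2_{r,0}$ define
\[
L_u := D - T_u
\]
acting on the Hardy space $H_+$, where $D = -i\pa_x$ and $T_u$ is the Toeplitz operator $f \mapsto \Pi(uf)$ with $\Pi$ the Szeg\H{o} projector onto $H_+$. Since $T_u$ is a relatively compact symmetric perturbation of the nonnegative self-adjoint operator $D$, $L_u$ is self-adjoint with compact resolvent, so it has a sequence of simple eigenvalues $\lambda_0(u) < \lambda_1(u) < \cdots$ satisfying $\lambda_n(u) = n + O(1)$, with $L^2$-normalized eigenfunctions $f_n(u) \in H_+$. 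A direct commutator computation yields the Lax identity $\pa_t L_{u(t)} = [B_{u(t)}, L_{u(t)}]$ along BO-solutions, so the $\lambda_n$ are conserved by the flow.

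First I would introduce the gap variables
\[
\gamma_n(u) := \lambda_n(u) - \lambda_{n-1}(u) - 1 \ge 0, \qquad n \ge 1,
\]
which will play the role of the actions $|\zeta_n|^2$. Nonnegativity follows by comparing $L_u$ with $L_0 = D$ through the min-max principle. The angular data are extracted from the complex numbers $\la 1 \, | \, f_n(u) \ra$, after the phase of each $f_n(u)$ has been fixed by normalizing its leading Fourier coefficient. The coordinate $\zeta_n(u)$ is then defined as $\sqrt{\gamma_n(u)}$ times this phase factor times a regularizing product over the remaining spectrum, arranged so that the canonical brackets \eqref{standard bracket} come out correctly.

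For (B1) the key input is Kato--Rellich analytic perturbation theory: since $u \mapsto L_u$ is affine, hence real analytic, from $L^2_{r,0}$ into the space of self-adjoint operators with a fixed form domain, each simple eigenvalue $\lambda_n(u)$ and the associated rank-one eigenprojector $P_n(u)$ depend real analytically on $u$; hence so do $\gamma_n$, $\la 1 \, | \, f_n \ra$, and $\zeta_n$. For (B2) the computation rests on the first-order perturbation formula $\nabla \lambda_n(u) = -|f_n(u)|^2 + \mathrm{const}$. Together with the orthogonality of the $f_n$'s and a direct Gardner-bracket calculation, this gives $\{\lambda_n, \lambda_k\} = 0$; after conjugation by the phase normalization one then obtains $\{\zeta_n, \overline{\zeta_k}\} = -i\delta_{nk}$ and $\{\zeta_n, \zeta_k\} = 0$. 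Equivalently, the Hamiltonian flow of $\lambda_n$ is the isospectral flow $\pa_s L_u = [B_n, L_u]$ for an explicit skew-adjoint $B_n$, which one checks acts as the rotation $\zeta_n \mapsto e^{is} \zeta_n$ and fixes every $\zeta_k$ with $k \ne n$.

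The homeomorphism property is established in three substeps. First, a trace-type identity
\[
\sum_{n \ge 1} n\, \gamma_n(u) = \|u\|_0^2
\]
identifies $\|\Phi(u)\|_{1/2}^2$ with $\|u\|_0^2$, showing that $\Phi$ takes values in $h^{1/2}_+$ and is Lipschitz on bounded sets. Injectivity follows from an inverse-spectral argument: the generating function
\[
\mathcal H_\lambda(u) = \la (L_u + \lambda)^{-1} 1 \, | \, 1 \ra = \sum_{n \ge 0} \frac{|\la 1 \, | \, f_n(u) \ra|^2}{\lambda_n(u) + \lambda}
\]
determines $u$ from the spectral data $\{\lambda_n(u), |\la 1 \, | \, f_n(u)\ra|^2\}$. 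Surjectivity I would prove by first inverting $\Phi$ explicitly on the dense subset of finite-gap (rational) potentials of \cite{GK}, where both $\Phi$ and its inverse are explicit, and then extending $\Phi^{-1}$ to all of $h^{1/2}_+$ by a compactness/continuity argument built on the norm identity above. Finally, (B3) follows from another trace formula expressing $\mathcal H(u)$ as a symmetric function of the $\lambda_n(u)$ alone, hence as a function of the actions.

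The main obstacle will be the surjectivity and the continuity of $\Phi^{-1}$: unlike in the KdV setting, $L_u$ is nonlocal, so inverting the nonlinear Fourier transform requires quantitative control of the explicit inversion on the finite-gap stratum together with sharp two-sided bounds $\|u\|_0 \asymp \|\Phi(u)\|_{1/2}$ that are stable under passage to the limit, so that the constructed inverse extends continuously across the entire sequence space $h^{1/2}_+$.
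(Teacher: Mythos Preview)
This theorem is not proved in the present paper at all: it is quoted verbatim as \cite[Theorem~1]{GK} and used as input. Every argument in the paper (Sections~\ref{extension of Phi, part 1}--\ref{extension of Phi, part 2}) takes the existence of $\Phi$ on $L^2_{r,0}$ for granted and is devoted to \emph{extending} $\Phi$ to $H^{-s}_{r,0}$. So there is no ``paper's own proof'' to compare your proposal against, and your sketch should be read as an outline of the construction in \cite{GK}, some of whose ingredients are indeed recalled here (the Lax operator \eqref{Lax operator}, the gaps \eqref{def nth gap}, the generating function \eqref{definition generation fucntion}, the trace formula \eqref{trace formula}).

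Relative to that construction, your outline is broadly on target but contains one genuine error and a few imprecisions. The error is the justification of $\gamma_n\ge 0$: comparing $L_u$ to $L_0=D$ via min--max controls each $\lambda_n$ individually but says nothing about consecutive gaps. The actual argument (\cite[Proposition~2.1]{GK}, invoked in the paper right before \eqref{def nth gap}) uses the shift $S=$ multiplication by $e^{ix}$: one computes $L_uS-SL_u=S+(\text{rank one})$, so $S$ maps the span of $f_0,\dots,f_{n-1}$ into an $n$-dimensional space on which $L_u\ge \lambda_{n-1}+1$ modulo a rank-one correction, and min--max then gives $\lambda_n\ge\lambda_{n-1}+1$. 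Without this shift structure the inequality is simply not available. Minor points: the trace formula reads $\|u\|^2=2\sum_{n\ge1} n\gamma_n$ (cf.\ \eqref{trace formula}), not $\sum n\gamma_n$; and injectivity in \cite{GK} is obtained not from the generating function but from explicit formulas expressing each Fourier coefficient $\widehat u(k)$ in terms of the $\zeta_n$'s (see the end of the proof of Proposition~\ref{extension Phi, part 1}(ii), which cites \cite[Proposition~4.2]{GK}). Your description of surjectivity via finite-gap approximation and of (B3) via a trace formula for $\mathcal H$ matches \cite{GK}.
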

\begin{remark}\label{RemarkThm1}
$(i)$ When restricted to submanifolds of finite gap potentials (cf. \cite[Definition 2.2 ]{GK}),
the map $\Phi$ is a canonical, real analytic diffeomorphism
onto corresponding Euclidean spaces -- see 
\cite[Theorem 3]{GK} for details.\\
$(ii)$  For any bounded subset $B$ of $L^2_{r,0}$, 
the image  $\Phi(B)$ by $\Phi$ is bounded in $h^{1/2}_+$.
This is a direct consequence of the trace formula, saying that for any $u \in L^2_{r,0}$ (cf. \cite[Proposition 3.1]{GK}), 
\begin{equation}\label{trace formula}
\|u \|^2 = 2 \sum_{n =1}^\infty n |\zeta_n|^2 \ .
\end{equation}  
\end{remark}

Theorem \ref{main result} together with Remark \ref{RemarkThm1}(i) can be used to solve 
the initial value problem of
\eqref{BO} in $L^2_{r,0}$.
Indeed, by approximating a given initial data in $L^2_{r,0}$ by finite gap potentials (cf. \cite[Definition 2.2 ]{GK}),
one concludes from \cite[Theorem 3]{GK} and Theorem \ref{main result} that equation $\eqref{BO}$, 
when expressed in the Birkhoff coordinates $\zeta = (\zeta_n)_{n \ge 1}$, reads
\begin{equation}\label{BO in Birkhoff}
\partial_t \zeta_n = 
\{\mathcal H \circ \Phi^{-1}, \zeta_n  \} =
i \omega_n \zeta_n \, , 
\quad \forall n \ge 1\, ,
\end{equation}
where 
$\omega_n$, $n \ge 1$, are the BO frequencies,
\begin{equation}\label{frequencies in Birkhoff 0}
\omega_n = 
\partial_{|\zeta_n|^2} \mathcal H \circ \Phi^{-1} \, .
\end{equation}
Since the frequencies only depend on the actions $|\zeta_k|^2$,
$k \ge 1$, they are conserved and hence \eqref{BO in Birkhoff}
can be solved by quadrature,
\begin{equation}\label{BOsolution}
\zeta_n(t)=\zeta_n(0)\, 
e^{i\omega_n(\zeta(0)) t}\ ,\quad t\in \R , \quad n \ge 1\, .
\end{equation}
By \cite[Proposition 8.1]{GK}),  
$\mathcal H_B := \mathcal H \circ \Phi^{-1} $
can be computed as
\begin{equation}\label{Hamiltonian in Birkhoff}
\mathcal H_B (\zeta) := \sum_{k=1}^\infty k^2 |\zeta_k|^2 -
\sum_{k=1}^\infty (\sum_{p = k }^\infty |\zeta_p|^2 )^2\, ,
\end{equation}
implying that the frequencies, defined by 
\eqref{frequencies in Birkhoff 0}, are given by
\begin{equation}\label{frequencies in Birkhoff}
\omega_n(\zeta) = n^2 - 
2 \sum_{k=1}^{\infty} \min(n,k) |\zeta_k|^2 \, ,
\quad \forall n \ge 1\, .
\end{equation}
Remarkably, for any $n \ge 1$, $\omega_n$ depends 
{\em linearly} on the actions 
$|\zeta_k|^2$, $k \ge 1$. Furthermore, while the Hamiltonian
$\mathcal H_B$ is defined on $h^{1}_+$, 
the frequencies $\omega_n$, $n \ge 1$, given by
\eqref{frequencies in Birkhoff} for $\zeta \in h^1_+,$
extend to bounded functionals on $\ell^2_+$,
\begin{equation}\label{frequency map}
\omega_n: \ell^2_+ \to \R,\, 
\zeta = (\zeta_k)_{k \ge 1} \mapsto \omega_n(\zeta)\, .
\end{equation}
We will prove 
that the restriction $\mathcal S_0$ 
of the solution map of \eqref{BO}
to $L^2_{r,0}$, when expressed in
Birkhoff coordinates,
$$
\mathcal S_B : h^{1/2}_+ \to C(\R, h^{1/2}_+) \, , \
\zeta(0) \mapsto 
(\zeta_n(0)\, e^{i\omega_n(\zeta(0)) t})_{n \ge 1}
$$
is continuous -- see Proposition \ref{S in Birkhoff continuous} in Section \ref{mathcal S_B}.
By Theorem \ref{main result},
$\Phi: L^2_{r,0} \to h^{1/2}_+$ and
its inverse $\Phi^{-1}: h^{1/2}_+ \to L^2_{r,0}$
are continuous. Since
$$
\mathcal S_0 = \Phi^{-1} \mathcal S_B \Phi\, : \,  
L^2_{r,0} \to C(\R, L^2_{r,0}) \, , \, 
u(0) \mapsto  \Phi^{-1}\mathcal S_B(t, \Phi(u(0)))
$$
it follows that 
$\mathcal S_0 : L^2_{r,0} \to C(\R, L^2_{r,0})$
is continuous as well. We remark that for any
$u(0) \in L^2_{r,0}$, the solution
$t \mapsto \mathcal S(t, u(0))$ 
can be approximated
in $L^2_{r,0}$ by classical solutions of equation \eqref{BO} (cf. Remark \ref{RemarkThm1}$(i)$)
and thus coincides with the solution,
obtained by Molinet in \cite{Mol} (cf. also \cite{MP}). 

Starting point of the proof of Theorem \ref{Theorem 1}
is formula \eqref{flow map BO} in Subsection \ref{solution map Sc}. We will show that it
extends to the Sobolev spaces $H^{-s}_{r,0}$ for
any $0 < s < 1/2$.
A key ingredient to prove Theorem \ref{Theorem 1} is
therefore the following result on the extension 
of the Birkhoff map  $\Phi$ to 
$H^{-s}_{r,0}$ for any $0 < s < 1/2$:
\begin{theorem}\label{extension Phi}
{\sc (Extension of $\Phi$.)}
For any $0 < s < 1/2,$ the map $\Phi$ of 
Theorem \ref{main result} admits an extension, 
also denoted by $\Phi$,
$$\Phi :H^{-s}_{r,0} \rightarrow h^{1/2-s}_+, \,\,  u \mapsto 
\Phi (u):=(\zeta_n(u))_{n\ge 1}\, ,$$
so that the following holds: \\
$(i)$ $\Phi$ is a homeomorphism. \\
$(ii)$ There exists an increasing function 
$F_s : \R_{>0} \to \R_{>0}$ so that 
$$
\|u\|_{-s} \le F_s( \| \Phi(u) \|_{1/2 - s}) 
\, , \qquad \forall u \in H^{-s}_{r,0} \, .
$$
$(iii)$ $\Phi$ and its inverse map
bounded subsets to bounded subsets.
\end{theorem}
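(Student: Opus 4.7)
\textbf{Proof plan for Theorem \ref{extension Phi}.}

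The plan is to construct the extension by density, using $L^2_{r,0}$ as a dense subspace of $H^{-s}_{r,0}$ and relying on two-sided comparisons between the Sobolev norm $\|u\|_{-s}$ and the weighted norm $\|\Phi(u)\|_{1/2-s}$. The pivotal analytic input is the novel conservation law $I_{-s}$ promised in Theorem \ref{Theorem 1}: expressed in Birkhoff coordinates, one expects $I_{-s}\circ\Phi^{-1}(\zeta)$ to be comparable to $\sum_{n\ge 1}\langle n\rangle^{1-2s}|\zeta_n|^2=\|\zeta\|_{1/2-s}^2$, since for $s=0$ this reduces exactly to the trace formula $\|u\|^2=2\sum_{n\ge 1}n|\zeta_n|^2$. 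Concretely, I would first establish, for $u\in L^2_{r,0}$, a pair of estimates of the form
\begin{equation*}
c_s\,\|\Phi(u)\|_{1/2-s}^2\le I_{-s}(u)+\text{l.o.t.}\quad\text{and}\quad \|u\|_{-s}\le F_s\bigl(\|\Phi(u)\|_{1/2-s}\bigr),
\end{equation*}
the second being item $(ii)$ and following once the generating function $\mathcal H_\lambda(u)=\langle (L_u+\lambda)^{-1}1\mid 1\rangle$ has been related to the actions $|\zeta_n|^2$ (as in the Lax/spectral picture sketched in the introduction).

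With these inequalities in hand, I would extend $\Phi$ to $H^{-s}_{r,0}$ by approximation. Given $u\in H^{-s}_{r,0}$, pick a sequence $u^{(k)}\in L^2_{r,0}$ with $u^{(k)}\to u$ in $H^{-s}_{r,0}$; the first estimate, together with the fact that the generating function/Lax operator $L_u$ makes sense for $u\in H^{-s}_{r,0}$, yields that $(\Phi(u^{(k)}))$ is bounded in $h^{1/2-s}_+$ and is Cauchy coordinate-by-coordinate (each $\zeta_n$ is a continuous function of the spectrum of $L_u$). This defines $\Phi(u)\in h^{1/2-s}_+$ independent of the approximating sequence, so we obtain a well-defined extension $\Phi:H^{-s}_{r,0}\to h^{1/2-s}_+$. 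The second estimate then gives item $(ii)$ on the extended map and implies that $\Phi$ sends bounded sets to bounded sets (half of item $(iii)$), while continuity of $\Phi$ on $H^{-s}_{r,0}$ follows from the same two-sided bounds combined with the known continuity on $L^2_{r,0}$.

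For surjectivity and continuity of the inverse, I would argue in reverse. Given $\zeta\in h^{1/2-s}_+$, approximate by $\zeta^{(k)}\in h^{1/2}_+$ (e.g.\ truncate to finitely many nonzero components, which land in the image of the finite-gap potentials). Set $u^{(k)}:=\Phi^{-1}(\zeta^{(k)})\in L^2_{r,0}$. Item $(ii)$ yields $\|u^{(k)}\|_{-s}\le F_s(\|\zeta^{(k)}\|_{1/2-s})$, a uniform bound, and applied to differences $u^{(k)}-u^{(j)}$ (using a quantitative local Lipschitz version of $(ii)$) shows $(u^{(k)})$ is Cauchy in $H^{-s}_{r,0}$. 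The limit $u$ satisfies $\Phi(u)=\zeta$ by the already-established continuity of the extended $\Phi$, giving surjectivity. Injectivity on $H^{-s}_{r,0}$ follows because distinct spectra of $L_u$ correspond to distinct potentials and the spectral data recover $u$ (via the same approximation argument one shows $\Phi(u_1)=\Phi(u_2)$ forces $\|u_1-u_2\|_{-s}=0$). Continuity of $\Phi^{-1}$ and boundedness of the inverse map then follow from item $(ii)$ applied directly.

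The main obstacle will be the quantitative estimate relating $I_{-s}$, the generating function of $L_u$, and the coordinates $\zeta_n$ for $u$ of regularity below $L^2$. This requires verifying that $L_u=D-T_u$ (where $T_u$ is the Toeplitz operator with symbol $u$) remains a self-adjoint operator with suitable resolvent bounds on the Hardy space when $u\in H^{-s}_{r,0}$, $0<s<1/2$, and that its discrete spectrum still furnishes a complete set of action variables whose weighted $\ell^2$-norm is controlled by (and controls) $\|u\|_{-s}$. Everything else---approximation, density, passage to the limit---is then a relatively standard topological argument built on top of Theorem \ref{main result}.
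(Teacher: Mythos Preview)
Your outline has the right architecture (extend $L_u$ to $H^{-s}_{r,0}$, get two-sided norm comparisons, then pass to limits), but two steps would not go through as written.

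First, invoking the conservation law $I_{-s}$ from Theorem~\ref{Theorem 1} as analytic input is circular: in the paper $I_{-s}$ is \emph{defined} a posteriori by $I_{-s}(v):=F_s(\|\Phi(v-[v])\|_{1/2-s})+|[v]|$ (Remark~\ref{def integral I_s}), so it is a consequence of Theorem~\ref{extension Phi}, not an ingredient for it. The forward bound (that $\Phi$ maps bounded subsets of $H^{-s}_{r,0}$ to bounded subsets of $h^{1/2-s}_+$) and the reverse bound $(ii)$ have to be proved directly from the spectral data of $L_u$: the paper does this by showing that $f\mapsto(\langle f\mid f_n\rangle)_{n\ge 0}$ is a linear isomorphism $H^t_+\to h^t(\N_0)$ for $|t|\le 1-s$ (Lemma~\ref{basis on Sobolev scale}), and then, for $(ii)$, by interpolating fractional powers of $L_u-\lambda_0+1$ to estimate $\|\Pi u\|_{-s}$ in terms of $\sum_n n^{1-2s}\gamma_n$ (proof of Proposition~\ref{extension Phi, part 2}$(i)$).

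Second, and more seriously, your passage to the limit relies on a ``quantitative local Lipschitz version of $(ii)$'' applied to differences $u^{(k)}-u^{(j)}$ in order to get Cauchy sequences. No such Lipschitz estimate is available: $\Phi$ is nonlinear, and $(ii)$ only bounds $\|u\|_{-s}$ by a function of $\|\Phi(u)\|_{1/2-s}$, not $\|u-v\|_{-s}$ by $\|\Phi(u)-\Phi(v)\|_{1/2-s}$. The paper avoids this entirely by working in the weak topology: it proves that $\Phi$ and $\Phi^{-1}$ are sequentially weakly continuous (via the generating function $\mathcal H_\lambda$ and Rellich compactness), obtains surjectivity by extracting a weakly convergent subsequence from the bounded sequence $u^{(k)}=\Phi^{-1}(\zeta^{(k)})$, and upgrades to strong continuity by showing that $\Phi$ and $\Phi^{-1}$ send relatively compact sets to relatively compact sets (Lemma~\ref{rel compact subsets} combined with the uniform bounds on $K_{u;t}$). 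That weak-compactness route is what makes the argument close; a direct Cauchy/density argument in the strong topology would require control you do not have.
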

\begin{remark}\label{weakPhi}
$(i)$ The Birkhoff map does not continuously extend to $H^{-1/2}_{r,0}$ -- see Corollary \ref{no extension for s=-1/2}
at the end of Section \ref{Proof of Theorem 2}.\\
$(ii)$ Results, developed in the course of the proof of Theorem \ref{extension Phi} allow to study the restriction of the Birkhoff map
to $H^s_r$ for any $s > 0$. See  Proposition \ref{positive} in Appendix \ref{restrictions} for details. \\
$(iii)$ Items $(i)$ and $(iii)$, combined with the Rellich compactness theorem, imply that for $0 \leq s < \frac 12$,
the map $\Phi :H^{-s}_{r,0} \rightarrow h^{1/2-s}_+$ and its inverse $\Phi^{-1}:  h^{1/2-s}_+ \rightarrow  H^{-s}_{r,0}$
 are weakly sequentially continuous on $H^{-s}_{r,0}$.
\end{remark}
\begin{remark}\label{def integral I_s}
The above a priori bound for $\| u \|_{-s}$
can be extended to the space $H^{-s}_r$ as follows
$$
 \| v \|_{-s} 
\le F_s(\| \Phi(v - [v]) \|_{1/2 - s}) + |[v]| \, ,
\quad [v] = \langle v | 1 \rangle \, , \qquad
\forall v \in H^{-s}_r \, .
$$
For any $0 < s < 1/2$, the integral $ I_{-s}$ in 
Theorem \ref{Theorem 1}$(iv)$ is defined as
$$
I_{-s}(v) := F_s(\| \Phi(v - [v]) \|_{1/2 - s}) + |[v]| \, .
$$
\end{remark}

\medskip

\noindent
{\em Ideas of the proof of Theorem \ref{extension Phi}.} 
At the heart of the proof of Theorem 1 in \cite{GK} is the Lax operator $L_u$,
appearing in the Lax pair formulation in \cite{Nak} (cf. also \cite{BK}, \cite{CW}, \cite{FA})
$$
\partial_t L_u = [B_u, L_u]
$$
of \eqref{BO} -- see \cite[Appendix A]{GK} for a review.
For any given $u \in L^2_{r}$, the operator $L_u$
is the first order operator acting on the Hardy space $H_+$,
\begin{equation}\label{Lax operator}
L_u := -i \partial_x  - T_u\, , \qquad 
T_u (\cdot) := \Pi(u \, \cdot)
\end{equation}
where $\Pi$ is the orthogonal projector of $L^2$ onto $H_+$ and  $T_u$ is the Toeplitz operator with symbol $u$, 
$$
H_+ := 
\{ f \in L^2 \ : \ \widehat f(n) = 0 \, \, \, \forall n < 0  \} \, .
$$
The operator $L_u$ is self-adjoint with domain 
$H^1_+:= H^1 \cap H_+$, bounded from below, and has a compact resolvent. Its spectrum consists of real eigenvalues which
are bounded from below.
When listed in increasing order 
they form a sequence, satisfying
$$
\lambda_0 \le \lambda_1 \le \cdots \, ,   \qquad
\lim_{n \to \infty}\lambda_n = \infty \, .
$$
For our purposes, the most important properties of 
the spectrum of $L_u$ are that 
the eigenvalues are conserved along the flow of \eqref{BO}
and that they are all simple. More precisely, one has
\begin{equation}\label{def nth gap}
\gamma_n := \lambda_n - \lambda_{n-1} -1 \ge 0\, , \quad
\forall n \ge 1\, .
\end{equation}
The nonnegative number $\gamma_n$ is referred to  
as the $n$th gap
of the spectrum ${\rm{spec}}(L_u)$ of $L_u$.
-- see \cite[Appendix C]{GK} for an explanation of this terminology. For any $n \ge 1$, the complex 
Birkhoff coordinate $\zeta_n$
of Theorem \ref{main result} is related to $\gamma_n$ by
$|\zeta_n|^2 = \gamma_n$ whereas its phase is defined
in terms of an  appropriately normalized eigenfunction $f_n$
of $L_u$, corresponding to the eigenvalue $\lambda_n$.\\
A key step for the proof of Theorem \ref{extension Phi} 
is to show that
for any $u \in H^{-s}_{r}$ with $0 < s < 1/2$, 
the Lax operator $L_u$
can be defined as a self-adjoint operator with domain included in
$H^{1-s}_+$ and that its spectrum has properties
similar to the ones described above in the case where
$u \in L^2_r$. In particular, the inequality \eqref{def nth gap} continues to hold.
Since  the proof of Theorem \ref{extension Phi} requires several steps, it
is split up into  two parts, corresponding to
Section \ref{extension of Phi, part 1} and 
Section \ref{extension of Phi, part 2}.

\smallskip

A straightforward application of Theorem \ref{extension Phi}
is the following result on isospectral potentials.
To state it, we need to introduce some additional notation. For any $\zeta \in h^{1/2-s}_+$, define
\begin{equation}\label{def tor}
{\rm{Tor}}(\zeta) := \{ z \in h^{1/2-s}_+ \, : \,
|z_n| = |\zeta_n| \, \,\,\forall n \ge 1 \}.
\end{equation}
Note that ${\rm{Tor}}(\zeta)$ is an infinite product of (possibly degenerate) circles
and a compact
subset of $h^{1/2-s}_+$. Furthermore, for any 
$u \in H^{-s}_{r,0}$, let
$${\rm{Iso}}(u) := \{ v \in H^{-s}_{r,0} \, : \,
 {\rm{spec}}(L_v) = {\rm{spec}}(L_u) \} \, .
$$ 
where as above, ${\rm{spec}}(L_u)$ denotes the spectrum
of the Lax operator $L_u := -i \partial_x  - T_u$.
The spectrum of $L_u$ continues to be
characterized in terms of its gaps $\gamma_n$, $n \ge 1$, 
(cf. \eqref{def nth gap}) and
the extended Birkhoff coordinates continue to satisfy
$| \zeta_n |^2= \gamma_n$, $n \ge 1$.
An immediate consequence of Theorem \ref{extension Phi}
then is that \cite[Corollary 8.1]{GK} extends as follows:
\begin{corollary}\label{isospectral set}
For any $u \in H^{-s}_{r,0}$ with $0 < s < 1/2$, 
$$
\Phi({\rm{Iso}}(u)) = {\rm{Tor}}(\Phi(u)) \, .
$$
Hence by the continuity of $\Phi^{-1}$,
${\rm{Iso}}(u)$ is a compact, connected subset
of $H^{-s}_{r,0}$.
\end{corollary}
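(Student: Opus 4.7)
The plan is to derive the corollary directly from the extended Birkhoff map in Theorem \ref{extension Phi} together with the identity $|\zeta_n(v)|^2=\gamma_n(v)$, which, as recalled in the paragraph preceding the statement, continues to hold for $v\in H^{-s}_{r,0}$ with $0<s<1/2$. Once the set-theoretic equality $\Phi({\rm Iso}(u))={\rm Tor}(\Phi(u))$ is in place, the topological conclusions will follow automatically from the continuity and bijectivity of $\Phi$.

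I would establish the equality by two inclusions. For $\Phi({\rm Iso}(u))\subseteq{\rm Tor}(\Phi(u))$: any $v\in{\rm Iso}(u)$ satisfies ${\rm spec}(L_v)={\rm spec}(L_u)$, so the gap sequences agree, $\gamma_n(v)=\gamma_n(u)$ for every $n\ge 1$. By $|\zeta_n(\cdot)|^2=\gamma_n(\cdot)$ this gives $|\zeta_n(v)|=|\zeta_n(u)|$ for all $n$, i.e. $\Phi(v)\in{\rm Tor}(\Phi(u))$. For the reverse inclusion ${\rm Tor}(\Phi(u))\subseteq\Phi({\rm Iso}(u))$: given $z\in{\rm Tor}(\Phi(u))\subset h^{1/2-s}_+$, use surjectivity of $\Phi$ (Theorem \ref{extension Phi}$(i)$) to produce $v:=\Phi^{-1}(z)\in H^{-s}_{r,0}$. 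By construction $|\zeta_n(v)|=|\zeta_n(u)|$, hence $\gamma_n(v)=\gamma_n(u)$ for all $n\ge 1$. Since $\lambda_n=\lambda_0+n+\sum_{k=1}^n\gamma_k$, the spectra ${\rm spec}(L_v)$ and ${\rm spec}(L_u)$ coincide provided $\lambda_0(v)=\lambda_0(u)$. This last point is the one delicate step and I would handle it via the extension to $H^{-s}_{r,0}$ of the trace identity expressing $\lambda_0$ in terms of $\langle v\mid 1\rangle$ and the gap sequence (the analogue of \eqref{trace formula}): since both $u$ and $v$ lie in $H^{-s}_{r,0}$ they have zero mean, and together with the equality of all gaps this forces $\lambda_0(v)=\lambda_0(u)$. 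Consequently $v\in{\rm Iso}(u)$ and $z=\Phi(v)\in\Phi({\rm Iso}(u))$.

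For the topological statement, I would first check that ${\rm Tor}(\Phi(u))$ is a compact, connected subset of $h^{1/2-s}_+$. Connectedness is clear as ${\rm Tor}(\Phi(u))$ is a (possibly degenerate) product of circles. For compactness, observe that ${\rm Tor}(\Phi(u))$ is contained in a product of closed discs $\prod_n\{|w_n|\le|\zeta_n(u)|\}$, which is compact in the product topology; the natural inclusion of this product into $h^{1/2-s}_+$ is continuous by dominated convergence, using $\sum_n\langle n\rangle^{1-2s}|\zeta_n(u)|^2<\infty$ as a summable majorant, so ${\rm Tor}(\Phi(u))$ is the continuous image of a compact set. Applying the continuous inverse $\Phi^{-1}$ (Theorem \ref{extension Phi}$(i)$) then yields that ${\rm Iso}(u)=\Phi^{-1}({\rm Tor}(\Phi(u)))$ is compact and connected in $H^{-s}_{r,0}$.

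The main obstacle I expect is the identification of $\lambda_0(v)=\lambda_0(u)$ from the equality of the gap sequences and of the means, since all the other steps are essentially formal consequences of Theorem \ref{extension Phi}. This is where one needs to check that the relevant trace-type identity, already available in the $L^2_{r,0}$ setting of \cite[Corollary 8.1]{GK}, persists under the extension of $\Phi$ to $H^{-s}_{r,0}$; once this is in hand, the corollary is immediate.
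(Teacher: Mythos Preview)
Your argument is correct and matches the paper's approach: the paper states the corollary as an immediate consequence of Theorem~\ref{extension Phi} together with the facts recorded just before it (that the spectrum of $L_u$ is characterized by the gaps $\gamma_n$ and that $|\zeta_n|^2=\gamma_n$ persists for $u\in H^{-s}_{r,0}$), without writing out a separate proof. Your two-inclusion argument is exactly the natural way to unpack this.

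One small correction: the ``delicate step'' you flag is not delicate at all, and you are pointing to the wrong identity. The trace formula \eqref{trace formula} concerns $\|u\|^2$, not $\lambda_0$. What you actually need is \eqref{identity for lambda_0}, namely $-\lambda_0(v)=\sum_{n\ge 1}\gamma_n(v)$, which the paper explicitly establishes for $v\in H^{-s}_{r,0}$ in Section~\ref{extension of Phi, part 1} (it is derived from the product representation \eqref{generating function for -s} of the generating function). Once you cite \eqref{identity for lambda_0}, equality of the gap sequences immediately gives $\lambda_0(v)=\lambda_0(u)$, and then \eqref{formula for lambda_n} yields equality of all eigenvalues. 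No appeal to the mean or to a separate extension argument is required.
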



\section[extension of Birkhoff map. Part 1]{Extension of $\Phi$. Part 1}\label{extension of Phi, part 1}

In this section we prove the first part of 
Theorem \ref{extension Phi}, which we state
as a separate result:
\begin{proposition}\label{extension Phi, part 1}
{\sc (Extension of $\Phi$. Part 1)}
For any $0 < s < 1/2$, the following holds:\\
$(i)$ For any $n \ge 1$, the formula in \cite[(4.1)]{GK} of
the Birkhoff coordinate
$\zeta_n : L^2_{r,0} \to \C$ extends to $H^{-s}_{r,0}$ 
and for any $u \in H^{-s}_{r,0}$,
$(\zeta_n(u))_{n \ge 1}$ is in $h^{1/2-s}_+$.
The extension of the map $\Phi$ of Theorem \ref{main result}, 
also denoted by $\Phi$,
$$\Phi :H^{-s}_{r,0} \rightarrow h^{1/2-s}_+, \,\,  u \mapsto 
\Phi (u):=(\zeta_n(u))_{n\ge 1}\, ,$$
 maps bounded subsets of $H^{-s}_{r,0}$ 
to bounded subsets of $h^{1/2-s}_+$.\\
$(ii)$ $\Phi$ is sequentially weakly continuous
and one-to-one.
\end{proposition}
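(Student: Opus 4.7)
The proof splits naturally into: realizing the Lax operator $L_u = -i\partial_x - T_u$ as a suitable self-adjoint operator for $u \in H^{-s}_{r,0}$, extending the formula \cite[(4.1)]{GK} for $\zeta_n(u)$, and then establishing the three assertions (bounded-to-bounded, weak sequential continuity, injectivity). The crucial analytic input is the one-dimensional Sobolev product estimate $\|fg\|_s \le C_s \|f\|_{1/2}\|g\|_{1/2}$, valid on $\T$ precisely because $s < 1/2$. It shows that the sesquilinear form
\[
q_u(f,g) := \langle u \,|\, \bar f g \rangle_{H^{-s},H^s}, \qquad f,g \in H^{1/2}_+,
\]
is bounded on $H^{1/2}_+$. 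Splitting $u = u_{\le N} + u_{>N}$ via a frequency cut-off, $\|u_{>N}\|_{-s} \to 0$ as $N \to \infty$, so for $N$ large $T_{u_{>N}}$ is form-relatively bounded by $-i\partial_x$ with bound strictly less than one, while $T_{u_{\le N}}$ is bounded on $H_+$. The KLMN theorem then yields $L_u$ as a semibounded self-adjoint operator with form domain $H^{1/2}_+$ and compact resolvent (by Rellich). The shift-operator identity underlying the argument of \cite{GK} --- namely that $L_u(e^{ix}f) = e^{ix}(L_u f + f) + [e^{ix},\Pi](uf)$ on $H^{1/2}_+$, where the commutator is a Hankel-type term --- extends to this form setting, giving simplicity of the eigenvalues $\lambda_0 < \lambda_1 < \cdots$ together with the gap inequality $\gamma_n := \lambda_n - \lambda_{n-1} - 1 \ge 0$. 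The formula \cite[(4.1)]{GK} for $\zeta_n$ in terms of eigenvalues and normalized eigenfunctions of $L_u$ then extends verbatim and yields $|\zeta_n(u)|^2 = \gamma_n$.

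The hardest point --- and what truly drives the entire extension --- is the quantitative estimate
\[
\|\Phi(u)\|_{1/2-s}^2 \,=\, \sum_{n \ge 1} \langle n \rangle^{1-2s}\gamma_n \,\le\, F_s(\|u\|_{-s}),
\]
the negative-regularity analogue of the $L^2$ trace identity $2\sum_n n\gamma_n = \|u\|_0^2$ of Remark \ref{RemarkThm1}$(ii)$. I would approach it through a regularized perturbation determinant of $L_u$ relative to $-i\partial_x$: it is entire in $\lambda$, vanishes exactly at the $\lambda_n$, and its logarithmic growth in $\lambda$ is controllable in terms of $\|u\|_{-s}$ by means of the product estimate above. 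A contour integral of the resulting bound converts it into the weighted gap sum; the exponent $1-2s$ in the weight is exactly what that product estimate allows, which makes $s = 1/2$ the natural threshold. I expect this estimate to be the main technical obstacle.

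Once these ingredients are in place, the remaining claims follow by soft arguments. For weak sequential continuity, if $u_k \rightharpoonup u$ in $H^{-s}_{r,0}$, Rellich yields strong convergence $u_k \to u$ in $H^{-s'}_{r,0}$ for any $s<s'<1/2$, and the form-perturbation bound upgrades this to norm-resolvent convergence $L_{u_k} \to L_u$; in particular every simple eigenvalue and its normalized spectral projector converges, so $\zeta_n(u_k) \to \zeta_n(u)$ for each $n \ge 1$, which combined with the uniform bound in $h^{1/2-s}_+$ gives $\Phi(u_k) \rightharpoonup \Phi(u)$. For injectivity, if $\Phi(u) = \Phi(v)$, then matching phases of the $\zeta_n$'s forces $L_u$ and $L_v$ to share not just their eigenvalues but also their normalized eigenprojectors, whence $L_u = L_v$ as self-adjoint operators on $H_+$. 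Consequently $T_{u-v} = 0$; testing against $e^{inx}$ for $n \ge 0$ and reading off Fourier coefficients shows that $u-v$ has vanishing non-negative Fourier content, and reality together with $\langle u-v \,|\, 1\rangle = 0$ then forces $u = v$.
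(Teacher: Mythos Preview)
Your construction of $L_u$ via form methods mirrors the paper's (Lemmas \ref{estimate for T_u f}--\ref{operator L^+_u} and Corollary \ref{definition L_u}). The first genuine gap is the bounded-to-bounded estimate in part $(i)$, which you correctly flag as the crux but do not actually prove. The paper's route is entirely different from your proposed perturbation-determinant approach and bypasses any determinant machinery: it establishes (Lemma \ref{basis on Sobolev scale}) that the eigenfunction expansion $f \mapsto (\langle f \,|\, f_n \rangle)_{n \ge 0}$ is a bounded isomorphism $H^t_+ \to h^t(\N_0)$ for every $-1+s \le t \le 1-s$, with norms uniform on bounded subsets of $H^{-s}_{r,0}$. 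Applying this with $t = -s$ to $f = \Pi u$ gives $(\langle \Pi u \,|\, f_n \rangle)_{n \ge 0} \in h^{-s}(\N_0)$ with a uniform bound; the identity $\lambda_n \langle 1 \,|\, f_n \rangle = -\langle \Pi u \,|\, f_n \rangle$ then puts $(\langle 1 \,|\, f_n \rangle)_{n \ge 1}$ into $h^{1-s}_+$, and since $\kappa_n^{-1/2} = \sqrt{n} + o(1)$ uniformly, $\zeta_n = \kappa_n^{-1/2}\langle 1 \,|\, f_n \rangle$ lies in $h^{1/2-s}_+$ with the required bound. This is shorter and more direct than a contour-integral argument, and it is what you are missing.

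Your injectivity argument in part $(ii)$ has a real flaw. From $\Phi(u) = \Phi(v)$ you want to conclude that $L_u$ and $L_v$ share not only eigenvalues but also eigenprojectors, whence $L_u = L_v$. But the rank-one projector $|f_n\rangle\langle f_n|$ is insensitive to the phase of $f_n$, so the phases of the $\zeta_n$'s carry no information about the projectors; isospectral Lax operators need not coincide --- the isospectral set ${\rm Iso}(u)$ is an entire torus (Corollary \ref{isospectral set}). What the phase of $\zeta_n$ actually encodes is the phase of $\langle 1 \,|\, f_n \rangle$, and the paper uses this correctly by invoking the explicit inverse-spectral formulas of \cite[Proposition 4.2]{GK}, which reconstruct $\widehat u(k)$, $k \ge 1$, directly from $(\zeta_n(u))_{n \ge 1}$ and continue to hold for $u \in H^{-s}_{r,0}$. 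That reconstruction step is the missing ingredient in your argument.
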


 First we need to establish some auxiliary results
 related to the Lax operator $L_u$.

 \begin{lemma}\label{estimate for T_u f}
 Let $u \in H^{-s}_{r,0}$ with $0 \le s < 1/2$.
 Then for any $f, g \in H^{1/2}_+$, 
 the following estimates hold:\\
 $(i)$ There exists a constant $C_{1,s} >0$ only depending
 on $s$, so that
 \begin{equation}\label{para}
 \| f g \|_s \le 
  C_{1,s}^2 \| f\|_\sigma \| g \|_\sigma\, , \qquad
 \sigma:= (1/2 +s)/2 \,.
 \end{equation}
$(ii)$ The expression $\langle  u | f \overline f \rangle$
is well defined and
satisfies the estimate
 \begin{equation}\label{estimate Toeplitz}
 | \langle  u | f \overline f \rangle |
 \le \frac{1}{2} \| f \|_{1/2}^2 + 
 \eta_s(\| u \|_{-s}) \|f \|^2
 \end{equation}
 where
 \begin{equation}\label{def eta_s}
 \eta_s(\| u \|_{-s}) := \|u\|_{-s}
 \big( 2 ( 1 + \|u\|_{-s}))^{\alpha} C_{2,s}^2 \, ,
 \quad \alpha:= \frac{1+2s}{1-2s} \, 
 \end{equation}
 and $C_{2,s} >0$ is a constant, only depending on $s$.
 \end{lemma}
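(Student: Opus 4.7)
\medskip

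\noindent
\textbf{Proof proposal.} The plan is to establish the bilinear estimate (i) directly on the Fourier side and then deduce (ii) from (i) via the $H^{-s}$--$H^{s}$ duality, an interpolation inequality, and Young's inequality.

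For (i), I start from $\widehat{fg}(n)=\sum_{k}\widehat f(n-k)\widehat g(k)$ and apply Cauchy--Schwarz with weights $\langle n-k\rangle^{2\sigma}\langle k\rangle^{2\sigma}$ to obtain
\[
|\widehat{fg}(n)|^2 \le K_n \sum_k \langle n-k\rangle^{2\sigma}|\widehat f(n-k)|^2\,\langle k\rangle^{2\sigma}|\widehat g(k)|^2,\qquad K_n:=\sum_k\frac{1}{\langle n-k\rangle^{2\sigma}\langle k\rangle^{2\sigma}}.
\]
With $\sigma=(1/2+s)/2\in(1/4,1/2)$ for $0<s<1/2$, the classical three-region splitting of $K_n$ into $\{|k|\le|n|/2\}$, $\{|n-k|\le|n|/2\}$, and $\{|k|,|n-k|\ge|n|/2\}$ gives $K_n\le C\langle n\rangle^{1-4\sigma}=C\langle n\rangle^{-2s}$, so multiplying by $\langle n\rangle^{2s}$ and summing yields $\|fg\|_s^2\le C\|f\|_\sigma^2\|g\|_\sigma^2$, which provides $C_{1,s}$. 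At the endpoint $s=0$ (where $\sigma=1/4$ and $K_n$ picks up a logarithmic loss) I would treat the product estimate separately via the critical embedding $H^{1/4}(\T)\hookrightarrow L^4(\T)$ together with $\|fg\|_{L^2}\le\|f\|_{L^4}\|g\|_{L^4}$.

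For (ii), (i) applied with $g=\overline f$ gives $\|f\overline f\|_s\le C_{1,s}^2\|f\|_\sigma^2$, so the extended $L^2$ pairing between $H^{-s}$ and $H^{s}$ is well defined and
\[
|\langle u\,|\,f\overline f\rangle|\le\|u\|_{-s}\|f\overline f\|_s\le C_{1,s}^2\|u\|_{-s}\|f\|_\sigma^2.
\]
Since $\sigma=\tfrac12(1/2+s)$, log-convexity of Sobolev norms yields $\|f\|_\sigma^2\le\|f\|_{1/2}^{1+2s}\|f\|^{1-2s}$. I would then apply Young's inequality with conjugate exponents $p=2/(1+2s)$ and $q=2/(1-2s)$, noting $q/p=\alpha$, and with a free weight $\lambda>0$,
\[
\|f\|_{1/2}^{1+2s}\|f\|^{1-2s}\le\frac{\lambda^p}{p}\|f\|_{1/2}^2+\frac{\lambda^{-q}}{q}\|f\|^2,
\]
then choose $\lambda$ so that $C_{1,s}^2\|u\|_{-s}\lambda^p/p=1/2$. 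A direct computation (using $\lambda^{-q}=(\lambda^p)^{-\alpha}$) shows the resulting coefficient of $\|f\|^2$ is a constant depending only on $s$ and $C_{1,s}$ times $\|u\|_{-s}^{1+\alpha}$, and the elementary bound $\|u\|_{-s}^{1+\alpha}\le\|u\|_{-s}(1+\|u\|_{-s})^\alpha$ matches the stated form of $\eta_s$ upon absorbing the $s$-dependent constants into $C_{2,s}$.

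The main obstacle is the endpoint $s=0$ in (i): there $\sigma=1/4$ is exactly the critical Sobolev exponent for $L^4$ on $\T$, so the Schur estimate for $K_n$ breaks down logarithmically and has to be replaced by the critical embedding $H^{1/4}\hookrightarrow L^4$. For $s>0$ the proof of (i) reduces to a textbook Fourier--Cauchy--Schwarz computation, and the passage from (i) to (ii) is essentially forced by the requirement that the coefficient of $\|f\|_{1/2}^2$ equal $\tfrac12$; that is what pins down the exponent $\alpha$ and the power $(1+\|u\|_{-s})^\alpha$ in the definition of $\eta_s$.
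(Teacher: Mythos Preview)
Your proof is correct and follows essentially the same approach as the paper: for (ii), both arguments use duality, the interpolation $\|f\|_\sigma \le \|f\|_{1/2}^{1/2+s}\|f\|^{1/2-s}$, and Young's inequality with the exponent $\alpha=(1+2s)/(1-2s)$, differing only cosmetically in how the free parameter is chosen (the paper takes $\varepsilon=(2(1+\|u\|_{-s}))^{-1}$ directly rather than solving for equality in the $\|f\|_{1/2}^2$ coefficient). For (i) the paper simply cites standard paraproduct references rather than carrying out the explicit Fourier-side computation you provide.
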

 \begin{proof}
 $(i)$ Estimate \eqref{para} is obtained from standard
 estimates of paramultiplication
 (cf. e.g. \cite[Exercise II.A.5]{AG}, 
 \cite[Theorem 2.82, Theorem 2.85]{BCD}).
 $(ii)$ By item $(i)$,
 $\langle  u | f \overline f \rangle$
 is well defined by duality and satisfies
 $$
 | \langle  u | f \overline f \rangle |
 \le \| u \|_{-s} \|f \overline f \|_s 
 \le  \| u \|_{-s} C_{1,s}^2 \| f\|_\sigma^2 \, .
 $$
 In order to estimate $\| f\|_\sigma^2$, note that 
 by interpolation one has  
$ \| f\|_\sigma \le \| f \|_{1/2}^{1/2 +s}
  \| f \|^{1/2 -s}$ and hence 
 \begin{equation}\label{interpolation}
  C_{1,s} \| f\|_\sigma \le \| f \|_{1/2}^{1/2 +s}
  \big( C_{2,s} \| f \| \big)^{1/2 -s}
 \end{equation}
 for some constant $C_{2,s}> 0$.
 Young's inequality then yields
 for any $\e > 0$
 \begin{equation}\label{Young}
 \big( C_{1,s} \| f\|_\sigma \big)^2 \le
 \e \| f \|_{1/2}^2 + 
 \e^{-\alpha} \big( C_{2,s} \| f \| \big)^2\, ,
 \qquad \alpha = \frac{1+2s}{1-2s} \, .
 \end{equation}
 Estimate \eqref{estimate Toeplitz} then
 follows from \eqref{Young} by
 choosing $\e = \big( 2 ( 1 + \|u\|_{-s}))^{-1}$. 
 \end{proof}
 Note that estimate \eqref{estimate Toeplitz} implies
 that the sesquilinear form $\langle T_u f | g \rangle $
 on $H^{1/2}_+$, 
 obtained from the Toeplitz operator $T_u f := \Pi (u f)$
 with symbol $u \in L^2_{r,0}$,
 can be defined  for any $u \in H^{-s}_{r,0}$ 
 with $0 \le s < 1/2$ by 
setting 
$\langle T_u f | g \rangle := 
\langle u | g \overline f  \rangle$ and that it is bounded.
 For any $u \in H^{-s}_{r,0},$ we then define the 
 sesquilinear form $Q_u^+$ on $H^{1/2}_+$ as follows
 \begin{equation}\label{Q_u^+}
 Q_u^+(f, g) : = \langle -i\partial_x f | g \rangle
 - \langle T_u f | g \rangle +
 \big( 1 + \eta_s(\|u\|_{-s}) \big) \langle f | g \rangle
 \end{equation}
 where $\eta_s(\|u\|_{-s})$ is given by \eqref{def eta_s}. 
 The following lemma says that the quadratic form 
 $Q^+_u(f, f)$ is equivalent to $\|f\|_{1/2}^2$.
 More precisely, the following holds.
 \begin{lemma}\label{comparison Q_u^+}
 For any $ u \in H^{-s}_{r,0}$ with $0 \le s < 1/2$,
 $Q^+_u$ is a positive, sesquilinear form,
 satisfying 
 $$
 \frac{1}{2} \| f \|_{1/2}^2 \le  Q^+_u(f, f) \le 
 \big( 3 + 2 \eta_s(\|u\|_{-s}) \big) \| f \|_{1/2}^2 \, ,
 \quad \forall f \in H^{1/2}_+\, .
 $$
 \end{lemma}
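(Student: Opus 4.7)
The plan is to reduce everything to Plancherel combined with the key pointwise estimate \eqref{estimate Toeplitz} supplied by Lemma \ref{estimate for T_u f}. Since $f \in H^{1/2}_+$ satisfies $\widehat{f}(n)=0$ for $n<0$, the operator $-i\partial_x$ acts on $H^{1/2}_+$ with non-negative spectrum, and one has the elementary identity
\begin{equation*}
\langle -i\partial_x f\,|\,f\rangle + \|f\|^2 \;=\; \sum_{n\ge 0}(n+1)|\widehat{f}(n)|^2.
\end{equation*}
Comparing this with $\|f\|_{1/2}^2 = \sum_{n\ge 0}\langle n\rangle|\widehat{f}(n)|^2$ and using $\langle n\rangle \le n+1 \le 2\langle n\rangle$ for $n\ge 0$ yields the two-sided bound
\begin{equation*}
\|f\|_{1/2}^2 \;\le\; \langle -i\partial_x f\,|\,f\rangle + \|f\|^2 \;\le\; 2\|f\|_{1/2}^2,
\end{equation*}
which is the only Fourier-analytic ingredient needed. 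Note also that the sesquilinear form $Q_u^+$ is well-defined on $H^{1/2}_+$: the first and third terms are obvious, and the second is controlled by duality as in Lemma \ref{estimate for T_u f}(ii).

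For the lower bound, I would write $Q_u^+(f,f) = [\langle -i\partial_x f\,|\,f\rangle + \|f\|^2] + \eta_s(\|u\|_{-s})\|f\|^2 - \langle T_uf\,|\,f\rangle$, apply \eqref{estimate Toeplitz} to the Toeplitz term and observe that the $\eta_s(\|u\|_{-s})\|f\|^2$ contribution cancels exactly the $\eta_s(\|u\|_{-s})\|f\|^2$ coming from \eqref{estimate Toeplitz}. What remains is
\begin{equation*}
Q_u^+(f,f) \;\ge\; \bigl[\langle -i\partial_x f\,|\,f\rangle + \|f\|^2\bigr] - \tfrac{1}{2}\|f\|_{1/2}^2 \;\ge\; \|f\|_{1/2}^2 - \tfrac{1}{2}\|f\|_{1/2}^2 \;=\; \tfrac{1}{2}\|f\|_{1/2}^2,
\end{equation*}
which is precisely the lower bound, and at the same time shows positivity of $Q_u^+$.

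For the upper bound, I would bound the three pieces separately. The kinetic plus $\|f\|^2$ piece is at most $2\|f\|_{1/2}^2$ by the Fourier identity, the Toeplitz term is controlled by \eqref{estimate Toeplitz} giving $\tfrac{1}{2}\|f\|_{1/2}^2 + \eta_s(\|u\|_{-s})\|f\|^2$, and the remaining $\eta_s(\|u\|_{-s})\|f\|^2$ from the last summand of $Q_u^+$ combines with it to give $2\eta_s(\|u\|_{-s})\|f\|^2$. Since $\|f\|^2 \le \|f\|_{1/2}^2$, adding up yields
\begin{equation*}
Q_u^+(f,f) \;\le\; 2\|f\|_{1/2}^2 + \tfrac{1}{2}\|f\|_{1/2}^2 + 2\eta_s(\|u\|_{-s})\|f\|_{1/2}^2 \;\le\; \bigl(3 + 2\eta_s(\|u\|_{-s})\bigr)\|f\|_{1/2}^2,
\end{equation*}
which is the desired upper bound.

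There is no real obstacle here: the lemma is essentially a bookkeeping statement once \eqref{estimate Toeplitz} is in hand, the whole point being that the constant $\eta_s(\|u\|_{-s})$ in the shift of $Q_u^+$ was chosen precisely to absorb the low-order term produced by the duality estimate on the Toeplitz form. The only subtlety worth highlighting is that the factor $\tfrac{1}{2}$ in \eqref{estimate Toeplitz} is strictly less than $1$, which is exactly what makes the kinetic energy dominate and yields the coercivity constant $\tfrac{1}{2}$ rather than $0$.
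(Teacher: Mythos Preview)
Your proof is correct and follows essentially the same route as the paper: both rely on the Fourier comparison $\|f\|_{1/2}^2 \le \langle -i\partial_x f\,|\,f\rangle + \|f\|^2 \le 2\|f\|_{1/2}^2$ together with estimate \eqref{estimate Toeplitz}, with the shift $\eta_s(\|u\|_{-s})$ absorbing the low-order term from the Toeplitz bound. The paper's write-up differs only cosmetically, rewriting \eqref{estimate Toeplitz} as $|\langle T_uf\,|\,f\rangle| \le \tfrac12\langle -i\partial_x f\,|\,f\rangle + (\tfrac12+\eta_s)\|f\|^2$ before combining, whereas you keep the $\|f\|_{1/2}^2$ form throughout; the arithmetic is the same.
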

 \begin{proof}
 (i) Using that $u$ is real valued, one verifies that
 $Q_u^+$ is sesquilinear.
 The claimed estimates are obtained from 
 \eqref{estimate Toeplitz} as follows:
 since $\langle n \rangle \le 1 + |n|$
 one has
 $
 \| f \|_{1/2}^2 \le 
 \langle -i \partial_x f | f \rangle  + \| f \|^2$,
 and hence by \eqref{estimate Toeplitz},
 $$
 | \langle T_u f | f \rangle | \le 
 \frac{1}{2} \langle -i \partial_x f | f \rangle
 + \big(\frac{1}{2} + \eta_s(\|u\|_{-s}) \big)\| f \|^2 \, .
 $$
 By the definition \eqref{Q_u^+}, the claimed estimates 
 then follow.
 In particular, the lower bound for $Q_u^+(f, f)$ shows that
 $Q_u^+$ is positive.
 \end{proof}
 Denote by 
 $\langle f | g \rangle_{1/2} \equiv \langle f | g \rangle_{H^{1/2}_+}$
 the inner product, corresponding to the norm 
 $\| f \|_{1/2}$. It is given by
 $$
 \langle f |  g \rangle_{1/2} =
 \sum_{n \ge 0} \langle n \rangle 
 \widehat f(n) \overline{\widehat g (n)}
 \, , \quad \forall f, g \in H^{1/2}_+ \, .
 $$
 Furthermore, denote by 
 $D : H^t_+ \to H^{t-1}_+$ and
 $\langle D \rangle : H^t_+ \to H^{t-1}_+$, $t\in \R$, 
 the Fourier multipliers, defined for $f \in H^t_+$
 with Fourier series
 $f = \sum_{n= 0}^\infty \widehat f(n) e^{inx}$ by 
 $$
  D f := -i\partial_x f = \sum_{n = 0}^\infty 
 n  \widehat f(n) e^{inx} \, , \qquad
 \langle D \rangle f := \sum_{n = 0}^\infty 
 \langle n \rangle \widehat f(n) e^{inx} \, .
 $$
 \begin{lemma}\label{Lax Milgram}
 For any $ u \in H^{-s}_{r,0}$ with $0 \le s < 1/2$,
 there exists a bounded linear isomorphism 
 $A_u : H^{1/2}_+ \to H^{1/2}_+$ so that
 $$
 \langle A_u f | g \rangle_{1/2} =
 Q^+_u(f, g)\, , \quad \forall f, g \in H^{1/2}_+\, .
 $$
 The operator $A_u$ has the following properties:\\
 $(i)$ $A_u$ and its inverse $A_u^{-1}$ are symmetric,
 i.e., for any $f, g \in H^{1/2}_+,$
 $$
 \langle A_u f | g \rangle_{1/2} =
 \langle  f | A_u g \rangle_{1/2}\, , \quad
 \langle A_u^{-1} f | g \rangle_{1/2} =
 \langle  f | A_u^{-1}g \rangle_{1/2} \, .
 $$
 $(ii)$ The linear isomorphism $B_u$, given by the composition
 $$
 B_u := \langle D \rangle A_u : H^{1/2}_+ \to H^{-1/2}_+
 $$
 satisfies
 $$
Q_u^+ (f, g) = \langle B_u f | g \rangle \, , 
\quad \forall f, g \in H^{1/2}_+ \, .
 $$
 The operator norm of 
 $B_u$ and the one of its inverse can be bounded uniformly
 on bounded subsets of elements $u$ in $H^{-s}_{r,0}$.
 \end{lemma}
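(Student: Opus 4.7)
The plan is to apply the Lax--Milgram theorem on the Hilbert space $(H^{1/2}_+,\langle\cdot\mid\cdot\rangle_{1/2})$ to the sesquilinear form $Q^+_u$. By Lemma \ref{comparison Q_u^+}, $Q^+_u(f,f)$ is comparable to $\|f\|_{1/2}^2$, so $Q^+_u$ is both bounded and coercive on $H^{1/2}_+$, with continuity and coercivity constants controlled by $\eta_s(\|u\|_{-s})$. Lax--Milgram then yields a unique bounded linear isomorphism $A_u:H^{1/2}_+\to H^{1/2}_+$ such that $\langle A_u f\mid g\rangle_{1/2}=Q^+_u(f,g)$, with operator norms $\|A_u\|$ and $\|A_u^{-1}\|$ bounded uniformly on any bounded subset of $H^{-s}_{r,0}$.

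For part $(i)$, the symmetry follows from the Hermitian symmetry of $Q^+_u$. The form $\langle -i\partial_x f\mid g\rangle$ is Hermitian because $-i\partial_x$ is symmetric on $H^1_+$, while $\overline{\langle u\mid f\overline g\rangle}=\langle u\mid g\overline f\rangle$ uses that $u$ is real-valued, showing the Toeplitz contribution $\langle T_u f\mid g\rangle=\langle u\mid g\overline f\rangle$ is Hermitian; the remaining scalar term is trivially Hermitian. Thus $\langle A_u f\mid g\rangle_{1/2}=\overline{\langle A_u g\mid f\rangle_{1/2}}=\langle f\mid A_u g\rangle_{1/2}$. Symmetry of $A_u^{-1}$ follows at once by writing
\[
\langle A_u^{-1}f\mid g\rangle_{1/2}=\langle A_u^{-1}f\mid A_u A_u^{-1}g\rangle_{1/2}=\langle A_u A_u^{-1}f\mid A_u^{-1}g\rangle_{1/2}=\langle f\mid A_u^{-1}g\rangle_{1/2}.
\]

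For part $(ii)$, I would use the fact that $\langle D\rangle:H^{t}_+\to H^{t-1}_+$ is, for every $t\in\mathbb R$, an isometric isomorphism (it acts as multiplication by $\langle n\rangle$ on Fourier coefficients) and that the identity
\[
\langle\langle D\rangle f\mid g\rangle=\sum_{n\ge 0}\langle n\rangle\widehat f(n)\overline{\widehat g(n)}=\langle f\mid g\rangle_{1/2}
\]
holds for all $f,g\in H^{1/2}_+$, where the left-hand side is the duality pairing $H^{-1/2}_+\times H^{1/2}_+\to\mathbb C$. Consequently $B_u=\langle D\rangle A_u$ is the composition of two linear isomorphisms, hence a bounded linear isomorphism $H^{1/2}_+\to H^{-1/2}_+$, and satisfies
\[
\langle B_u f\mid g\rangle=\langle\langle D\rangle A_u f\mid g\rangle=\langle A_u f\mid g\rangle_{1/2}=Q^+_u(f,g).
\]
The uniform control of $\|B_u\|$ and $\|B_u^{-1}\|$ on bounded subsets of $H^{-s}_{r,0}$ is immediate, since $\langle D\rangle$ is an isometry independent of $u$, while the corresponding bounds on $A_u$ and $A_u^{-1}$ have already been secured by Lax--Milgram together with Lemma \ref{comparison Q_u^+}.

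The only step that demands any scrutiny is the compatibility of the duality pairing $\langle \cdot\mid\cdot\rangle$ on $H^{-1/2}_+\times H^{1/2}_+$ with the definition of $\langle T_u f\mid g\rangle$ coming from $\langle u\mid g\overline f\rangle$; this is already encoded in Lemma \ref{estimate for T_u f}$(ii)$ and in the definition \eqref{Q_u^+}, so no genuine obstacle remains. Everything else is a direct application of Lax--Milgram and elementary Fourier analysis on the Hardy space.
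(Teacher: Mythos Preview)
Your proof is correct and follows essentially the same route as the paper's: the paper invokes the Fr\'echet--Riesz theorem (noting that $Q_u^+$ is an inner product equivalent to $\langle\cdot\mid\cdot\rangle_{1/2}$) rather than Lax--Milgram, but for a Hermitian form these amount to the same thing, and the remaining steps---symmetry from the Hermitian property of $Q_u^+$, and the passage to $B_u$ via $\langle\langle D\rangle f\mid g\rangle=\langle f\mid g\rangle_{1/2}$---are identical. The only cosmetic difference is that the paper constructs $A_u^{-1}$ explicitly by a second application of Fr\'echet--Riesz, whereas you obtain invertibility directly from Lax--Milgram and deduce symmetry of $A_u^{-1}$ from that of $A_u$.
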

 \begin{proof}
 By Lemma \ref{comparison Q_u^+}, the 
 sesquilinear form
 $Q_u^+$ is an inner product on $H^{1/2}_+$, equivalent to
 the inner product $\langle \cdot | \cdot \rangle_{1/2}$.
 Hence by the theorem of Fr\'echet-Riesz, for any 
 $g \in H^{1/2}_+$, there exists a unique element in 
 $H^{1/2}_+$, which we denote by $A_ug$, so that
 $$
 \langle A_u g | f \rangle_{1/2} =
 Q^+_u(g, f)\, , \quad  \forall f \in H^{1/2}_+\, .
 $$Invitation for special issue in honor of Tony Bloch in Journal  of Geometric Mechanics
 Then $A_u : H^{1/2}_+ \to H^{1/2}_+$ is a linear, injective
 operator, which by Lemma \ref{comparison Q_u^+} is bounded,
 i.e., for any $f, g \in H^{1/2}_+$,
 \begin{align}
 | \langle A_u g |  f \rangle_{1/2} \ | & =
 | Q^+_u(g, f) | \le 
 Q^+_u(g, g)^{1/2} Q^+_u(f, f)^{1/2} \nonumber \\
 & \le \big( 3 + 2 \eta_s(\|u\|_{-s}) \big) 
 \|g\|_{1/2} \| f \|_{1/2} \nonumber \, ,
\end{align}
implying that 
$\| A_u g \|_{1/2} \le 
\big( 3 + 2 \eta_s(\|u\|_{-s}) \big) \| g \|_{1/2}$.

Similarly, by the theorem of Fr\'echet-Riesz, for any 
 $h \in H^{1/2}_+$, there exists a unique element in 
 $H^{1/2}_+$, which we denote by $E_uh$, so that
$$
 \langle h | f \rangle_{1/2} =
 Q^+_u(E_u h, f)\, , \quad  \forall f \in H^{1/2}_+\, .
 $$
 Then $E_u : H^{1/2}_+ \to H^{1/2}_+$ is a linear, injective
 operator, which by Lemma \ref{comparison Q_u^+} is bounded, i.e.,
 $$
 \frac{1}{2} \|E_u h\|_{1/2}^2 \le 
  Q^+_u(E_u h, E_u h) =
  \langle h | E_u h \rangle_{1/2} \le 
  \|h \|_{1/2} \| E_u h \|_{1/2}\, ,
 $$
 implying that  
 $\|E_u h\|_{1/2} \le 2 \| h\|_{1/2}$.
Note that $A_u(E_u h) = h$ and hence $E_u$ is the inverse
of $A_u$. Therefore, $A_u : H^{1/2}_+ \to H^{1/2}_+$
is a bounded linear isomorphism. Next we show item $(i)$.
For any $f, g \in H^{1/2}_+$,
$$
 \langle g | A_u f \rangle_{1/2} =
 \overline{ \langle A_u f |  g \rangle}_{1/2} =
 \overline{Q_u^+ (f, g)} =
 Q_u^+(g, f) = \langle A_u g | f \rangle_{1/2} \, .
$$
The symmetry of $A_u^{-1}$ is proved in the same way.
Towards item $(ii)$, note that for any $f, g \in H^{1/2}_+,$
$\langle f |  g \rangle_{1/2} = 
\langle \langle D \rangle f | g \rangle$
and therefore
$$
 \langle A_u g |  f \rangle_{1/2} =
  \langle \langle D \rangle A_u g |  f \rangle\, ,
$$
implying that the operator 
$B_u = \langle D \rangle A_u : H^{1/2}_+ \to H^{-1/2}_+$  
is a bounded linear isomorphism and that
$$
\langle B_u g |  f \rangle = Q_u^+(g, f) \, , \quad
\forall g,f \in H^{1/2}_+ \, . 
$$
The last statement of (ii) follows from Lemma \ref{comparison Q_u^+}.
\end{proof}

We denote by $L_u^+$ the restriction
 of $B_u$ to ${\rm{dom}}(L_u^+)$, defined as
 $$
 {\rm{dom}}(L_u^+) := \{ g \in H^{1/2}_+ \, : \,  
 B_u g \in H_+\} \, .
 $$
 We view $L^+_u$ as an unbounded linear operator on $H_+$
 and write
 $L^+_u : {\rm{dom}}( L_u^+) \to H_+$.
 \begin{lemma}\label{operator L^+_u}
 For any $ u \in H^{-s}_{r,0}$ with $0 \le s < 1/2$,
 the following holds:\\
 $(i)$ ${\rm{dom}}(L_u^+)$ is a dense subspace of $H^{1/2}_+$
 and hence of $H_+$.\\
 $(ii)$ $L_u^+ : {\rm{dom}}( L_u^+) \to H_+$ is bijective
 and the right inverse of $L^+_u$,
 $(L^+_u)^{-1} : H_+ \to H_+$,
 is compact. Hence $L_u^+$ has discrete spectrum.\\
 $(iii)$ $(L^+_u)^{-1}$ is symmetric and $L^+_u$ is self-adjoint
 and positive.
 \end{lemma}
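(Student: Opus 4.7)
The plan is to exploit the isomorphism $B_u : H^{1/2}_+ \to H^{-1/2}_+$ from Lemma \ref{Lax Milgram}, together with the identity ${\rm{dom}}(L_u^+) = B_u^{-1}(H_+)$, which holds by the very definition of $L_u^+$ as the restriction of $B_u$ to those $g \in H^{1/2}_+$ for which $B_u g \in H_+$. Three ingredients then drive the argument: (a) the bijectivity of $B_u$, (b) the compact Rellich embedding $H^{1/2}_+ \hookrightarrow H_+$, and (c) the hermitian symmetry $Q_u^+(f, g) = \overline{Q_u^+(g, f)}$, which follows directly from the symmetry of $A_u$ in Lemma \ref{Lax Milgram}(i).

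For item (ii), given any $h \in H_+ \subset H^{-1/2}_+$, the bijectivity of $B_u$ yields a unique $g \in H^{1/2}_+$ with $B_u g = h$; then $g \in {\rm{dom}}(L_u^+)$ by definition and $L_u^+ g = h$, so $L_u^+$ maps bijectively onto $H_+$. Its inverse factors as $(L_u^+)^{-1} = \iota \circ (B_u^{-1}|_{H_+})$, where $B_u^{-1}|_{H_+} : H_+ \to H^{1/2}_+$ is bounded by Lemma \ref{Lax Milgram}(ii) and $\iota : H^{1/2}_+ \hookrightarrow H_+$ is the (Rellich) compact inclusion. Hence $(L_u^+)^{-1} : H_+ \to H_+$ is compact, and once (iii) is established, the spectral theorem for compact self-adjoint operators yields the discreteness of the spectrum of $L_u^+$.

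For item (i), since $H_+$ is dense in $H^{-1/2}_+$ (by truncation of Fourier series) and $B_u^{-1} : H^{-1/2}_+ \to H^{1/2}_+$ is a homeomorphism, ${\rm{dom}}(L_u^+) = B_u^{-1}(H_+)$ is dense in $H^{1/2}_+$; the continuous embedding $H^{1/2}_+ \hookrightarrow H_+$ then transfers the density to $H_+$.

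For item (iii), the central identity is $\langle B_u f | g \rangle = \langle f | B_u g \rangle$ for all $f, g \in H^{1/2}_+$, where both sides are read off from the $H^{-1/2}_+$–$H^{1/2}_+$ pairing. This follows from the chain $\langle B_u f | g \rangle = Q_u^+(f, g) = \overline{Q_u^+(g, f)} = \overline{\langle B_u g | f \rangle} = \langle f | B_u g \rangle$. Specializing to $f = (L_u^+)^{-1} h$ and $g = (L_u^+)^{-1} k$ with $h, k \in H_+$ gives $\langle (L_u^+)^{-1} h | k \rangle = \langle h | (L_u^+)^{-1} k \rangle$, so $(L_u^+)^{-1}$ is a bounded symmetric, hence self-adjoint, operator on $H_+$, and its unbounded inverse $L_u^+$ is therefore also self-adjoint. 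Positivity is immediate from $\langle L_u^+ f | f \rangle = Q_u^+(f, f) \ge \tfrac{1}{2}\|f\|_{1/2}^2 \ge 0$ by Lemma \ref{comparison Q_u^+}. The only mild subtlety to watch is the sesquilinearity bookkeeping when transporting the symmetry of $A_u$ with respect to $\langle \cdot | \cdot \rangle_{1/2}$ to the duality pairing; apart from that, the argument is a routine application of Rellich's theorem together with the functional-analytic setup already in place.
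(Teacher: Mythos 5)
Your proposal is correct and follows essentially the same route as the paper: bijectivity and density via the isomorphism $B_u$ and the identity ${\rm dom}(L_u^+)=B_u^{-1}(H_+)$, compactness of $(L_u^+)^{-1}=\iota\circ B_u^{-1}|_{H_+}$ by Rellich, symmetry of $(L_u^+)^{-1}$ from the symmetry of $A_u$ (the paper computes with $(L_u^+)^{-1}=A_u^{-1}\langle D\rangle^{-1}$ in the $H^{1/2}_+$ inner product, while you phrase it via the hermitian symmetry of $Q_u^+$, i.e. of $B_u$ in the duality pairing — the same fact), and positivity from the coercivity bound of Lemma \ref{comparison Q_u^+}. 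No gaps.
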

 \begin{proof}
 $(i)$ Since $H_+$ is a dense subspace of $H^{-1/2}_+$
 and $B_u^{-1} : H^{-1/2}_+ \to H^{1/2}_+$ 
 is a linear isomorphism, 
 ${\rm{dom}}(L_u^+) = B_u^{-1}(H_+)$ is a dense subspace
 of $H^{1/2}_+$, and hence also of $H_+$.\\
 $(ii)$ Since $L^+_u$ is the restriction of 
 the linear isomorphism $B_u$,
 it is one-to-one. By the definition of $L^+_u$, it is onto.
 The right inverse of $L_u^+$, denoted by $(L_u^+)^{-1}$, 
 is given by the composition
 $\iota \circ B_u^{-1}|_{H_+}$, where
 $\iota: H^{1/2}_+ \to H_+$ is the standard embedding
 which by Sobolev's embedding theorem is compact.
 It then follows that $(L_u^+)^{-1}: H_+ \to H_+$
 is compact as well.\\
 $(iii)$ For any $f, g \in H_+$
 $$
 \langle (L_u^+)^{-1} f | g \rangle =
 \langle A_u^{-1}\langle D \rangle ^{-1}  f | g \rangle =
 \langle A_u^{-1}\langle D  \rangle ^{-1}  f | 
 \langle D  \rangle ^{-1} g \rangle_{1/2} \, .
 $$
 By Lemma \ref{Lax Milgram}, $A_u^{-1}$ is symmetric with respect to the
 $H^{1/2}_+-$inner product. Hence 
 $$
 \langle (L_u^+)^{-1} f | g \rangle =
 \langle \langle D  \rangle ^{-1}  f | 
 A_u^{-1} \langle D \rangle ^{-1} g \rangle_{1/2}
 =  \langle f | (L_u^+)^{-1} g \rangle\, ,
 $$
 showing that $(L_u^+)^{-1}$ is symmetric.
 Since in addition, $(L_u^+)^{-1}$ is bounded
 it is also self-adjoint.
 By Lemma \ref{comparison Q_u^+} it then follows that
 $$
  \langle L_u^+ f | f \rangle = 
 \langle  \langle D \rangle A_u f | f \rangle=
 \langle A_u f | f \rangle_{1/2}=
 Q^+_u(f, f) \ge \frac{1}{2} \| f \|_{1/2}^2\, ,
 $$
 implying that $ L_u^+$ is a positive operator.
 \end{proof}
 We now define for any $ u \in H^{-s}_{r,0}$ 
 with $0 \le s < 1/2$, the operator $L_u$ 
 as a linear operator with domain 
 ${\rm{dom}}(L_u) := {\rm{dom}}(L_u^+)$
 by setting
 $$
 L_u := L_u^+ - \big( 1 + \eta_s(\|u\|_s) \big) :
 {\rm{dom}}(L_u) \to H_+ \, .
 $$
 Lemma \ref{operator L^+_u} yields the following
 \begin{corollary}\label{definition L_u}
 For any $ u \in H^{-s}_{r,0}$ with $0 \le s < 1/2$,
 the operator $L_u : {\rm{dom}}(L_u) \to H_+$
 is densely defined, self-adjoint, bounded from below,
 and has discrete spectrum. It thus admits an $L^2-$normalized
 basis of eigenfunctions, contained in ${\rm{dom}}(L_u)$
 and hence in $H^{1/2}_+$.
 \end{corollary}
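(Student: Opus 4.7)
The plan is to deduce this corollary directly from Lemma \ref{operator L^+_u} by exploiting the fact that $L_u$ differs from $L_u^+$ only by subtracting the real scalar $1+\eta_s(\|u\|_{-s})$. Since shifting a self-adjoint operator by a real constant does not alter its domain, dense-definedness, self-adjointness, or the compactness of its resolvent, all the structural properties established for $L_u^+$ transfer to $L_u$ essentially for free.

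More concretely, I would proceed in the following order. First, since $\mathrm{dom}(L_u) = \mathrm{dom}(L_u^+)$ by definition and the latter is dense in $H_+$ by Lemma \ref{operator L^+_u}(i), $L_u$ is densely defined. Second, to obtain self-adjointness, note that for any $f, g \in \mathrm{dom}(L_u)$ the identity $\langle L_u f | g \rangle = \langle L_u^+ f | g \rangle - (1+\eta_s(\|u\|_{-s})) \langle f | g \rangle$ and the self-adjointness of $L_u^+$ from Lemma \ref{operator L^+_u}(iii) give symmetry; one then checks that $\mathrm{dom}(L_u^*) = \mathrm{dom}((L_u^+)^*) = \mathrm{dom}(L_u^+) = \mathrm{dom}(L_u)$, so $L_u$ is self-adjoint. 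Third, for the lower bound, Lemma \ref{comparison Q_u^+} together with the identity $\langle L_u^+ f | f \rangle = Q_u^+(f,f)$ from the proof of Lemma \ref{operator L^+_u}(iii) yields $\langle L_u f | f \rangle \ge -(1+\eta_s(\|u\|_{-s})) \|f\|^2$, so $L_u$ is bounded from below by $-(1+\eta_s(\|u\|_{-s}))$.

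For the discreteness of the spectrum and the eigenbasis, I would use the compactness of $(L_u^+)^{-1}$ established in Lemma \ref{operator L^+_u}(ii). Since $L_u^+$ is positive and self-adjoint with compact inverse, the spectral theorem for compact self-adjoint operators applied to $(L_u^+)^{-1}: H_+ \to H_+$ produces an $L^2$-orthonormal basis $(f_n)_{n \ge 0}$ of $H_+$ consisting of eigenfunctions of $(L_u^+)^{-1}$, with eigenvalues accumulating only at $0$. Each $f_n$ lies in the range of $(L_u^+)^{-1}$, which by construction is contained in $\mathrm{dom}(L_u^+) \subset H^{1/2}_+$. These $f_n$ are simultaneously eigenfunctions of $L_u^+$, hence of $L_u = L_u^+ - (1+\eta_s(\|u\|_{-s}))$ with eigenvalues shifted by $-(1+\eta_s(\|u\|_{-s}))$; the corresponding sequence of eigenvalues tends to $+\infty$, giving the discrete spectrum assertion.

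No step here is really an obstacle, since the hard analytic work (coercivity of $Q_u^+$, Lax--Milgram, compact embedding $H^{1/2}_+ \hookrightarrow H_+$) has already been carried out in Lemmas \ref{comparison Q_u^+}--\ref{operator L^+_u}. The only mild subtlety is the bookkeeping around the domain equality $\mathrm{dom}(L_u^*) = \mathrm{dom}(L_u)$, which is immediate once one recalls that adding a bounded self-adjoint perturbation (here, a real multiple of the identity) to a self-adjoint operator preserves self-adjointness on the same domain.
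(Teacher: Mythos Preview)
Your proposal is correct and follows exactly the route the paper intends: the corollary is stated immediately after Lemma~\ref{operator L^+_u} with the remark that it ``yields'' the result, and your argument---transferring dense-definedness, self-adjointness, lower boundedness, and compact resolvent from $L_u^+$ to its real scalar shift $L_u$---is precisely the content behind that one-line deduction.
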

 \begin{remark}\label{symmetry of B_u}
 Let $ u \in H^{-s}_{r,0}$ with $0 \le s < 1/2$ be given.
 Since ${\rm{dom}}(L^+_u)$ is dense in $H^{1/2}_+$
 and $L^+_u$ is the restriction of 
 $B_u : H^{1/2}_+ \to H^{-1/2}_+$ to ${\rm{dom}}(L^+_u)$,
 the symmetry
 $$
 \langle L_u^+  f | g \rangle = \langle f | L_u^+  g \rangle 
 \, , \quad \forall f, g \in \rm{dom}(L^+_u)
 $$
 can be extended by a straightforward density argument
 as follows
 $$
  \langle B_u  f | g \rangle = \langle f | B_u  g \rangle 
 \, , \quad \forall f, g \in H^{1/2}_+ \, .
 $$
 \end{remark}
Note that for any $f, g \in H^{1/2}_+$, 
$\langle B_u  f | g \rangle =
\langle \langle D \rangle A_u f | g \rangle =
\langle A_u f | g \rangle_{1/2}$ 
and hence by \eqref{Q_u^+},
$$
\langle B_u  f | g \rangle =
Q_u^+(f, g) = 
\langle Df - T_uf + (1 + \eta_s(\|u\|_{-s}))f | g \rangle \, ,
$$ 
yielding the following identity in $H^{-1/2}_+$,
\begin{equation}\label{identity B_u}
B_u  f = Df - T_uf + (1 + \eta_s(\|u\|_{-s})) f\, ,
\quad \forall f \in H^{1/2}_+ \, .
\end{equation}
 Given $ u \in H^{-s}_{r,0}$ with $0 \le s < 1/2$, 
 let us consider the restriction of $B_u$ to $H^{1-s}_+$.
 \begin{lemma}\label{restriction of B_u to H^{1-s}}
 Let $ u \in H^{-s}_{r,0}$ with $0 \le s < 1/2$.
 Then $B_u(H^{1-s}_+) = H^{-s}_+$ and the 
 restriction
 $B_{u; 1-s}  :=B_u|_{H^{1-s}_+} : H^{1-s}_+ \to H^{-s}_+$
 is a linear isomorphism. The operator norm of
 $B_u|_{H^{1-s}_+}$ 
 and the one of its inverse are bounded uniformly
 on bounded subsets of elements $u \in H^{-s}_{r,0}$.
 \end{lemma}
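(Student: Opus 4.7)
The plan is to leverage the identity $B_u f = Df - T_u f + (1+\eta_s(\|u\|_{-s})) f$ from \eqref{identity B_u}, together with the isomorphism $B_u: H^{1/2}_+ \to H^{-1/2}_+$ of Lemma \ref{Lax Milgram}. Since $1-s > 1/2$ lies in the one-dimensional Sobolev algebra range, the analytical core reduces to controlling the Toeplitz term $T_u f$ in the spaces in question.

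First I would establish the product bound
\[
\|T_u f\|_{-s} \le C_s \|u\|_{-s}\, \|f\|_{1-s}, \qquad u \in H^{-s}_{r,0},\ f \in H^{1-s}_+,
\]
by duality: $\langle T_u f \mid g \rangle = \langle u \mid g \bar f \rangle$, combined with the fact that $H^{1-s}$, being a multiplier algebra, acts boundedly on $H^s$ yielding $\|\bar f g\|_s \le C \|f\|_{1-s}\|g\|_s$ for $|s| \le 1-s$. Together with the trivial bounds on $Df$ and $f$ themselves, this shows that $B_{u;1-s}: H^{1-s}_+ \to H^{-s}_+$ is bounded with operator norm controlled by a continuous function of $\|u\|_{-s}$.

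Injectivity is immediate from $H^{1-s}_+ \subset H^{1/2}_+$ and Lemma \ref{Lax Milgram}. For surjectivity I would argue by elliptic bootstrap: given $g \in H^{-s}_+ \subset H^{-1/2}_+$, Lemma \ref{Lax Milgram} produces a unique $f \in H^{1/2}_+$ with $B_u f = g$, and the identity $Df = g + T_u f - (1+\eta_s(\|u\|_{-s})) f$ is used to upgrade its regularity. The product estimate above is not yet applicable since $f$ lies only in $H^{1/2}$, so I would first apply Lemma \ref{estimate for T_u f}(i) with $\sigma = (1/2+s)/2$: duality gives $\|T_u f\|_{-\sigma} \le C \|u\|_{-s}\|f\|_{1/2}$, and because $s < \sigma < 1/2$, this forces $Df \in H^{-\sigma}_+$, hence $f \in H^{1-\sigma}_+$. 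Since $1-\sigma > 1/2$, a second iteration now invokes the full product estimate to conclude $T_u f \in H^{-s}_+$ and $f \in H^{1-s}_+$.

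The uniform bound on $B_{u;1-s}^{-1}$ then follows by tracking constants through both steps: $\|f\|_{1-s} \lesssim \|g\|_{-s} + (1+\|u\|_{-s})\|f\|_{1/2}$ and $\|f\|_{1/2}$ is controlled by $\|g\|_{-s}$ uniformly on bounded subsets of $H^{-s}_{r,0}$ via Lemma \ref{Lax Milgram}. The main obstacle, I expect, is the borderline nature of the first bootstrap step: since $H^{1/2}$ just fails to be a multiplier algebra in one dimension, the direct inequality $\|T_u f\|_{-s} \lesssim \|u\|_{-s}\|f\|_{1/2}$ is unavailable, and the intermediate regularity $\sigma$ provided by Lemma \ref{estimate for T_u f}(i) is exactly what is needed to bypass this endpoint obstruction.
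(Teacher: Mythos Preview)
Your proposal is correct and follows essentially the same approach as the paper: boundedness via the multiplier algebra property of $H^{1-s}$, injectivity inherited from Lemma~\ref{Lax Milgram}, and surjectivity by a two-step elliptic bootstrap through the intermediate exponent $\sigma=(1/2+s)/2$ furnished by Lemma~\ref{estimate for T_u f}(i). Your identification of the endpoint obstruction at $H^{1/2}$ and the role of $\sigma$ in bypassing it matches the paper's reasoning exactly.
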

 \begin{proof}
 Since $1-s>1/2$, $H^{1-s}$ acts by multiplication on itself and on $L^2$, hence  by interpolation on $H^r$ for $0\leq r\leq 1-s$. 
 By duality, it also acts on $H^{-r}$, in particular 
 with $r=s$. This implies that  $B_u|_{H_+^{1-s}} : H_+^{1-s} \to H_+^{-s}$ is bounded.
 Being the restriction of an injective operator, it is
 injective as well. Let us prove that $B_u|_{H_+^{1-s}}$ has
 $H_+^{-s}$ as its image. To this end consider an arbitrary
 element $h \in H_+^{-s}$. We need to show that the
 solution $f\in H^{1/2}_+$ of $B_u f = h$ 
 is actually in $H^{1-s}_+$. Write
 \begin{equation}\label{identity for f}
 Df  = h +  (1 + \eta_s(\|u\|_{-s})) f + T_uf\, .
 \end{equation}
 Note that $h +  (1 + \eta_s(\|u\|_{-s})) f$ is in $H^{-s}_+$
 and it remains to study $T_uf$.
 By Lemma \ref{estimate for T_u f}$(i)$ one infers
 that for any $g \in H^\sigma_+$, with $\sigma =(1/2+s)/2$, 
 $$
| \langle T_u f | g \rangle | = 
| \langle u  | g \overline f \rangle | \le 
\|u \|_{-s} \|g \overline f\|_{s} \le 
C_{1,s}^2 \|u\|_{-s} \| g \|_{\sigma} \| f \|_{\sigma}\, ,
 $$
 implying that $T_u f \in H^{-\sigma}_+$ and hence by
 \eqref{identity for f}, $f \in H^{1-\sigma}$.
 Since $1-\sigma >1/2$,  we argue as at the beginning of the proof 
to infer that $T_u f \in H^{-s}_+$. Thus applying
 \eqref{identity for f} once more  we conclude that 
  $f \in H^{1-s}_+$. This shows that 
  $B_u|_{H_+^{1-s}} : H_+^{1-s} \to H_+^{-s}$ is onto.
  Going through the arguments of the proof one verifies
  that the operator norm of $B_u|_{H^{1-s}_+}$ 
 and the one of its inverse are bounded uniformly
 on bounded subsets of elements $u \in H^{-s}_{r,0}$.
  This completes the proof of the lemma.
 \end{proof}
 Lemma \ref{restriction of B_u to H^{1-s}} has
 the following important 
 \begin{corollary}
 For any $ u \in H^{-s}_{r,0}$ with $0 \le s < 1/2$,
 ${\rm{dom}}(L_u^+) \subset H^{1-s}_+$. In particular,
 any eigenfunction of $L_u^+$ (and hence of $L_u$)
  is in $H^{1-s}_+$.
 \end{corollary}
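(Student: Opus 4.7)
The plan is to exploit the linear isomorphism $B_{u;1-s}: H^{1-s}_+ \to H^{-s}_+$ provided by Lemma \ref{restriction of B_u to H^{1-s}}, together with the injectivity of the full operator $B_u: H^{1/2}_+ \to H^{-1/2}_+$ established in Lemma \ref{Lax Milgram}. Concretely, I would start with an arbitrary $g \in {\rm dom}(L_u^+)$, so that $g \in H^{1/2}_+$ and $B_u g \in H_+$ by definition.

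Since $s \ge 0$, the continuous embedding $H_+ \hookrightarrow H^{-s}_+$ gives $B_u g \in H^{-s}_+$. By Lemma \ref{restriction of B_u to H^{1-s}}, the restriction $B_{u;1-s}$ is a bijection onto $H^{-s}_+$, so there is a unique $\tilde g \in H^{1-s}_+$ with $B_u \tilde g = B_u g$. Because $1-s > 1/2$, we have $H^{1-s}_+ \subset H^{1/2}_+$, and hence both $\tilde g$ and $g$ lie in $H^{1/2}_+$ and are sent by $B_u$ to the same element of $H^{-1/2}_+$. Since $B_u: H^{1/2}_+ \to H^{-1/2}_+$ is injective (Lemma \ref{Lax Milgram}), we conclude $\tilde g = g$, and therefore $g \in H^{1-s}_+$.

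The statement about eigenfunctions is then immediate: if $f$ is an eigenfunction of $L_u^+$ with eigenvalue $\mu$, then $f \in {\rm dom}(L_u^+) \subset H^{1/2}_+ \subset H_+$ and $B_u f = \mu f \in H_+$, so by the argument above $f \in H^{1-s}_+$. Since $L_u$ differs from $L_u^+$ by the scalar $1 + \eta_s(\|u\|_{-s})$, eigenfunctions of $L_u$ coincide with those of $L_u^+$ and the same conclusion applies.

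There is no real obstacle here; the corollary is essentially a bootstrap/elliptic regularity statement that reads off from the two preceding lemmas. The only subtlety to be careful about is verifying that the $H^{1/2}_+$ solution produced by Lax--Milgram coincides with the $H^{1-s}_+$ solution produced by Lemma \ref{restriction of B_u to H^{1-s}}, which is why the injectivity of $B_u$ on all of $H^{1/2}_+$ (and not merely on the image of $H^{1-s}_+$) is the crucial input.
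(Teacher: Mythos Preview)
Your proof is correct and follows essentially the same approach as the paper's. The paper's argument is the one-line observation that ${\rm dom}(L_u^+) = B_u^{-1}(H_+) \subset B_u^{-1}(H^{-s}_+) = H^{1-s}_+$; your version spells out explicitly, via the injectivity of $B_u$ on $H^{1/2}_+$, why the preimage computed with the full isomorphism $B_u: H^{1/2}_+ \to H^{-1/2}_+$ agrees with the one computed with the restriction $B_{u;1-s}$, which is a point the paper leaves implicit.
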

 \begin{proof}
 Since $H_+ \subset H^{-s}_+,$ one has 
 $B_u^{-1}(H_+) \subset B_u^{-1}(H^{-s}_+)$
 and hence by Lemma \ref{restriction of B_u to H^{1-s}},
${\rm{dom}}(L_u^+) = B_u^{-1}(H_+) 
\subset  H^{1-s}_+$.
\end{proof}
With the results obtained so far, it is straightforward 
to verify that many of the results of \cite{GK}
extend to the case where $u \in H^{-s}_{r,0}$.
More precisely, let $u \in H^{-s}_{r,0}$ with $0 \le s < 1/2$.
We already know that the spectrum of $L_u$ is discrete, 
bounded from below, and real. When listed 
in increasing order and with their multiplicities,
the eigenvalues of $L_u$ satisfy
$\lambda_0 \le \lambda_1 \le \lambda_2 \le \cdots $.
Arguing as in the proof of \cite[Proposition 2.1]{GK}, 
one verifies that $\lambda_n \ge \lambda_{n-1}+ 1$, $n \ge 1$,
and following \cite[(2.10)]{GK} we define
$$
\gamma_n(u) := \lambda_n - \lambda_{n-1} - 1 \ge 0\, .
$$
It then follows that for any $n \ge 1$,
$$
\lambda_n  = n + \lambda_0 + 
\sum_{k=1}^n\gamma_k \ge n + \lambda_0\, .
$$
Since \cite[Lemma 2.1, Lemma 2.2]{GK}
continue to hold for $u \in H^{-s}_{r,0}$, 
we can introduce eigenfunctions $f_n(x, u)$
of $L_u$, corresponding to the eigenvalues $\lambda_n$, which
are normalized as in \cite[Definition 2.1]{GK}. 
The identities \cite[(2.13)]{GK} continue to hold, 
\begin{equation}\label{formula coeff of u}
\lambda_n \langle 1 | f_n \rangle = - \langle u | f_n \rangle
\end{equation}
as does \cite[Lemma 2.4]{GK}, stating that for any $n \ge 1$
\begin{equation}\label{vanishing gamma}
\gamma_n = 0  \quad 
\text{ if and only if } \quad \langle 1 | f_n \rangle = 0 \ .
\end{equation}
Furthermore, the definition \cite[(3.1)]{GK} of the
generating function $\mathcal H_\lambda(u)$
extends to the case where $u \in H^{-s}_{r,0}$
with $0 < s < 1/2$,
\begin{equation}\label{definition generation fucntion}
 \mathcal H_\lambda: H^{-s}_{r,0} \to \C, \, u \mapsto 
\langle (L_u + \lambda)^{-1} 1 | 1 \rangle \, .
\end{equation}
and so do the identity \cite[(3.2)]{GK},
the product representation of $\mathcal H_\lambda(u)$,
stated in \cite[Proposition 3.1(i)]{GK}, 
\begin{equation}\label{generating function for -s}
\mathcal H_\lambda (u)=\frac{1}{\lambda_0+\lambda}\prod_{n=1}^\infty \big(1-\frac{\gamma_n}{\lambda_n+\lambda}   \big) \, ,
\end{equation}
and the one for $|\langle 1 | f_n \rangle |^2$,
$n \ge 1$, given in \cite[Corollary 3.1]{GK}, 
 \begin{equation}\label{product for kappa for -s}
 | \langle 1 | f_n \rangle |^2 = \gamma_n \kappa_n\, , \quad
\kappa_n = \frac{1}{\lambda_n - \lambda_0}
\prod_{p \ne n} (1 - \frac{\gamma_p}{\lambda_p - \lambda_n})
\, .
\end{equation}
The product representation 
\eqref{generating function for -s} then
yields the identity (cf. \cite[Proposition 3.1(ii)]{GK}
and its proof),
\begin{equation}\label{identity for lambda_0}
-\lambda_0(u) = \sum_{n=1}^\infty \gamma_n(u) \,.
\end{equation}
Since $\gamma_n(u) \ge 0$ for any $n \ge 1,$ one infers that
for any $u \in H^{-s}_{r,0}$ with $0 \le s < 1/2$,
the sequence
$(\gamma_n(u))_{n \ge 1}$ is in 
$\ell^1_+ \equiv \ell^1(\N , \R)$ and
\begin{equation}\label{formula for lambda_n}
\lambda_n(u) = n - \sum_{k = n+1}^\infty \gamma_k(u)
\le n\, .
\end{equation}
By \eqref{Q_u^+}, Lemma \ref{comparison Q_u^+}, and \eqref{identity B_u}, we  infer
that $- \lambda_0 \le \frac 12 + \eta_s(\|u\|_{-s})$, 
yielding, when combined with \eqref{identity for lambda_0} 
and \eqref{formula for lambda_n}, the estimate
\begin{equation}\label{sandwich lambda_n}
 n -  \frac 12 - \eta_s(\|u\|_{-s})  \le \lambda_n(u) \le n \, ,
 \quad \forall n \ge 0\, .
\end{equation}
 
In a next step we want consider the linear isomorphism 
 $$
 B_{u; 1-s} = B_u|_{H_+^{1-s}} : H^{1-s}_+ \to H^{-s}_+
 $$ 
 on the scale of Sobolev spaces. By duality,
 $B_{u; 1 - s}$ extends as a bounded linear isomorphism,
 $B_{u,s}:  H^{s}_+ \to H^{-1+s}_+ $ and hence by
 complex interpolation, for any $s \le t \le 1-s$, 
 the restriction of $B_{u,s}$ to $H^{t}_+$
 gives also rise to a bounded linear isomorphism,
 $B_{u; t}: H^{t}_+ \to H^{-1 +t}_+$.
 All these operators satisfy the same bound as $B_{u; 1 - s}$
 (cf. Lemma \ref{restriction of B_u to H^{1-s}}).
 To state our next result, it is convenient to introduce 
 the notation $\N_0 := \Z_{\ge 0}$. Recall that 
 $h^t(\N_0) = h^t(\N_0, \C)$, $t \in \R$,
 and that we write $\ell^2(\N_0)$ instead of
 $h^0(\N_0)$.
 \begin{lemma}\label{basis on Sobolev scale}
 Let $ u \in H^{-s}_{r,0}$ with $0 \le s < 1/2$
 and let $(f_n)_{n \ge 0}$ be the basis of $L^2_+,$
consisting of eigenfunctions of $L_u$ with
$f_n$, $n \ge 0$,
 corresponding to the eigenvalue $\lambda_n$ and normalized
 as in \cite[Definition 2.1]{GK}. Then for any 
 $-1 + s \le t \le 1-s,$
 $$
 K_{u;t} : H^t_+ \to h^t(\N_0)\, , \, 
 f \mapsto (\langle f | f_n \rangle)_{n \ge 0}
 $$
 is a linear isomorphism. In particular, for 
 $f = \Pi u \in H^{-s}_+$, one obtains that
 $(\langle \Pi u | f_n \rangle)_{n \ge 0} 
 \in h^{-s}(\N_0)$.
 The operator norm of $K_{u;t}$ and the one of its
 inverse can be uniformly bounded 
 for $-1 + s \le t \le 1-s$ and for  $u$ in a bounded subset of $ \in H^{-s}_{r,0}$.
 \end{lemma}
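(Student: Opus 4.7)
My plan is to prove $K_{u;t}$ is an isomorphism first at the endpoint $t = 1/2$ using the spectral theory of $L_u^+$, then extend to $t \in [-1/2, 1/2]$ by Hilbert-scale interpolation and duality, and finally bootstrap to the full range $t \in [-1+s, 1-s]$ using the isomorphism $B_u : H^{1-s}_+ \to H^{-s}_+$ from Lemma \ref{restriction of B_u to H^{1-s}}. Write $\mu_n := \lambda_n + 1 + \eta_s(\|u\|_{-s}) > 0$, so $L_u^+ f_n = \mu_n f_n$. By \eqref{sandwich lambda_n}, $\mu_n \asymp \langle n \rangle$ with constants depending only on $\|u\|_{-s}$ and $s$, uniformly on bounded subsets of $H^{-s}_{r,0}$.

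At $t = 0$, $K_{u;0}$ is an isometry onto $\ell^2(\N_0)$ by Parseval. At $t = 1/2$, Lemma \ref{comparison Q_u^+} shows that $Q_u^+$ is a closed positive quadratic form on $H^{1/2}_+$ with $Q_u^+(f,f) \asymp \|f\|_{1/2}^2$; since $\langle B_u f | g\rangle = Q_u^+(f,g)$ and $L_u^+$ is the restriction of $B_u$ (Lemma \ref{Lax Milgram}), $Q_u^+$ is the closed form of $L_u^+$, so ${\rm{dom}}((L_u^+)^{1/2}) = H^{1/2}_+$ and the spectral theorem yields $Q_u^+(f,f) = \sum_n \mu_n |\langle f | f_n\rangle|^2$ for every $f \in H^{1/2}_+$. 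Using $\mu_n \asymp \langle n\rangle$ this gives $\|f\|_{1/2}^2 \asymp \|K_{u;1/2} f\|_{h^{1/2}(\N_0)}^2$, so $K_{u;1/2}$ is an isomorphism with uniform constants. Complex interpolation between $t = 0$ and $t = 1/2$ yields $K_{u;t}$ iso for $t \in [0, 1/2]$, and duality through the extended $L^2$-pairing (whose transpose is the synthesis map $(c_n) \mapsto \sum c_n f_n$, namely the inverse of $K_{u;-t}$) extends this to $t \in [-1/2, 1/2]$. In particular $K_{u;-s}$ is already covered, since $-s \in [-1/2, 0]$.

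To reach $t \in [1/2, 1-s]$ I use the symmetry $\langle B_u f | f_n\rangle = \mu_n \langle f|f_n\rangle$ from Remark \ref{symmetry of B_u}, valid for $f \in H^{1/2}_+$ since $f_n \in H^{1-s}_+ \subset H^{1/2}_+$. For $f \in H^{1-s}_+$, Lemma \ref{restriction of B_u to H^{1-s}} gives $B_u f \in H^{-s}_+$ with $\|B_u f\|_{-s} \lesssim \|f\|_{1-s}$, and applying the already-established isomorphism $K_{u;-s}$ to $B_u f$ produces $(\mu_n \langle f|f_n\rangle)_n \in h^{-s}(\N_0)$, hence $(\langle f|f_n\rangle)_n \in h^{1-s}(\N_0)$ with the desired bound. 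Conversely, for any $(c_n) \in h^{1-s}(\N_0)$ the sequence $(\mu_n c_n) \in h^{-s}(\N_0)$ defines an element $g \in H^{-s}_+$ via $K_{u;-s}^{-1}$, and $f := B_u^{-1} g \in H^{1-s}_+$ satisfies $\mu_n \langle f|f_n\rangle = \langle B_u f|f_n\rangle = \langle g|f_n\rangle = \mu_n c_n$, so $\langle f|f_n\rangle = c_n$. Thus $K_{u;1-s}$ is an isomorphism; one more interpolation between $t = 1/2$ and $t = 1-s$ together with a duality step fills in the whole range $t \in [-1+s, 1-s]$. The claim for $f = \Pi u \in H^{-s}_+$ is the special case $t = -s$.

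The main subtlety is to keep every estimate uniform in $u$ on bounded subsets of $H^{-s}_{r,0}$ and uniform in $t$ across the interpolations. This uniformity is built into Lemma \ref{comparison Q_u^+} and Lemma \ref{restriction of B_u to H^{1-s}} (whose constants depend only on $\|u\|_{-s}$) and is preserved by complex interpolation on Hilbert scales; the rest is bookkeeping.
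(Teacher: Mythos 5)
Your proof is correct and takes essentially the same route as the paper's: the endpoint $t=1/2$ is handled through the quadratic form $Q_u^+$ (the paper makes your spectral-theorem step concrete by checking that the normalized eigenfunctions $f_n/(\lambda_n+1+\eta_s(\|u\|_{-s}))^{1/2}$ form an orthonormal basis of $(H^{1/2}_+,Q_u^+)$), then interpolation and duality give $[-1/2,1/2]$, and the same bootstrap via the isomorphism $B_{u;1-s}:H^{1-s}_+\to H^{-s}_+$ of Lemma \ref{restriction of B_u to H^{1-s}} yields $t=1-s$. The only cosmetic difference is at the last step, where the paper interpolates directly between $K_{u;-s}$ and $K_{u;1-s}$ (and between the adjoints of their inverses) instead of interpolating between $t=1/2$ and $t=1-s$ and then dualizing.
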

 \begin{proof} We claim that the sequence $(\tilde f_n)_{n\ge 0}$, defined by
 $$
 \tilde f_n=\frac{f_n}{(\lambda_n+1+\eta_s(\Vert u\Vert_{-s}))^{1/2}} \, ,
 $$
 is an orthonormal basis of the Hilbert space $H^{1/2}_+$, endowed with the inner product $Q_u^+$. 
 Indeed, for any $n \ge 0$ and any $g\in H^{1/2}_+$, one has
 $$Q_u^+(\tilde f_n, g) =\langle L_u^+\tilde f_n\vert g\rangle =(\lambda_n+1+\eta_s(\Vert u\Vert_{-s}))^{1/2}\langle f_n\vert g\rangle \ .$$
As a consequence, for any $n, m \ge 0$, $Q_u^+(\tilde f_n, \tilde f_m) = \delta_{nm}$ and 
 the orthogonal complement of the subspace of $H^{1/2}_+$, spanned by  $(\tilde f_n)_{n\ge 0}$, is the trivial vector space $\{ 0\} $,
showing that $(\tilde f_n)_{n\ge 0}$ is an orthonormal basis of $H^{1/2}_+$.
In view of \eqref{sandwich lambda_n}, we then conclude that
  $$
 K_{u;1/2} : H^{1/2}_+ \to h^{1/2}(\N_0)\, , \, 
 f \mapsto (\langle f | f_n \rangle )_{n \ge 0}
  $$ 
  is a linear isomorphism.
  Its inverse is given by
  $$
  K_{u;1/2}^{-1} : h^{1/2}(\N_0) \to  H^{1/2}_+\, , \,
  (z_n)_{n \ge 0} \mapsto f:= \sum_{n=0}^\infty z_n f_n
  \ .$$
 By interpolation we infer that 
  for any $0 \le t \le 1/2$,
  $ K_{u;t} : H^{t}_+ \to h^{t}(\N_0)$
  is a linear isomorphism. Taking the transpose of
  $K_{u;t}^{-1}$ it then follows that for any 
  $0 \le t \le 1/2$,
  $$
   K_{u;-t} : H^{-t}_+ \to h^{-t}(\N_0)\, ,\,
   f \mapsto (\langle f | f_n \rangle )_{n \ge 0}\, ,
  $$
 is also a linear isomorphism. 
 It remains to discuss the remaining range of $t,$
 stated in the lemma.
 By Lemma \ref{restriction of B_u to H^{1-s}}, 
 the restriction of $B_u^{-1}$ to $H^{-s}_+$ 
 gives rise to a linear isomorphism
 $
 B_{u; 1-s}^{-1} :  H^{-s}_+ \to  H^{1-s}_+ \,.
 $
 For any $f \in H^{-s}_+$, one then has
 $$
 B_{u; 1-s}^{-1} f  = \sum_{n=0}^\infty 
 \frac{\langle f | f_n \rangle}
 {\lambda_n + 1 + \eta_s(\|u\|_{-s})} f_n \,.
 $$
 Since by our considerations above, 
 $(\langle f | f_n \rangle )_{n \ge 0} \in h^{-s}(\N_0)$
 one concludes that the sequence
 $\big( \frac{\langle f | f_n \rangle}
 {\lambda_n + 1 + \eta_s(\|u\|_{-s})} \big)_{n \ge 0}$
 is in $h^{1-s}(\N_0)$.
 Conversely, assume that $(z_n)_{n \ge 0} \in h^{1-s}(\N_0)$.
Then 
$\big((\lambda_n + 1 + \eta_s(\|u\|_{-s}))z_n  \big)_{n \ge 0}$ 
is in $h^{-s}(\N_0)$. Hence by the considerations above
on $K_{u; -s}$, there exists $g \in H^{-s}_+$
so that
$$
\langle g | f_n \rangle = 
(\lambda_n + 1 + \eta_s(\|u\|_{-s}))z_n\, , 
\quad \forall n \ge 0\, .
$$
Hence 
$$
g = \sum_{n=0}^\infty 
z_n (\lambda_n + 1 + \eta_s(\|u\|_{-s})) f_n 
= \sum_{n=0}^\infty 
z_n B_{u} f_n
$$ 
and $f:= B_{u}^{-1}g$ is in $H^{1-s}_+$ and satisfies 
$f = \sum_{n=0}^\infty z_n f_n$. Altogether we have thus
proved that
$$
K_{u;1 -s} : H^{1 -s}_+ \to h^{1 -s}(\N_0)\, ,\,
f \mapsto (\langle f | f_n \rangle )_{n \ge 0}\, ,
  $$
is a linear isomorphism. Interpolating between
$K_{u;-s}$ and $K_{u;1 -s}$ and between the adjoints
of their inverses shows that
for any $-1 + s \le t \le 1-s,$
 $$
 K_{u;t} : H^t \to h^t(\N_0)\, , \, 
 f \mapsto (\langle f | f_n \rangle)_{n \ge 0}
 $$
 is a linear isomorphism. Going through the arguments
 of the proof one verifies that 
 the operator norm of $K_{u;t}$ and the one of its
 inverse can be uniformly bounded 
 for $-1 + s \le t \le 1-s$ and for bounded subsets
 of elements $u \in H^{-s}_{r,0}$.
 \end{proof}
 
\smallskip

With these preparations done, we can now prove
Proposition \ref{extension Phi, part 1}$(i)$.

\smallskip
 \noindent
{\em Proof of Proposition \ref{extension Phi, part 1}$(i)$. }
 Let $ u \in H^{-s}_{r,0}$ with $0 \le s < 1/2$.
 By \eqref{product for kappa for -s}, one has 
 for any $n \ge 1$, 
 $$
 | \langle 1 | f_n \rangle |^2 = \gamma_n \kappa_n\, , \qquad
\kappa_n = \frac{1}{\lambda_n - \lambda_0}
\prod_{p \ne n} (1 - \frac{\gamma_p}{\lambda_p - \lambda_n})
\, .
$$
Note that the infinite product is absolutely
convergent since the sequence $(\gamma_n(u))_{n \ge 1}$ 
is in $\ell^1_+$ (cf. \eqref{identity for lambda_0}). 
Furthermore, since
$$
1 - \frac{\gamma_p}{\lambda_p - \lambda_n}=
\frac{\lambda_{p-1} + 1 - \lambda_n}{\lambda_p - \lambda_n}
> 0 \, , \quad \forall p \ne n
$$
it follows that $\kappa_n > 0$ for any $n \ge 1$. 
Hence, the formula \cite[(4.1)]{GK} 
of the Birkhoff coordinates $\zeta_n(u)$, $n \ge 1$, 
defined for $u \in L^2_{r,0}$,
\begin{equation}\label{def coo for -s}
\zeta_n(u) = \frac{1}{\sqrt{\kappa_n(u)}} 
\langle 1 | f_n(\cdot, u) \rangle \, ,
\end{equation}
extends to $H^{-s}_{r,0}$. 
By \eqref{formula coeff of u} 
one has (cf. also \cite[(2.13)]{GK})
$$
\lambda_n \langle 1 | f_n \rangle 
= - \langle u | f_n \rangle
= - \langle \Pi u | f_n \rangle \, .
$$
 Since by Lemma \ref{basis on Sobolev scale},
 $(\langle \Pi u | f_n \rangle)_{n \ge 0} 
 \in h^{-s}(\N_0)$
and by \eqref{sandwich lambda_n}
$$
n -  \frac 12 - \eta_s(\|u\|_{-s})  \le \lambda_n(u) \le n \, , \quad \forall n \ge 0 \, ,
$$
 one concludes that 
 $$
( \langle 1 | f_n \rangle)_{n \ge 1} \in h^{1-s}_+\, , \qquad
\kappa_n^{-1/2} = \sqrt{n} + o(1)
 $$ 
 and hence $(\zeta_n(u))_{n \ge 1} \in h^{1/2-s}_+$. 
In summary, we have proved that for any $0 < s < 1/2,$
the Birkhoff map $\Phi: L^2_{r,0} \to h^{1/2}_+$ of
Theorem \ref{main result} extends to a map 
$$
H^{-s}_{r,0} \to h^{1/2 - s}_+ \, , \,
u \mapsto  (\zeta_n(u))_{n \ge 1}\, , 
$$
 which we again denote by $\Phi$. Going through the arguments
 of the proof one verifies that $\Phi$ maps bounded subsets of $H^{-s}_{r,0}$ into bounded subsets of $h^{1/2 - s}_+$ . 
 \hfill $\square$

\smallskip

To show Proposition \ref{extension Phi, part 1}$(ii)$
we first need to prove some additional auxilary results.
By \eqref{definition generation fucntion},
the generating function is defined as
$$
 \mathcal H_\lambda: H^{-s}_{r,0} \to \C, \, u \mapsto 
\langle (L_u + \lambda)^{-1} 1 | 1 \rangle \, .
$$
For any given $u \in H^{-s}_{r,0}$, $\mathcal H_\lambda(u)$
is a meromorphic function
in $\lambda \in \C$ with possible poles at the eigenvalues of 
$L_u$ and satisfies (cf.  \eqref{generating function for -s})
\begin{equation}\label{product}
 \mathcal H_\lambda (u)=\frac{1}{\lambda_0+\lambda}\prod_{n=1}^\infty \big(1-\frac{\gamma_n}{\lambda_n+\lambda}   \big)\, . 
\end{equation}

\begin{lemma}\label{gamma_n for -s}
For any $0 \le s < 1/2,$ the following holds:\\
$(i)$ For any $\lambda \in \C \setminus \R,$ 
$\mathcal H_\lambda: H^{-s}_{r,0} \to \C$ is 
sequentially weakly continuous.\\
$(ii)$ $(\sqrt{\gamma_n})_{n \ge 1} : H^{-s}_{r,0} \to 
h^{1/2 -s}_+$
is sequentially weakly continuous. 
In particular, for any $n \ge 0,$
$\lambda_n: H^{-s}_{r,0} \to \R$ is 
sequentially weakly continuous.
\end{lemma}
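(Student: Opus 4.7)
Let $(u_k)\subset H^{-s}_{r,0}$ converge weakly to $u$, so $(\|u_k\|_{-s})$ is bounded and, by Rellich's compactness theorem, $u_k\to u$ strongly in $H^{-s'}_r$ for every $s<s'<1/2$.

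For part $(i)$, fix $\lambda\in\C\setminus\R$ and set $f_k:=(L_{u_k}+\lambda)^{-1}1$, so that $\mathcal H_\lambda(u_k)=\langle f_k|1\rangle$. The plan is to show that $f_k\rightharpoonup(L_u+\lambda)^{-1}1$ weakly in $H^{1/2}_+$. Self-adjointness of $L_{u_k}$ on $L^2_+$ gives $\|f_k\|\le|\mathrm{Im}\,\lambda|^{-1}$ and $\|L_{u_k}f_k\|=\|1-\lambda f_k\|\le 1+|\lambda|/|\mathrm{Im}\,\lambda|$; combining this with Lemma \ref{comparison Q_u^+} through $\tfrac12\|f_k\|_{1/2}^2\le Q^+_{u_k}(f_k,f_k)=\langle(L_{u_k}+1+\eta_s(\|u_k\|_{-s}))f_k|f_k\rangle$ yields a uniform bound on $\|f_k\|_{1/2}$. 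Extract $f_{k_j}\rightharpoonup f$ weakly in $H^{1/2}_+$, and hence, by Rellich, strongly in $H^\sigma_+$ with $\sigma:=(1/2+s)/2$. Test the identity $Df_{k_j}-T_{u_{k_j}}f_{k_j}+\lambda f_{k_j}=1$ against an arbitrary $\phi\in H^{1/2}_+$: the linear terms pass by weak convergence, whereas for the Toeplitz term we split
$$
\langle u_{k_j}|\phi\overline{f_{k_j}}\rangle-\langle u|\phi\overline f\rangle=\langle u_{k_j}|\phi(\overline{f_{k_j}}-\overline f)\rangle+\langle u_{k_j}-u|\phi\overline f\rangle.
$$
The first piece is $O(\|u_{k_j}\|_{-s}\|\phi\|_\sigma\|f_{k_j}-f\|_\sigma)$ by \eqref{para} and so vanishes, while the second vanishes by weak convergence of $u_k$ tested against the fixed element $\phi\overline f\in H^s_r$. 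Hence $(L_u+\lambda)f=1$, so $f=(L_u+\lambda)^{-1}1$; uniqueness of the limit upgrades subsequential convergence to the whole sequence, and pairing with $1$ gives $\mathcal H_\lambda(u_k)\to\mathcal H_\lambda(u)$.

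For part $(ii)$, the plan is to promote weak convergence of $u_k$ in $H^{-s}_{r,0}$ to norm resolvent convergence of $L_{u_k}$. Fix $s<s'<1/2$, set $\sigma':=(1/2+s')/2$, and apply \eqref{para} with $s'$ in place of $s$ to obtain $\|T_{u_k}-T_u\|_{H^{\sigma'}_+\to H^{-\sigma'}_+}\le C\|u_k-u\|_{-s'}\to 0$. The quadratic form computation sketched above shows that $(L_{u_k}+i)^{-1}:L^2_+\to H^{1/2}_+$ is uniformly bounded on bounded subsets of $H^{-s}_{r,0}$, and by duality so is $(L_u+i)^{-1}:H^{-\sigma'}_+\to L^2_+$. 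The resolvent identity
$$
(L_{u_k}+i)^{-1}-(L_u+i)^{-1}=(L_u+i)^{-1}(T_{u_k}-T_u)(L_{u_k}+i)^{-1}
$$
then yields $\|(L_{u_k}+i)^{-1}-(L_u+i)^{-1}\|_{B(L^2_+)}\to 0$. Since the eigenvalues of $L_u$ are simple and separated by gaps $\ge 1$, norm resolvent convergence implies $\lambda_n(u_k)\to\lambda_n(u)$ for every $n\ge 0$, hence $\gamma_n(u_k)=\lambda_n(u_k)-\lambda_{n-1}(u_k)-1\to\gamma_n(u)$ and $\sqrt{\gamma_n(u_k)}\to\sqrt{\gamma_n(u)}$ coordinate-wise. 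By Proposition \ref{extension Phi, part 1}$(i)$ together with $|\zeta_n|^2=\gamma_n$, the sequence $(\sqrt{\gamma_n(u_k)})_{n\ge 1}$ is uniformly bounded in $h^{1/2-s}_+$; a bounded sequence in a separable Hilbert space that converges coordinate-wise converges weakly, which completes the proof. The main technical obstacle throughout is the Toeplitz term $T_{u_k}f_k$, for which only weak $H^{-s}$ convergence of $u_k$ is available; it is resolved in both parts by combining the paramultiplication estimate of Lemma \ref{estimate for T_u f} with Rellich-based strong convergence in a slightly weaker space.
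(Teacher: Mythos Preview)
Your argument is correct. For part $(i)$ both you and the paper extract a weakly convergent subsequence of $(L_{u_k}+\lambda)^{-1}1$ and pass to the limit using Rellich compactness, so the structure is the same; the only difference is cosmetic—you work directly with a fixed $\lambda\in\C\setminus\R$ and use the self-adjoint resolvent bound $\|f_k\|\le|\mathrm{Im}\,\lambda|^{-1}$, whereas the paper first treats real $\lambda$ in a half-line $[c,\infty)$ (using the $H^{1-s}_+\to H^{-s}_+$ isomorphism of Lemma~\ref{restriction of B_u to H^{1-s}}) and then reaches arbitrary nonreal $\lambda$ by analytic continuation of the meromorphic function $\lambda\mapsto\mathcal H_\lambda$.

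Part $(ii)$ is where the approaches genuinely diverge. The paper deduces convergence of the $\lambda_n$'s \emph{from} part $(i)$: it passes to a subsequence along which $\sqrt{\gamma_n(u^{(k)})}\rightharpoonup\sqrt{\rho_n}$, forms candidate limits $\nu_n=n-\sum_{p>n}\rho_p$, and then matches the zeros and poles of the product representation~\eqref{generating function for -s} against those of the limiting $\mathcal H_\lambda(u)$ to force $\nu_n=\lambda_n(u)$. You instead bypass the generating function entirely and go through norm resolvent convergence: the key observation that weak convergence in $H^{-s}_{r,0}$ upgrades to strong convergence in $H^{-s'}_{r,0}$ for any $s'\in(s,1/2)$ lets you estimate $T_{u_k}-T_u$ in operator norm, and the uniform eigenvalue spacing $\lambda_n-\lambda_{n-1}\ge 1$ then makes eigenvalue convergence immediate. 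Your route is more elementary and self-contained (indeed your part $(ii)$ does not use your part $(i)$ at all), while the paper's argument showcases the product formula and stays within the generating-function framework used elsewhere in the article. One small remark: the duality you invoke really gives boundedness of $(L_u-i)^{-1}:H^{-\sigma'}_+\to L^2_+$ rather than $(L_u+i)^{-1}$, but this is harmless since the same quadratic-form bound applies with $\lambda=-i$.
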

\begin{proof}
$(i)$ 
Let $(u^{(k)})_{k \ge 1}$ be a sequence in 
$H^{-s}_{r,0}$ with
$u^{(k)}\rightharpoonup u$ weakly in $H^{-s}_{r,0}$ 
as $k \to \infty$. By the definition of $\zeta_n(u)$
(cf. \eqref{product for kappa for -s} -- 
\eqref{def coo for -s})
one has $| \zeta_n(u) |^2 = \gamma_n(u)$.
Since by 
Proposition \ref{extension Phi, part 1}$(i)$,
$\Phi$ maps bounded subsets of $H^{-s}_{r,0}$
to bounded subsets of $h^{1/2 -s}_+$,
there exists $M > 0$ so that for any $k \ge 1$
$$
\|u\|, \,  \|u^{(k)}\| \le M\, , \qquad
\sum_{n=1}^\infty n^{1-2s} \gamma_n(u), \,\,
\sum_{n=1}^\infty n^{1-2s} \gamma_n(u^{(k)}) \le M\, .
$$
By passing to a subsequence, if needed, we may assume that
\begin{equation}\label{lim gamma_n for -s}
\big( \gamma_n(u^{(k)})^{1/2}\big)_{n \ge 1}
\rightharpoonup (\rho_n^{1/2})_{n \ge 1}
\end{equation}
weakly in $h^{1/2 -s}(\N, \R)$ 
where $\rho_n \ge 0$ for any $n \ge 1$.
It then follows that 
$\big(\gamma_n(u^{(k)})\big)_{n \ge 1}
\to (\rho_n)_{n \ge 1}$
 strongly in $\ell^1(\N, \R)$. Define
 $$
 \nu_n:= n - \sum_{p = n+1}^{\infty} \rho_p\,, \quad
 \forall n \ge 0\, .
 $$
Then for any $n \ge 1$, $\nu_n = \nu_{n-1} + 1 + \rho_n$
and $\lambda_n(u^{(k)}) \to \nu_n$ uniformly in $n \ge 0.$
Since $L_{u^{(k)}} \ge \lambda_0(u^{(k)})$
we infer that there exists $c > |-\nu_0 + 1|$
so that for any $k \ge 1$ and $\lambda \ge c$,
$$
L_{u^{(k)}} + \lambda : H^{1-s}_+ \to H^{-s}_+
$$
is a linear isomorphism whose inverse is bounded uniformly in $k$. Therefore
$$
w_\lambda^{(k)}:= 
\big(L_{u^{(k)}} + \lambda  \big)^{-1}[1]\, , \quad
\forall k \ge 1\, ,
$$
is a well-defined, bounded sequence in $H^{1-s}_+.$
Let us choose an arbitrary countable subset 
$\Lambda \subset [c, \infty)$ with one cluster point. By a diagonal procedure, we extract a subsequence
of $(w_\lambda^{(k)})_{k \ge 1}$, again denoted by
$(w_\lambda^{(k)})_{k \ge 1}$, so that for every 
$\lambda \in \Lambda,$ the sequence $(w_\lambda^{(k)})$
converges weakly in $H^{1-s}_+$ to some element 
$v_\lambda \in H^{1-s}_+$. By Rellich's theorem 
$$
\big(L_{u^{(k)}} + \lambda  \big) w_\lambda^{(k)}
\rightharpoonup 
\big(L_{u} + \lambda  \big) v_{\lambda}
$$
weakly in $H^{-s}_+$ as $k \to \infty.$
Since by definition, 
$\big(L_{u^{(k)}} + \lambda  \big) 
w_\lambda^{(k)} = 1$ for any $k \ge 1,$ it follows that
for any $\lambda \in \Lambda,$
$\big(L_{u} + \lambda  \big) 
v_\lambda = 1$ and thus by the definition of 
the generating function
$$
\mathcal H_\lambda (u^{(k)}) =
\langle w_\lambda^{(k)} | 1 \rangle \to
\langle v_\lambda  | 1 \rangle = 
\mathcal H_\lambda (u)\, , \quad
\forall \lambda \in \Lambda \, .
$$
Since $\mathcal H_\lambda (u^{(k)}) $ and 
$\mathcal H_\lambda (u)$ are meromorphic functions
whose poles are on the real axis, it follows that
the convergence holds for any 
$\lambda \in \C \setminus \R$. This proves
item $(i)$. \\
$(ii)$ We apply item $(i)$ (and its proof) as follows.
As mentioned above, 
$\lambda_n(u^{(k)}) \to \rho_n$, uniformly in $n \ge 0$.
By the proof of item $(i)$ one has for any $c \le \lambda < \infty,$
$$
\mathcal H_\lambda (u^{(k)})  \to 
\frac{1}{\nu_0+\lambda}\prod_{n=1}^\infty \big(1-\frac{\rho_n}{\nu_n+\lambda}   \big)\,  
$$
and we conclude that for any $\lambda \in \Lambda$
$$
\frac{1}{\lambda_0(u)+\lambda}\prod_{n=1}^\infty 
\big(1-\frac{\gamma_n(u)}{\lambda_n(u) +\lambda}   \big)
= \mathcal H_\lambda (u) =
\frac{1}{\nu_0+\lambda}\prod_{n=1}^\infty \big(1-\frac{\rho_n}{\nu_n+\lambda}   \big) .
$$
Since $\mathcal H_\lambda (u)$ and
infinite product are meromorphic functions
in $\lambda$, the functions are equal. In particular,
they have the same zeroes and the same poles.
Since the sequences 
$(\lambda_n(u))_{n \ge 0}$ and 
$(\nu_n(u))_{n \ge 0}$
are both listed in increasing order it follows that 
$\lambda_n(u) = \nu_n$ for any $n \ge 0$,
implying that for any $n \ge 1$, 
$$
\gamma_n(u) = 
\lambda_n(u) - \lambda_{n-1}(u) -1 = 
\nu_n - \nu_{n-1} -1 = \rho_n\, .
$$
By \eqref{lim gamma_n for -s} we then conclude that
$$
\big( \gamma_n(u^{(k)})^{1/2}\big)_{n \ge 1}
\rightharpoonup (\gamma_n(u)^{1/2})_{n \ge 1}
$$
weakly in $h^{1/2 -s}(\N, \R)$. 
\end{proof}

\begin{corollary}\label{kappa_n for neg s}
For any $0 \le s < 1/2$ and $n \ge 1$, the functional 
$\kappa_n : H^{-s}_{r,0} \to \R$,
introduced in \eqref{product for kappa for -s},
is sequentially weakly continuous.
\end{corollary}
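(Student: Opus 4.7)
The plan is to exploit the explicit product representation \eqref{product for kappa for -s}
$$
\kappa_n(u) = \frac{1}{\lambda_n(u) - \lambda_0(u)} \prod_{p \ne n}\Big(1 - \frac{\gamma_p(u)}{\lambda_p(u) - \lambda_n(u)}\Big)
$$
and pass to the limit along any weakly convergent sequence. Concretely, suppose $u^{(k)} \rightharpoonup u$ in $H^{-s}_{r,0}$. By Proposition \ref{extension Phi, part 1}$(i)$, the sequence is bounded in $h^{1/2-s}_+$ under $\Phi$, so by \eqref{identity for lambda_0}, \eqref{formula for lambda_n}, and \eqref{sandwich lambda_n} there is a uniform bound $\sum_{p\ge 1}\gamma_p(u^{(k)}) \le M$ and $\lambda_m(u^{(k)}) \to \lambda_m(u)$ uniformly in $m\ge 0$ (the key pointwise statement coming from Lemma \ref{gamma_n for -s}$(ii)$, combined with the fact that the limiting values $\nu_m$ identified in its proof satisfy $\nu_m = \lambda_m(u)$). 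Moreover $\gamma_m(u^{(k)}) \to \gamma_m(u)$ for each $m\ge 1$.

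Given these inputs, the prefactor $1/(\lambda_n(u^{(k)}) - \lambda_0(u^{(k)}))$ converges to $1/(\lambda_n(u)-\lambda_0(u))$, using that $\lambda_n - \lambda_0 \ge n \ge 1$. For the infinite product one splits it at some threshold $N$ into $\prod_{0 < |p-n|\le N}$ and $\prod_{|p-n|> N}$. The finite product depends continuously on the finitely many quantities $\gamma_p(u^{(k)}),\lambda_p(u^{(k)})$, all of which converge, hence the finite product converges to its analogue for $u$.

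The main step is to show that the tail is uniformly close to $1$ as $N\to\infty$, independently of $k$. The key estimate is that for $p \ne n$
$$
\Big|\frac{\gamma_p}{\lambda_p - \lambda_n}\Big| \le \frac{\gamma_p}{|p - n|},
$$
which follows from $|\lambda_p-\lambda_n|\ge |p-n|$ (consecutive eigenvalues being separated by at least one, as recorded before \eqref{formula coeff of u}). Choosing $N \ge 2M$, each tail factor of $u^{(k)}$ differs from $1$ by at most $\gamma_p(u^{(k)})/N \le 1/2$, so by the elementary bound $|\log(1-x)|\le 2|x|$ for $|x|\le 1/2$ one obtains
$$
\Big|\log \prod_{|p-n|>N}\Big(1-\frac{\gamma_p(u^{(k)})}{\lambda_p(u^{(k)})-\lambda_n(u^{(k)})}\Big)\Big| \le \frac{2}{N}\sum_{|p-n|>N}\gamma_p(u^{(k)}) \le \frac{2M}{N},
$$
and the identical bound holds with $u$ in place of $u^{(k)}$. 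Hence the tail products converge uniformly in $k$ to $1$ as $N\to\infty$, while the (positive) principal factors are bounded away from $0$ uniformly in $k$ because each equals $(\lambda_{p-1}-\lambda_n+1)/(\lambda_p-\lambda_n)$ with numerator nonvanishing. A standard $\e/3$ argument combining the finite-part convergence with the tail estimate then yields $\kappa_n(u^{(k)})\to\kappa_n(u)$, which is the desired sequential weak continuity. The only delicate point is this tail control, and it is handled precisely by the spectral gap estimate $|\lambda_p - \lambda_n|\ge|p-n|$ together with the uniform $\ell^1$ bound on the gaps already used in Lemma \ref{gamma_n for -s}.
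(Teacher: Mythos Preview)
Your proof is correct and follows essentially the same approach as the paper's: both use the product formula \eqref{product for kappa for -s}, the pointwise convergence of the $\lambda_m$ and $\gamma_m$ from Lemma \ref{gamma_n for -s}, and the uniform $\ell^1$ control on the gaps to handle the infinite product. The paper is slightly more terse---it derives the uniform convergence $\lambda_p(u^{(k)})-\lambda_n(u^{(k)})\to\lambda_p(u)-\lambda_n(u)$ (uniformly in $p,n$) directly from \eqref{formula for lambda_n} and then simply invokes the product formula---whereas you make the tail estimate explicit via the split at $N$ and the logarithm bound; but the underlying ingredients (in particular the spectral gap $|\lambda_p-\lambda_n|\ge|p-n|$ and the bound $\sum_p\gamma_p(u^{(k)})\le M$) are the same.
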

\begin{proof}
Let $(u^{(k)})_{k \ge 1}$ be a sequence in 
$H^{-s}_{r,0}$ with
$u^{(k)}\rightharpoonup u$ weakly in $H^{-s}_{r,0}$ 
as $k \to \infty$. By \eqref{formula for lambda_n},
one has for any $p < n,$
$$
\lambda_p(u^{(k)}) - \lambda_n(u^{(k)}) =
p-n - \sum_{j = p+1}^n \gamma_j(u^{(k)})
$$
whereas for $p > n$
$$
\lambda_p(u^{(k)}) - \lambda_n(u^{(k)}) =
p-n + \sum_{j = n+1}^p \gamma_j(u^{(k)})\, .
$$
By Lemma \ref{gamma_n for -s}, one then concludes that
$$
\lim_{k \to \infty} \big( \lambda_p(u^{(k)}) - \lambda_n(u^{(k)})\big) - 
\big( \lambda_p(u) - \lambda_n(u)\big) =0
$$
uniformly in $p, n \ge 0$. By the product formula 
\eqref{product for kappa for -s}
for $\kappa_n$ it then follows that for any $n \ge 1$, one has
$\lim_{k \to \infty} \kappa_n(u^{(k)}) = \kappa_n(u)$.
\end{proof} 

Furthermore, we need to prove the following lemma
concerning the eigenfunctions 
$f_n(\cdot, u)$, $n \ge 0$, of $L_u$.
\begin{lemma}\label{f_n bounded for s neg}
Given $0 \le s < 1/2$, $M > 0$ and $n \ge 0,$
there exists a constant $C_{s, M, n} \ge 1$ so that for any 
$u \in H^{-s}_{r,0}$ with $\|u \|_{-s} \le M$ and any $n \ge 0$
\begin{equation}\label{estimate f_n final}
\| f_n(\cdot , u)) \|_{1-s} \le C_{s, M, n} \, .
\end{equation}
\end{lemma}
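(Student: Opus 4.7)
\smallskip
\noindent
\textbf{Proof plan for Lemma \ref{f_n bounded for s neg}.} The plan is to exploit directly the eigenvalue equation $L_u f_n = \lambda_n f_n$, recast in terms of the shifted operator $B_u = L_u + 1 + \eta_s(\|u\|_{-s})$ that was studied in Lemma \ref{restriction of B_u to H^{1-s}}, together with the uniform upper bound $\lambda_n \le n$ from \eqref{sandwich lambda_n} and the $L^2$-normalization $\|f_n\|_0 = 1$. The whole point is that all relevant bounds have already been established with uniform control over bounded subsets of $u \in H^{-s}_{r,0}$.

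\smallskip
\noindent
First I would recall that by the corollary following Lemma \ref{restriction of B_u to H^{1-s}}, every eigenfunction $f_n$ belongs to $H^{1-s}_+$, so the quantity $\|f_n\|_{1-s}$ is a priori finite; the task is to bound it quantitatively. Applying $B_u$ to $f_n$ yields the identity, valid in $H^{-s}_+$,
\begin{equation*}
B_u f_n = \bigl( \lambda_n + 1 + \eta_s(\|u\|_{-s}) \bigr) f_n .
\end{equation*}
Using that $B_u^{-1} : H^{-s}_+ \to H^{1-s}_+$ is a bounded linear isomorphism whose operator norm admits, by Lemma \ref{restriction of B_u to H^{1-s}}, a uniform bound $C(s, M)$ for $\|u\|_{-s} \le M$, we obtain
\begin{equation*}
\|f_n\|_{1-s} \;=\; \bigl( \lambda_n + 1 + \eta_s(\|u\|_{-s}) \bigr) \, \|B_u^{-1} f_n\|_{1-s} \;\le\; C(s, M) \, \bigl( \lambda_n + 1 + \eta_s(M) \bigr) \, \|f_n\|_{-s} .
\end{equation*}
Finally, $\|f_n\|_{-s} \le \|f_n\|_0 = 1$ by the normalization, and $\lambda_n \le n$ by \eqref{sandwich lambda_n}. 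Combining these yields the claim with the explicit choice $C_{s,M,n} := C(s,M)\,(n + 1 + \eta_s(M))$.

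\smallskip
\noindent
I do not anticipate any real obstacle here: the construction of $B_u$, the uniformity of its inverse on bounded subsets of $H^{-s}_{r,0}$, and the eigenvalue bound $\lambda_n \le n$ are already in hand. The only point that deserves a brief mention is that $f_n$ genuinely lies in $\mathrm{dom}(L_u^+) = B_u^{-1}(H_+)$ (a fact inherited from Corollary \ref{definition L_u} together with the corollary to Lemma \ref{restriction of B_u to H^{1-s}}), which justifies the manipulation $f_n = B_u^{-1}(B_u f_n)$ taking place entirely in $H^{1-s}_+$. Note that the dependence of $C_{s,M,n}$ on $n$ is linear, which will later be consistent with the trace-formula type sums one expects to control on the scale $h^{1/2 - s}_+$.
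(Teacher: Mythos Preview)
Your argument is correct and in fact somewhat cleaner than the paper's. The paper works directly with the eigenvalue equation in the form
\[
-i\partial_x f_n = \lambda_n f_n + T_u f_n
\]
and estimates the right-hand side in $H^{-s}_+$; the term $\|T_u f_n\|_{-s}$ is first bounded by $C_s\|u\|_{-s}\|f_n\|_{1-\sigma}$ (with $\sigma=(1/2+s)/2$), and then an interpolation/Young step is used to absorb the $\|f_n\|_{1-\sigma}$ contribution into the left-hand side. This amounts to redoing, in miniature, the bootstrap that was already carried out in the proof of Lemma~\ref{restriction of B_u to H^{1-s}}.

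You instead invoke Lemma~\ref{restriction of B_u to H^{1-s}} directly: since $B_u^{-1}:H^{-s}_+\to H^{1-s}_+$ is already known to be bounded uniformly on $\{\|u\|_{-s}\le M\}$, the single line
\[
\|f_n\|_{1-s}=(\lambda_n+1+\eta_s(\|u\|_{-s}))\,\|B_u^{-1}f_n\|_{1-s}\le C(s,M)\,(n+1+\eta_s(M))\,\|f_n\|_{-s}
\]
closes the estimate immediately. This buys a shorter proof and makes the dependence on previously established machinery more transparent; the paper's route is more self-contained but duplicates work. Both yield a constant growing linearly in $n$.
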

\begin{proof}
By the normalisation of $f_n$,
 $\|f_n\| = 1$. Since $f_n$ is an eigenfunction, corresponding
 to the eigenvalue $\lambda_n,$ one has
 $$
  -i\partial_x f_n = L_u f_n + T_u f_n = 
 \lambda_n f_n + T_u f_n \, ,
 $$
implying that
\begin{equation}\label{estimate of f_n for neg s}
\| \partial_x f_n \|_{-s} \le |\lambda_n| + \|T_uf_n\|_{-s}
\, .
\end{equation}
Note that by the estimates \eqref{sandwich lambda_n}, 
\begin{equation}\label{estimate | lambda_n |}
|\lambda_n| \le \max\{ n , |\lambda_0| \}
\le n + |\lambda_0|\, , \quad
|\lambda_0 | \le 1+ \eta_s(\| u \|_{-s}) \, 
\end{equation}
where $\eta_s(\| u \|_{-s})$ is given by \eqref{def eta_s}.
Furthermore, since $\sigma = (1/2 +s)/2$ (cf. \eqref{para}) one has
$1-s>1- \sigma >1/2$, implying that $H_+^{1-\sigma}$ acts on $H_+^t$ 
for every $t$ in the open interval $(-(1-\sigma),1-\sigma)$. Hence 
\begin{equation}\label{estimate T_u f_n}
\| T_u f_n\|_{-s} \le C_{s}
\|u \|_{-s} \| f_n \|_{1-\sigma} \, .
\end{equation}
Using interpolation and Young's inequality
(cf. \eqref{interpolation}, \eqref{Young}),
\eqref{estimate T_u f_n} yields an estimate, which together
with \eqref{estimate of f_n for neg s} and 
\eqref{estimate | lambda_n |} leads to the claimed estimate 
\eqref{estimate f_n final}.
\end{proof}
 
With these preparations done, we can now prove
Proposition \ref{extension Phi, part 1}$(ii)$.

 \smallskip
 
 \noindent
{\em Proof of Proposition \ref{extension Phi, part 1}$(ii)$.}
 First we prove that for any $0 \le s < 1/2$,
$\Phi: H^{-s}_{r,0} \to h^{1/2 - s}_+$
is sequentially weakly continuous:
assume that $(u^{(k)})_{k \ge 1}$ is a sequence in 
$H^{-s}_{r,0}$ with
$u^{(k)} \rightharpoonup u$ weakly in $H^{-s}_{r,0}$ 
as $k \to \infty$. Let 
$\zeta^{(k)}:= \Phi(u^{(k)})$ and $\zeta := \Phi(u)$.
Since $(u^{(k)})_{k \ge 1}$ is bounded
in $H^{-s}_{r,0}$
and $\Phi$ maps bounded subsets of $H^{-s}_{r,0}$
to bounded subsets of $h^{1/2 -s}_+$,
the sequence $(\zeta^{(k)})_{k \ge 1}$ is bounded in 
$h^{1/2 - s}_+$. To show that 
$\zeta^{(k)}\rightharpoonup \zeta$ weakly in $h^{1/2}_+$,
it then suffices to prove that for any $n \ge 1,$
$\lim_{k \to \infty} \zeta^{(k)}_n = \zeta_n$.
By the definition of the Birkhoff coordinates 
\eqref{def coo for -s},
$\zeta^{(k)}_n =
\langle 1 | f_n^{(k)} \rangle / (\kappa_n^{(k)})^{1/2}$ 
where
$\kappa_n^{(k)}:= \kappa_n(u^{(k)})$ and 
$f_n^{(k)} := f_n(\cdot, u^{(k)})$.
By Corollary \ref{kappa_n for neg s},
$\lim_{k \to \infty}\kappa_n^{(k)} = \kappa_n$
and by Lemma \ref{f_n bounded for s neg},
saying that for any $n \ge 0$,  $\|f_n\|_{1-s}$
is uniformly bounded on bounded subsets of $H^{-s}_{r,0}$,
$\lim_{k \to \infty}\langle 1 | f_n^{(k)} \rangle =
\langle 1 | f_n \rangle$
where $\kappa_n := \kappa_n(u)$ and
$f_n := f_n(\cdot, u)$. This implies that 
$\lim_{k \to \infty} \zeta^{(k)}_n = \zeta_n$
for any $n \ge 1$.\\
It remains to show that for any $0 < s < 1/2,$
$\Phi : H^{-s}_{r,0} \to h^{1/2-s}_+$
is one-to-one. In the case where
$u \in L^2_{r,0}$,  it was verified
in the proof of \cite[Proposition 4.2]{GK}
that the Fourier coefficients $\widehat u(k)$, $k\ge 1$,
of $u$ can be explicitly expressed in terms of the components 
$\zeta_n(u)$ of the sequence $\zeta(u) = \Phi(u)$.
These formulas continue to hold for $u \in H^{-s}_{r,0}$.
This completes the proof of 
Proposition \ref{extension Phi, part 1}$(ii)$.
\hfill $\square$

\medskip

\section[extension of Birkhoff map. Part 2] {Extension of $\Phi$. Part 2}\label{extension of Phi, part 2}

In this section we prove the second part of 
Theorem \ref{extension Phi}, which we again state as a separate proposition.

\begin{proposition}\label{extension Phi, part 2}
{\sc (Extension of $\Phi$. Part 2)}
For any $0 < s < 1/2,$ the map 
$\Phi: H^{-s}_{r,0} \to h^{1/2-s}_+$  
has the following additional properties:\\
$(i)$ The inverse image of $\Phi$ of any
bounded subset of $h^{1/2-s}_+$ is a bounded
subset in 
$H^{-s}_{r,0}$. \\
$(ii)$ $\Phi$ is onto and the inverse map
$\Phi^{-1}: h^{1/2-s}_+ \to H^{-s}_{r,0}$,
is sequentially weakly continuous.\\
$(iii)$
For any $0 < s < 1/2,$ the Birkhoff map 
$\Phi: H^{-s}_{r,0} \to h^{1/2-s}_+$  
and its inverse 
$\Phi^{-1}: h^{1/2-s}_+ \to H^{-s}_{r,0}$,
are continuous. 
\end{proposition}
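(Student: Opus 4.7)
The plan is to establish the three parts in turn, building on Proposition~\ref{extension Phi, part 1} (weak sequential continuity, injectivity, boundedness-to-boundedness of $\Phi$) and on the spectral identities assembled in Section~\ref{extension of Phi, part 1}.

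For part $(i)$, the aim is to construct an increasing function $F_s$ so that $\|u\|_{-s} \le F_s(\|\Phi(u)\|_{1/2-s})$. Combining \eqref{formula coeff of u} with Lemma~\ref{basis on Sobolev scale} at regularity $-s$ bounds $\|\Pi u\|_{-s}$ by $\|(\lambda_n \langle 1 | f_n \rangle)_{n \ge 0}\|_{h^{-s}(\N_0)}$ up to a constant that depends polynomially on $\|u\|_{-s}$ via $\eta_s$. The identities $|\langle 1 | f_n \rangle|^2 = |\zeta_n|^2 \kappa_n$ from \eqref{product for kappa for -s}, the sandwich \eqref{sandwich lambda_n}, and the product formula for $\kappa_n$ (which behaves like $1/n$ uniformly on bounded sets) then reduce the right-hand side to a bound involving $\sum_n n^{1-2s}|\zeta_n|^2 = \|\Phi(u)\|_{1/2-s}^2$, again multiplied by a polynomial in $\|u\|_{-s}$. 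Since this $\|u\|_{-s}$-dependence enters only through lower-order factors, the resulting implicit inequality can be resolved by an elementary bootstrap.

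For part $(ii)$, surjectivity follows from $(i)$ by approximating $\zeta \in h^{1/2-s}_+$ with the truncations $\zeta^{(N)}:=(\zeta_1,\dots,\zeta_N,0,0,\dots) \in h^{1/2}_+$ and setting $u^{(N)}:=\Phi^{-1}(\zeta^{(N)}) \in L^2_{r,0} \subset H^{-s}_{r,0}$ via Theorem~\ref{main result}. The uniform bound $\|u^{(N)}\|_{-s} \le F_s(\|\zeta\|_{1/2-s})$ yields a weakly convergent subsequence $u^{(N_j)} \rightharpoonup u$, and weak continuity of $\Phi$ forces $\Phi(u) = \zeta$. Weak sequential continuity of $\Phi^{-1}$ is then standard: if $\zeta^{(k)}\rightharpoonup \zeta$ in $h^{1/2-s}_+$, boundedness combined with $(i)$ gives boundedness of $u^{(k)}:=\Phi^{-1}(\zeta^{(k)})$ in $H^{-s}_{r,0}$, and every weak subsequential limit coincides with $\Phi^{-1}(\zeta)$ by weak continuity and injectivity of $\Phi$.

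Part $(iii)$ is where the main difficulty will lie. Given $u^{(k)} \to u$ strongly in $H^{-s}_{r,0}$, part $(ii)$ already gives $\Phi(u^{(k)}) \rightharpoonup \Phi(u)$ in the Hilbert space $h^{1/2-s}_+$, so strong convergence reduces to the norm convergence $\|\Phi(u^{(k)})\|_{1/2-s}\to \|\Phi(u)\|_{1/2-s}$. The low-frequency part is handled by the pointwise convergence $\zeta_n(u^{(k)}) \to \zeta_n(u)$ established during the proof of Proposition~\ref{extension Phi, part 1}$(ii)$. The delicate step is uniform tail smallness $\sum_{n > N} n^{1-2s} |\zeta_n(u^{(k)})|^2 \to 0$ as $N \to \infty$, uniformly in $k$. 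I plan to obtain this by approximating each $u^{(k)}$ by finite gap potentials, transferring the $L^2$-trace identity \eqref{trace formula} through the quantitative bound of part~$(i)$, and exploiting the strong $H^{-s}$-convergence of $(u^{(k)})$ to close a diagonal argument; the symmetric reasoning, starting from a sequence converging strongly in $h^{1/2-s}_+$, yields continuity of $\Phi^{-1}$. The hard part is precisely producing this uniform tail control, since \eqref{trace formula} is an $L^2$-statement and must be transferred to the $H^{-s}$-setting without loss of sharpness.
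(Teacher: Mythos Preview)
Your plan for parts $(i)$ and $(ii)$ is essentially correct and matches the paper's argument. One caveat on $(i)$: the paper does not invoke Lemma~\ref{basis on Sobolev scale} directly but instead works with the square root $R_u := (L_u - \lambda_0 + 1)^{1/2}$ and complex interpolation to obtain $\|\Pi u\|_{-s}^2 \le M_u^{2s}\,\Sigma$, where $M_u = C_{1,s}^2\|u\|_{-s} + (2 - \lambda_0)$ is \emph{linear} in $\|u\|_{-s}$ and $\Sigma$ depends only on the $\gamma_n$'s. The specific exponent $2s < 1$ is what makes Young's inequality absorb the $\|u\|_{-s}$-dependence. If you go via Lemma~\ref{basis on Sobolev scale}, you must track the growth of $\|K_{u;-s}^{-1}\|$ in $\|u\|_{-s}$ carefully enough to guarantee a sublinear exponent; the lemma as stated only asserts uniform bounds on bounded sets, which is circular here. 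Part $(ii)$ is identical to the paper.

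Part $(iii)$ is where your proposal has a gap. The suggested mechanism---``transferring the $L^2$-trace identity \eqref{trace formula} through the quantitative bound of part~$(i)$''---does not produce the needed tail control: the trace formula bounds $\sum n|\zeta_n|^2$ by $\|u\|_0^2$, not by $\|u\|_{-s}^2$, and part~$(i)$ estimates $\|u\|_{-s}$ in terms of $\|\Phi(u)\|_{1/2-s}$, which is the wrong direction. Approximating by finite gap potentials does not help either, since relating $\|u - \Phi^{-1}(\Phi(u)^{(N)})\|_{-s}$ to the tail of $\Phi(u)$ presupposes continuity of $\Phi^{-1}$. The paper avoids norm convergence altogether and instead shows that $\Phi$ maps relatively compact subsets of $H^{-s}_{r,0}$ to relatively compact subsets of $h^{1/2-s}_+$; combined with weak sequential continuity, this yields strong continuity. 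The concrete step is a Fourier-mode splitting $u = u_N + u_\bot$: relative compactness of $A$ gives $\|u_\bot\|_{-s} < \varepsilon$ and $\|u_N\|_0 < R_\varepsilon$ uniformly on $A$, and then $K_{u;-s}(\Pi u) = K_{u;0}(\Pi u_N) + K_{u;-s}(\Pi u_\bot)$ with the first summand bounded in $\ell^2(\N_0)$ and the second small in $h^{-s}(\N_0)$, uniformly over $A$ by Lemma~\ref{basis on Sobolev scale}. Since $\zeta_n(u) \simeq n^{-1/2}\langle \Pi u | f_n(\cdot, u)\rangle$ uniformly on $A$, Lemma~\ref{rel compact subsets} gives relative compactness of $\Phi(A)$ in $h^{1/2-s}_+$. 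The same scheme with the roles of $\Phi$ and $\Phi^{-1}$ reversed handles the inverse.
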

\begin{remark}
As mentioned in Remark \ref{weakPhi}, the map
$\Phi: L^2_{r,0} \to h^{1/2}_+$ and its inverse
$\Phi^{-1}: h^{1/2}_+ \to L^2_{r,0}$
are sequentially weakly continuous.
\end{remark}

\noindent
{\em Proof of Proposition \ref{extension Phi, part 2}$(i)$. }
Let $0 < s < 1/2$ and $u \in H^{-s}_{r,0}$.
Recall that by Corollary \ref{definition L_u},
$L_u$ is a self-adjoint operator with domain 
${\rm{dom}}(L_u) \subset H_+$,
has discrete spectrum and is bounded from below. 
Thus $L_u - \lambda_0(u) + 1 \ge 1$ where $\lambda_0(u)$
denotes the smallest eigenvalue of $L_u.$ By the considerations
 in Section \ref{extension of Phi, part 1}
 (cf. Lemma \ref{restriction of B_u to H^{1-s}}),
$L_u$ extends to a bounded operator 
$L_u: H^{1/2}_+ \to H^{-1/2}_+$ and satisfies 
$$
\langle L_u f | f \rangle = 
\langle D f | f \rangle - 
\langle u | f \overline f  \rangle\, , 
\quad \forall f \in H^{1/2}_+ \, .
$$
By Lemma \ref{estimate for T_u f}$(i)$ one has
$ | \langle  u | f \overline f \rangle |
 \le  C_{1,s}^2 \|u\|_{-s} \| f\|_{1/2}^2$ for any $f \in H^{1/2}_+$ and hence
 \begin{align}
 \| f \|^2 & \le 
 \langle ( L_u - \lambda_0(u) + 1) f | f \rangle
 \nonumber \\
 &\le \langle D f | f \rangle + 
 C_{1,s}^2 \|u\|_{-s}\| f\|_{1/2}^2
 + (- \lambda_0(u) + 1) \|f\|^2 \, , \nonumber
 \end{align}
yielding the estimate
$$
 \| f \|^2 \le  
 \langle ( L_u - \lambda_0(u) + 1) f | f \rangle \le
 M_u \|f\|_{1/2}^2
 $$
 where
\begin{equation}\label{def M_u}
 M_u:= C_{1,s}^2 \|u\|_{-s} + (2  - \lambda_0(u)) \,.
\end{equation}
To shorten notation, we will for the remainder of
the proof no longer indicate the dependence of
spectral quantities such as $\lambda_n$ or $\gamma_n$
on $u$ whenever appropriate.
The square root of the operator 
$L_u - \lambda_0 + 1$,
$$
R_u := (L_u - \lambda_0 + 1)^{1/2} : 
H^{1/2}_+ \to H_+\, ,
$$
can then be defined 
in terms of the basis $f_n \equiv f_n(\cdot, u)$,
$n \ge 0$, of eigenfunctions of $L_u$ in a standard way 
as follows: 
By Lemma \ref{basis on Sobolev scale}, any
$f \in H^{1/2}_+$ has an expansion of the form
$f = \sum_{n=0}^\infty \langle f | f_n \rangle f_n$
where $(\langle f | f_n \rangle)_{n \ge 0}$
is a sequence in $h^{1/2}(\N_0)$. $R_u f$ is then defined
as 
$$
R_u f := \sum_{n=0}^\infty 
(\lambda_n - \lambda_0 + 1)^{1/2}
\langle f | f_n \rangle f_n
$$
Since  $(\lambda_n - \lambda_0 + 1)^{1/2} 
\sim \sqrt{n}$ (cf. \eqref{sandwich lambda_n})
one has
$$
\big( (\lambda_n - \lambda_0 + 1)^{1/2}
\langle f | f_n \rangle)^{1/2}\big)_{n \ge 0}
\in \ell^2(\N_0)
$$
implying that $R_u f \in H_+$
(cf. Lemma \ref{basis on Sobolev scale}). 
Note that
$$
\|f\|^2 \le \langle R_u f | R_u f  \rangle
= \langle R_u^2 f | f  \rangle
\le M_u \|f\|^2_{1/2}\, , \quad
\forall f \in H^{1/2}_+\, ,
$$
and that $R_u$ is a positive self-adjoint operator
when viewed as an operator with domain $H^{1/2}_+$, 
acting on $H_+$.
By complex interpolation 
(cf. e.g. \cite[Section 1.4]{Tay}) one then concludes that
for any $0 \le \theta \le 1$
$$
R_u^{\theta} : H^{\theta/2}_+ \to H_+\, , \qquad
\| R_u^{\theta} f\|^2 \le 
M_u^\theta \|f \|^2_{\theta/2} \, , 
\quad \forall f \in H^{\theta/2}_+ \, .
$$
Since by duality, 
$$
R_u^{\theta} : H_+ \to H^{-\theta/2}_+ \, , \qquad
\| R_u^{\theta} g\|_{-\theta /2}^2 \le 
M_u^\theta \|g \|^2 \, , 
\quad \forall g \in H_+ \, ,
$$
one infers, 
using that $R_u^{\theta} : H_+ \to H^{-\theta/2}_+$
is boundedly invertible, that
for any $f \in H^{-\theta /2}_+$,
$$
\| f\|_{-\theta /2}^2 \le 
M_u^\theta \| R_u^{-\theta} f\|^2 \, ,
\qquad
R_u^{-\theta} := (R_u^{\theta})^{-1}\, .
$$
Applying the latter inequality
to $f =\Pi u$ and  $\theta = 2s$
and using that
$\Pi u = \sum_{n =1}^\infty 
\langle \Pi u | f_n\rangle f_n$
and   
$\langle \Pi u | f_n\rangle =
 - \lambda_n  \langle 1 | f_n\rangle$
one sees that
\begin{equation}\label{bound 1 for -s norm of u}
\frac{1}{2} \|u\|^{2}_{-s} = \|\Pi u\|^{2}_{-s}
\le M_u^{2s} \Sigma 
\end{equation}
where
$$
\Sigma:= \sum_{n=1}^\infty \lambda_n^{2}
\big( \lambda_n - \lambda_0+ 1 \big)^{-2s}
 \ |\langle 1\vert f_n\rangle |^2 \ .
 $$
We would like to deduce from 
\eqref{bound 1 for -s norm of u}
an estimate of $\|u\|_{-s}$ in terms of the
 $\gamma_n$'s.
Let us first consider $M_u^{2s}$. 
By \eqref{def M_u} one has
$$
M_u^{2s} = 2^{2s} \max \{ (C_{1,s}^2 \|u\|_{-s})^{2s},  
(2  - \lambda_0(u))^{2s}\} \, ,
$$ 
yielding
\begin{equation}\label{split M_u}
M_u^{2s}
\le (\|u\|_{-s}^2)^s (2 C_{1,s}^2)^{2s}
+ (2(2  - \lambda_0(u)))^{2s} \,.
\end{equation}
Applying Young's inequality 
with $1/p = s$, $1/q = 1 - s$
one obtains
\begin{equation}\label{bound first term of bound 1}
(\|u\|_{-s}^2)^s (2 C_{1,s}^2)^{2s} \Sigma
\le \frac{1}{4} \|u\|_{-s}^2 +
\big( (4 C_{1,s}^2)^{2s} \Sigma \big)^{1/(1-s)} \, ,
\end{equation}
which when combined with \eqref{bound 1 for -s norm of u} 
and \eqref{split M_u}, leads to
$$
\frac{1}{4} \|u\|_{-s}^2 \le 
\big( (4 C_{1,s}^2)^{2s} \Sigma \big)
^{1/(1-s)} 
+ (2(2  - \lambda_0(u)))^{2s} \Sigma \, .
$$
The latter estimate is of the form
\begin{equation}\label{bound 2 of s norm of u}
\|u\|_{-s}^2 \le 
C_{3,s} \Sigma^{1/(1-s)} 
+C_{4,s} (2  - \lambda_0(u))^{2s} \Sigma \, ,
\end{equation}
where $ C_{3,s}, C_{4,s} > 0$ are constants, only
depending on $s$.
Next let us turn to 
$\Sigma = \sum_{n=1}^\infty \lambda_n^{2}
\big( \lambda_n - \lambda_0+ 1 \big)^{-2s}
 \ |\langle 1\vert f_n\rangle |^2$.
Since 
$$
\lambda_n = n - \sum_{k=n+1}^\infty \gamma_k \ ,
\qquad 
\ |\langle 1\vert f_n\rangle |^2 = \gamma_n \kappa_n \, .
$$
and 
\begin{equation}\label{product for kappa}
\kappa_n = \frac{1}{\lambda_n - \lambda_0}
\prod_{p \ne n} (1 - \frac{\gamma_p}{\lambda_p - \lambda_n})\, ,
\end{equation}
the series $\Sigma$ can be expressed in terms of the $\gamma_n$'s. To obtain a bound for $\Sigma$
it remains to estimate the $\kappa_n$'s. Note that
$$
\prod_{p \ne n} 
(1 - \frac{\gamma_p}{\lambda_p - \lambda_n})
\le \prod_{p < n} 
(1 + \frac{\gamma_p}{\lambda_n - \lambda_p})
\le e^{\sum_{p=1}^n \gamma_p} \le e^{- \lambda_0} \, .
$$
Since $ (\lambda_n - \lambda_0)^{-1} = 
(n + \sum_{k=1}^n \gamma_k)^{-1} \le n^{-1}$,
it then follows that
$$
0 < \kappa_n \le  \ \frac{e^{- \lambda_0}}{n}\, , \quad \forall n \ge 1.
$$
Combining the estimates above we get
$$
\Sigma \le e^{-\lambda_0} 
\sum_{n=1}^\infty \lambda_n^{2} n^{-2s -1}  \gamma_n  \ .
$$
By splitting the sum $\Sigma$
into two parts, 
$\Sigma = 
\sum_{ n < - \lambda_0(u) } + 
\sum_{n \ge - \lambda_0(u) }
$ 
and taking into account that $0 \le \lambda_n \le n$ for any 
$n \ge - \lambda_0$ and  $|\lambda_n| \le -\lambda_0 $
for any $1 \le n < - \lambda_0$, one has
$$
\Sigma \le (1 - \lambda_0)^2e^{-\lambda_0} 
\sum_{n = 1}^\infty n^{1 -2s} \gamma_n \, .
$$
Together with the estimate 
\eqref{bound 2 of s norm of u} this shows that
the inverse image  by $\Phi$  
of any bounded subset 
of sequences in $h^{1/2 -s}$ is bounded 
in $H^{-s}_{r,0}$.
\hfill $\square$

\medskip

\noindent
{\em Proof of Proposition \ref{extension Phi, part 2}$(ii)$. }
 First we prove that for any $0 < s < 1/2$,
$\Phi: H^{-s}_{r,0} \to h^{1/2-s}_+$ is onto.
Given $z=(z_n)_{n \ge 1}$ in $h^{1/2-s}_+,$ consider
the sequence $\zeta^{(k)} = (\zeta^{(k)}_n)_{n \ge 1}$,
defined for any $k \ge 1$ by
$$
\zeta^{(k)}_n = z_n\, \,\,\, \forall 1 \le n \le k\, , \qquad
\zeta^{(k)}_n = 0\, \,\,\, \forall n > k\, .
$$
Clearly $\zeta^{(k)} \to z$ strongly in $h^{1/2-s}$.
Since for any $k \ge 1$, $\zeta^{(k)} \in h^{1/2}_+$ 
Theorem \ref{main result} implies that there exists 
a unique element
$u^{(k)}\in L^2_{r,0}$ with $\Phi(u^{(k)})=\zeta^{(k)}$.
By Proposition \ref{extension Phi, part 2}$(i)$, 
$\sup_{k \ge 1}\|u^{(k)} \|_{-s} < \infty$.
Choose a weakly convergent subsequence 
$(u^{(k_j)})_{j \ge 1}$ of $(u^{(k)})_{k \ge 1}$
and denote its weak limit in $H^{-s}_{r,0}$ by $u$.
Since by Proposition \ref{extension Phi, part 1},
$\Phi: H^{-s}_{r,0} \to h^{1/2-s}_+$ is 
sequentially weakly continuous,
$\Phi(u^{(k_j)}) \rightharpoonup \Phi(u)$
weakly in $h^{1/2-s}_+$. On the other hand,
$\Phi(u^{(k_j)}) = \zeta^{(k_j)} \to z$ strongly in $h^{1/2-s}$,
implying that $\Phi(u) = z$.
 This shows that $\Phi$ is onto.\\
 It remains to prove that for any $0 \le s < 1/2$, $\Phi^{-1}$
is sequentially weakly continuous. 
Assume that $(\zeta^{(k)})_{k \ge 1}$ is a sequence in $h^{1/2-s}$,
weakly converging to $\zeta \in h^{1/2-s}$.
Let $u^{(k)}:= \Phi^{-1}(\zeta^{(k)})$.
By Proposition \ref{extension Phi, part 2}$(i)$
(in the case $0 < s < 1/2$) and
Remark \ref{RemarkThm1}$(ii)$ (in the case $s=0$),
 $(u^{(k)})_{k \ge 1}$ is a bounded sequence in
$H^{-s}_{r,0}$ and thus admits a weakly convergent
subsequence $(u^{k_j})_{j \ge 1}$. Denote its
limit in $H^{-s}_{r,0}$ by $u$.
Since by Proposition \ref{extension Phi, part 1}, $\Phi$
is sequentially weakly continuous, $\Phi(u^{k_j}) 
\rightharpoonup \Phi(u)$
weakly in $h^{1/2-s}$. 
On the other hand, by assumption,
$\Phi(u^{k_j}) = \zeta^{(k_j)} \rightharpoonup \zeta$
and hence $u = \Phi^{-1}(\zeta)$
and $u$ is independent
of the chosen subsequence $(u^{k_j})_{j \ge 1}$.
This shows that 
$\Phi^{-1}(\zeta^{(k)}) \rightharpoonup  \Phi^{-1}(\zeta)$
weakly in $H^{-s}_{r,0}$.
\hfill $\square$

\medskip

\noindent
{\em Proof of 
Proposition \ref{extension Phi, part 2}$(iii)$. }
By Proposition \ref{extension Phi, part 1},
$\Phi:  H^{-s}_{r,0} \to h^{1/2-s}_+$ is 
sequentially weakly continuous for any $0 \le s < 1/2$. 
To show that this map is continuous
it then suffices to prove that
the image $\Phi(A)$ of any
relatively compact subset $A$ of $ H^{-s}_{r,0}$ 
 is relatively compact in $h^{1/2-s}_+$.
For any given $\e > 0,$ choose $N \equiv N_\e \ge 1$ and
$R \equiv R_\e > 0$ as in 
Lemma \ref{rel compact subsets}, stated below.
Decompose $u \in A$ as $u = u_N + u_{\bot}$ where
$$
u_N := \sum_{0 <|n| \le N_\e} \widehat u(n) e^{inx} \, ,
\qquad
u_\bot := \sum_{|n| > N_\e} \widehat u(n) e^{inx} \, .
$$
By Lemma \ref{rel compact subsets}, $\|u_N\| < R_\e$
and $\| u_\bot \|_{-s} < \e.$ 
By Lemma \ref{basis on Sobolev scale}, applied
with $\theta = -s$, one has
$$
K_{u; -s}(\Pi u) = K_{u; -s}(\Pi u_N) +
K_{u; -s}(\Pi u_\bot) \in h^{-s}(\N_0)
$$
where  
$K_{u; -s}(\Pi u_N) = K_{u; 0}(\Pi u_N)$
since $\Pi u_N \in H_+$. 
Lemma \ref{basis on Sobolev scale} then implies 
that there exists $C_A > 0$, independent of $u \in A$, 
so that
$$
\| K_{u; 0}(\Pi u_N) \| \le C_A R_\e \, , \qquad
\| K_{u; -s}(\Pi u_\bot) \|_{-s} \le C_A \e \, .
$$
Since $\e > 0$ can be chosen arbitrarily small,
it then follows by Lemma \ref{rel compact subsets}
that $K_{u; -s}(\Pi (A))$ is relatively compact in
$h^{-s}(\N_0)$. Since by definition
$$
\big( K_{u; -s}(\Pi u) \big)_n = 
\langle \Pi u | f_n (\cdot, u) \rangle
\, , \quad \forall n \ge 0\, ,
$$ 
and since by \eqref{formula coeff of u},
$$
\zeta_n(u) \simeq \frac{1}{\sqrt{n}}
\langle \Pi u | f_n (\cdot, u) \rangle   \quad \text{as }\,\,  n \to \infty
$$
uniformly with respect to $u \in A,$ it follows that $\Phi(A)$
is relatively compact in $h^{1/2 -s}_+$.\\
Now let us turn to $\Phi^{-1}$. 
By Proposition \ref{extension Phi, part 2}(ii),
$\Phi^{-1}: h^{1/2-s}_+ \to H^{-s}_{r,0}$
is sequentially weakly continuous. 
To show that this map is continuous
it then suffices to prove that the image $\Phi^{-1}(B)$ of any
relatively compact subset $B$ of $h^{1/2-s}_+$
 is relatively compact in $ H^{-s}_{r,0}$.
By the same arguments as above one sees that 
$\Phi^{-1}: h^{1/2-s}_+ \to H^{-s}_{r,0}$ 
is also continuous.
\hfill $\square$

\smallskip

It remains to state Lemma \ref{rel compact subsets},
used in the proof of  
Proposition \ref{extension Phi, part 2}$(iii)$.
It concerns the well known characterization
of relatively compact subsets of $H^{-s}_{r,0}$
in terms of the Fourier expansion
$u(x) = \sum_{n \ne 0} \widehat u(n) e^{inx}$
of an element $u$ in $H^{-s}_{r,0}$.
\begin{lemma}\label{rel compact subsets}
Let $0 < s < 1/2$ and $A \subset H^{-s}_+$.
Then $A$ is relatively compact in $H^{-s}_+$
if and only if for any $\e > 0,$ there exist
$N_\e \ge 1$ and $R_\e > 0$  so that for any $f \in A,$ the sequence $\xi_n:=\hat f(n), n\ge 0$ satisfies
$$
\big(\sum_{n > N_\e} |n|^{-2s} |\xi_n|^2\big)^{1/2} 
< \e
\, , \qquad 
\big( \sum_{0 <n \le N_\e} |\xi_n|^2 \big)^{1/2} 
< R_\e \, .
$$
The latter conditions on $(\xi_n)_{n \ge 0}$ characterize relatively compact subsets of $h^{-s}(\N_0)$.
\end{lemma}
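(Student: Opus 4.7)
The plan is to work entirely in the sequence space $h^{-s}(\N_0)$ via the isometric Fourier isomorphism $f \mapsto (\widehat f(n))_{n \ge 0}$ from $H^{-s}_+$ onto $h^{-s}(\N_0)$, so that the two assertions of the lemma are identical. Since $h^{-s}(\N_0)$ is a complete metric space, relative compactness of a subset is equivalent to total boundedness, i.e.\ to the existence of a finite $\delta$-net for every $\delta > 0$. I will establish both implications via this criterion.

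For the sufficiency direction, fix $\e > 0$ and let $N_\e, R_\e$ be as in the hypothesis. Every $\xi = (\xi_n)_{n \ge 0} \in A$ decomposes as $\xi = P_{\le N_\e}\xi + P_{> N_\e}\xi$, where the orthogonal projection $P_{> N_\e}$ zeroes out the first $N_\e + 1$ coordinates. By assumption $\|P_{>N_\e}\xi\|_{-s} < \e$. The heads $P_{\le N_\e}\xi$ lie in a finite-dimensional subspace and have $\ell^2$-norm at most $R_\e$, hence form a totally bounded set; extract a finite $\e$-net $\{\eta^{(1)}, \ldots, \eta^{(L)}\}$ for them, viewed in $h^{-s}(\N_0)$ (on this finite-dimensional piece the weighted norm is dominated by the $\ell^2$-norm). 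Combining, each $\xi \in A$ lies within $h^{-s}$-distance $2\e$ of some $\eta^{(\ell)}$, producing a finite $2\e$-net for $A$ and establishing total boundedness.

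For the necessity direction, assume $A$ is relatively compact. Given $\e > 0$, choose a finite $\e/2$-net $\{\xi^{(1)}, \ldots, \xi^{(M)}\} \subset A$. Since each $\|P_{>N}\xi^{(j)}\|_{-s} \to 0$ as $N \to \infty$ by dominated convergence, there exists $N_\e$ with $\|P_{>N_\e}\xi^{(j)}\|_{-s} < \e/2$ for every $j$. For arbitrary $\xi \in A$, pick $j$ with $\|\xi - \xi^{(j)}\|_{-s} < \e/2$; since $P_{>N_\e}$ is a contraction on $h^{-s}(\N_0)$, the triangle inequality yields $\|P_{>N_\e}\xi\|_{-s} < \e$. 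Relative compactness also forces $A$ to be bounded in $h^{-s}$, say $\|\xi\|_{-s} \le C$ for all $\xi \in A$; since $\langle n \rangle^{-2s} \ge N_\e^{-2s}$ for $1 \le n \le N_\e$, one obtains $\big( \sum_{0 < n \le N_\e} |\xi_n|^2 \big)^{1/2} \le N_\e^s C =: R_\e$. This is a standard Riesz--Kolmogorov style characterization and no genuine obstacle arises; the only care needed is the bookkeeping between the weighted norm used on tails and the unweighted norm used on the finitely many head coefficients.
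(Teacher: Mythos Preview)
The paper does not prove this lemma; it is introduced as ``the well known characterization of relatively compact subsets'' and stated without proof. Your argument via total boundedness is exactly the standard one and is correct.

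One minor caveat worth flagging: as literally written, the second hypothesis controls only $\sum_{0<n\le N_\e}|\xi_n|^2$, leaving $\xi_0$ unconstrained, so the sufficiency direction is technically false (consider $A=\{(k,0,0,\ldots):k\in\N\}$). Your sufficiency argument silently assumes the head bound covers $\xi_0$ as well when you write ``have $\ell^2$-norm at most $R_\e$''. In the paper's applications the potentials have mean zero, so $\xi_0=0$ and the issue is invisible; with that reading (or with $0\le n\le N_\e$ in the second sum) your proof is complete.
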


\smallskip
\noindent
{\em Proof of Theorem \ref{extension Phi}.}
The claimed statements follow from
Proposition \ref{extension Phi, part 1} and
Proposition \ref{extension Phi, part 2}.
In particular, item $(ii)$ of Theorem \ref{extension Phi} 
follows from Proposition \ref{extension Phi, part 2}$(i)$ by setting
$$F_s(R):=\sup_{\| \xi \|_{\frac 12-s}\leq R}\| \Phi ^{-1}(\xi)\|_{-s}\ .$$
\hfill $\square$
 

\section{Solution maps $\mathcal S_0$, $\mathcal S_B$ and $\mathcal S_c$, $\mathcal S_{c,B}$} \label{mathcal S_B}

In this section we provide results related to the solution
map of \eqref{BO}, which will be used to prove 
Theorem \ref{Theorem 1} in the subsequent section.

\medskip

\noindent
{\em Solution map $\mathcal S_{B}$ and its extension. }
First we study the map $\mathcal S_B,$ defined in Section 2
on $h^{1/2}_+$. Recall that by 
\eqref{frequencies in Birkhoff} -- \eqref{frequency map},
the nth frequency of \eqref{BO} is a real valued map defined on $\ell^2_+$ by
$$
\omega_n(\zeta) := n^2 -2\sum_{k=1}^{n} k |\zeta_k|^2
- 2n\sum_{k=n+1}^{\infty} |\zeta_k|^2\, .
$$
For any $0 < s \le 1/2$, the map $\mathcal S_B$ naturally extends to $h^{1/2 -s}_+$, mapping initial data 
$\zeta(0) \in h^{1/2 -s}_+$ to the curve 
\begin{equation}\label{formula for S_B}
\mathcal S_B(\cdot, \zeta(0)): \R \to h^{1/2 -s}_+\ , \  t \mapsto 
\mathcal S_B(t, \zeta(0)):=
\big( \zeta_n(0)e^{it\omega_n(\zeta)} \big)_{n \ge 1} \, .
\end{equation}
We first record the following properties of the frequencies.
\begin{lemma}\label{continuity frequency map} $(i)$ For any $n \ge 1$,
 $\omega_n: \ell^2_+ \to \R$ is continuous and
$$
| \omega_n(\zeta) - n^2 | \le 2 n \|\zeta\|_0^2 \, , 
 \,\,\, \forall \zeta \in  \ell^2_+\, ;
 \quad 
 | \omega_n(\zeta) - n^2 | \le  2\|\zeta\|_{1/2}^2 
\, , \,\,\, \forall \zeta \in  h^{1/2}_+\, .
$$
$(ii)$ For any $0 \le s < 1/2$, \, 
 $\omega_n: h^{1/2 - s}_+ \to \R$
is sequentially weakly continuous.
\end{lemma}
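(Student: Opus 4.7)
\medskip

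\noindent\textbf{Proof proposal.} The plan is to read off everything directly from the explicit formula
\[
\omega_n(\zeta) - n^2 = -2\sum_{k=1}^{n} k\,|\zeta_k|^2 - 2n\sum_{k=n+1}^{\infty} |\zeta_k|^2.
\]
For the two estimates in $(i)$, I would simply bound the right-hand side in two different ways. In the first bound, note that $k \le n$ in the head and apply $n$ to the tail to obtain
\[
|\omega_n(\zeta) - n^2| \le 2n\sum_{k=1}^{\infty}|\zeta_k|^2 = 2n\,\|\zeta\|_0^2.
\]
For the second bound, I would exploit that in the tail $n < k$, so $2n\,|\zeta_k|^2 \le 2k\,|\zeta_k|^2$, and combine with the head to get
\[
|\omega_n(\zeta) - n^2| \le 2\sum_{k=1}^{\infty} k\,|\zeta_k|^2 = 2\,\|\zeta\|_{1/2}^2,
\]
using $\langle k \rangle = k$ for $k \ge 1$. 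Continuity of $\omega_n$ on $\ell^2_+$ is then immediate: the head $\sum_{k=1}^n k\,|\zeta_k|^2$ is a continuous polynomial in finitely many coordinates, while the tail rewrites as $\|\zeta\|_0^2 - \sum_{k=1}^n |\zeta_k|^2$, which is the difference of two continuous maps on $\ell^2_+$.

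For $(ii)$, fix $0 \le s < 1/2$ and suppose $\zeta^{(j)} \rightharpoonup \zeta$ weakly in $h^{1/2-s}_+$. The idea is to upgrade weak convergence in $h^{1/2-s}_+$ to strong convergence in $\ell^2_+$ via a compact embedding argument, and then invoke $(i)$. Since $1/2 - s > 0$, for any $N \ge 1$ and any $j$ one has
\[
\sum_{k>N} |\zeta_k^{(j)} - \zeta_k|^2 \le N^{\,2s-1}\,\|\zeta^{(j)} - \zeta\|_{1/2-s}^2,
\]
which is uniformly small by the weak boundedness of $(\zeta^{(j)})$ in $h^{1/2-s}_+$. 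Coordinate-wise convergence $\zeta_k^{(j)} \to \zeta_k$ (which follows from weak convergence, since the coordinate evaluations are bounded linear functionals on $h^{1/2-s}_+$) handles the truncated head. Combining the two gives $\zeta^{(j)} \to \zeta$ strongly in $\ell^2_+$, and then the continuity of $\omega_n$ on $\ell^2_+$ established in $(i)$ yields $\omega_n(\zeta^{(j)}) \to \omega_n(\zeta)$.

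I do not anticipate any real obstacle. The potential worry would have been that $\omega_n$ is \emph{quadratic} in $\zeta$, so weak continuity cannot be extracted from weak convergence alone; this is precisely why the compactness of the embedding $h^{1/2-s}_+ \hookrightarrow \ell^2_+$ for $s < 1/2$ is the key input. The favorable linear-in-actions structure of $\omega_n$ together with this compactness makes both parts of the lemma essentially automatic.
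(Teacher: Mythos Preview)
Your proposal is correct and follows essentially the same approach as the paper: the paper simply says that item $(i)$ follows in a straightforward way from the explicit formula for $\omega_n$, and that item $(ii)$ follows from $(i)$ since $h^{1/2-s}_+$ compactly embeds into $\ell^2_+$ for $0 \le s < 1/2$. You have merely spelled out the details of these two steps.
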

\begin{proof}
Item $(i)$ follows in a straightforward way from the formula
\eqref{frequencies in Birkhoff} of $\omega_n$. 
Since for any $0 \le s < 1/2$, $h^{1/2 - s}_+$ 
compactly embeds into $\ell^2_+$, item $(ii)$ follows from $(i)$.
\end{proof}
From Lemma \ref{continuity frequency map} one infers  the following properties of $\mathcal S_B$. We leave the easy proof to the reader.
\begin{proposition}\label{S in Birkhoff continuous}
For any $0 \le s \le 1/2,$ the following holds:\\
$(i)$ For any  initial data $\zeta(0) \in h^{1/2-s}_+$, 
$$
\R \to h^{1/2-s}_+\, , \, t \mapsto 
\mathcal S_B(t, \zeta(0))
$$ 
is continuous.\\
$(ii)$ For any $T > 0,$
$$
\mathcal S_B: h^{1/2-s}_+ \to C([-T, T], h^{1/2-s}_+), \, 
\zeta(0) \mapsto  \mathcal S_B(\cdot, \zeta(0))\, ,
$$
is continuous and for any $t \in \R$,
$$
\mathcal S^t_B : h^{1/2-s}_+ \to h^{1/2-s}_+, \,
\zeta(0) \mapsto  \mathcal S_B(t, \zeta(0)) \, ,
$$
is a homeomorphism.
\end{proposition}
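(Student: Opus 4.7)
\medskip

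\noindent
\textbf{Proof plan for Proposition \ref{S in Birkhoff continuous}.}
The key observation is that because $\omega_n$ depends only on the actions $|\zeta_k|^2$ and because $|\mathcal S_B(t,\zeta(0))_n|=|\zeta_n(0)|$ for every $n$ and $t$, the actions are conserved along $t\mapsto \mathcal S_B(t,\zeta(0))$. In particular, $\omega_n(\mathcal S_B(t,\zeta(0)))=\omega_n(\zeta(0))$ for all $t$, and
\[
\|\mathcal S_B(t,\zeta(0))\|_{1/2-s}=\|\zeta(0)\|_{1/2-s},\qquad \forall t\in\R.
\]
This isometry property reduces the entire analysis to a straightforward dominated-convergence argument.

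For item (i), I would fix $\zeta(0)\in h^{1/2-s}_+$ and write, for $t,h\in\R$,
\[
\|\mathcal S_B(t+h,\zeta(0))-\mathcal S_B(t,\zeta(0))\|_{1/2-s}^2=\sum_{n\ge 1}\langle n\rangle^{1-2s}|\zeta_n(0)|^2\,|e^{ih\omega_n(\zeta(0))}-1|^2.
\]
Each summand tends to $0$ as $h\to 0$ and is bounded pointwise by $4\langle n\rangle^{1-2s}|\zeta_n(0)|^2$, which is summable. Dominated convergence gives continuity in $t$.

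For item (ii), let $\zeta^{(k)}\to\zeta$ in $h^{1/2-s}_+$ and fix $T>0$. I would split
\[
\|\mathcal S_B(t,\zeta^{(k)})-\mathcal S_B(t,\zeta)\|_{1/2-s}^2=\sum_{n\le N}+\sum_{n>N}
\]
with the summand $\langle n\rangle^{1-2s}|\zeta_n^{(k)} e^{it\omega_n(\zeta^{(k)})}-\zeta_n e^{it\omega_n(\zeta)}|^2$. The tail $n>N$ is controlled uniformly in $k$ and $|t|\le T$ by $2\sum_{n>N}\langle n\rangle^{1-2s}(|\zeta_n^{(k)}|^2+|\zeta_n|^2)$, which by the strong convergence $\zeta^{(k)}\to\zeta$ in $h^{1/2-s}_+$ is made arbitrarily small by choosing $N$ large (standard equi-integrability). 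For the finite part $n\le N$ I would use the triangle inequality
\[
|\zeta_n^{(k)} e^{it\omega_n(\zeta^{(k)})}-\zeta_n e^{it\omega_n(\zeta)}|\le |\zeta_n^{(k)}-\zeta_n|+|\zeta_n|\cdot |t|\cdot|\omega_n(\zeta^{(k)})-\omega_n(\zeta)|,
\]
combined with the fact that $h^{1/2-s}_+\hookrightarrow \ell^2_+$ and the continuity of $\omega_n:\ell^2_+\to\R$ from Lemma \ref{continuity frequency map}(i). This bounds the finite sum uniformly for $|t|\le T$ by a quantity going to $0$ as $k\to\infty$. Together these give uniform convergence on $[-T,T]$, proving continuity of $\mathcal S_B:h^{1/2-s}_+\to C([-T,T],h^{1/2-s}_+)$.

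For the homeomorphism claim, since $\omega_n$ depends only on the conserved actions, a direct computation yields the group property $\mathcal S_B^{t_1}\circ\mathcal S_B^{t_2}=\mathcal S_B^{t_1+t_2}$ and in particular $\mathcal S_B^t\circ\mathcal S_B^{-t}=\mathrm{id}$, so $\mathcal S_B^t$ is a bijection with continuous inverse $\mathcal S_B^{-t}$. The only mildly delicate point is controlling the tail uniformly in $k$, but this is handled cleanly by the strong convergence in $h^{1/2-s}_+$; there is no substantive obstacle.
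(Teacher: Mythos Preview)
Your proposal is correct and is exactly the kind of dominated-convergence and tail-splitting argument the paper has in mind; indeed, the paper does not give a proof at all but simply states that the proposition follows easily from Lemma~\ref{continuity frequency map} and leaves the details to the reader. Your write-up fills in precisely those details, using Lemma~\ref{continuity frequency map}(i) for the continuity of each $\omega_n$ on $\ell^2_+\supset h^{1/2-s}_+$ and the conservation of the actions for the group property.
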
 

\smallskip

\noindent
{\em Solution map $\mathcal S_0$ and its extension.}
Recall that in Section \ref{Birkhoff map} we introduced the
solution map $\mathcal S_0$ of \eqref{BO}
on the subspace space $L^2_{r,0}$ of $L^2_r$,
consisting of elements in $L^2_r$ with average $0$,
in terms of the Birkhoff map $\Phi$,
\begin{equation}\label{flow map BO}
\mathcal S_0 = \Phi^{-1} \mathcal S_B \Phi\, : \,  
L^2_{r,0} \to C(\R, L^2_{r,0}) \, .
\end{equation}
Theorem \ref{extension Phi} will now be applied to
prove the following result about the extension
of $\mathcal S_0$ to the Sobolev space $H^{-s}_{r,0}$
with $0 < s < 1/2$, consisting of elements in
$H^{-s}_r$ with average zero.
It will be used in Section \ref{Proofs of main results} 
to prove Theorem \ref{Theorem 1}.

\begin{proposition}\label{solution map S_0 for neg}
For any $0 \le s < 1/2$, the following holds:\\
$(i)$ The Benjamin-Ono equation is globally $C^0-$well-posed on $H^{-s}_{r,0}$.\\
$(ii)$ There exists an increasing function
$F_s :\R_{\ge 0} \to \R_{\ge 0} $ so that
$$
\| u \|_{-s} \le F_s\big(\|\Phi(u)\|_{1/2-s}\big)\, , \quad \forall u \in H^{-s}_{r,0} \, .
$$
In particular, 
for any initial data $u(0) \in H^{-s}_{r,0}$,
\begin{equation}\label{a priori bound}
\sup_{t\in \R}\| \mathcal S^t_0(u(0)) \|_{-s}\leq 
F_s\big(\|\Phi(u(0))\|_{1/2-s}\big)\, .
\end{equation}
\end{proposition}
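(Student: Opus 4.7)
The plan is to define the candidate solution map on $H^{-s}_{r,0}$ by conjugating the explicit Birkhoff flow through the extended Birkhoff map,
$$
\tilde{\mathcal S}_0 := \Phi^{-1}\circ \mathcal S_B\circ \Phi : H^{-s}_{r,0}\to C(\R, H^{-s}_{r,0}),
$$
with $\Phi$ the extension of Theorem \ref{extension Phi} and $\mathcal S_B$ the explicit flow given by \eqref{formula for S_B}, and then to verify both items of Definition \ref{def well-posedness}. The continuity of $\tilde{\mathcal S}_0$ as a map into $C([-T,T],H^{-s}_{r,0})$ for every $T>0$, together with the fact that each time-$t$ map $\tilde{\mathcal S}_0^t : H^{-s}_{r,0}\to H^{-s}_{r,0}$ is a homeomorphism, is then immediate by composing the three continuity statements: $\Phi$ and $\Phi^{-1}$ by Theorem \ref{extension Phi}(i), and $\mathcal S_B$ by Proposition \ref{S in Birkhoff continuous}.

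Next I would identify $\tilde{\mathcal S}_0$ with the Benjamin--Ono solution map in the sense of Definition \ref{def solution}. Let $v_0\in H^{-s}_{r,0}$ and let $(v_0^{(k)})\subset H^\sigma_r$, $\sigma>3/2$, converge to $v_0$ in $H^{-s}_{r,0}$; since the means $\langle v_0^{(k)}|1\rangle$ then tend to $0$ and since classical solutions preserve the mean, one may and does assume $v_0^{(k)}\in H^\sigma_{r,0}$. Because the extension $\Phi$ of Theorem \ref{extension Phi} restricts on $L^2_{r,0}$ to the Birkhoff map of Theorem \ref{main result}, the identity \eqref{flow map BO} together with the quadrature formula \eqref{BOsolution}, already valid on $L^2_{r,0}\supset H^\sigma_{r,0}$, gives $\mathcal S(\cdot, v_0^{(k)}) = \tilde{\mathcal S}_0(\cdot, v_0^{(k)})$ for every $k$. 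The continuity just established then yields $\mathcal S(\cdot, v_0^{(k)})\to \tilde{\mathcal S}_0(\cdot, v_0)$ in $C([-T,T],H^{-s}_{r,0})$ for every $T>0$, hence in $C(\R,H^{-s}_{r,0})$. This simultaneously proves existence of the global in time solution and continuous dependence on the initial datum, so $\mathcal S = \tilde{\mathcal S}_0$ and \eqref{BO} is globally $C^0$-well-posed on $H^{-s}_{r,0}$.

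For the a priori bound (ii), the decisive observation is that \eqref{formula for S_B} shows that the Birkhoff flow only rotates each coordinate $\zeta_n$ and therefore preserves every modulus $|\zeta_n|$; in particular
$$
\|\mathcal S_B^t(\zeta)\|_{1/2-s} = \|\zeta\|_{1/2-s},\qquad \forall\, t\in\R,\ \zeta\in h^{1/2-s}_+.
$$
Applying Theorem \ref{extension Phi}(ii) to $\mathcal S_0^t(u_0) = \Phi^{-1}(\mathcal S_B^t(\Phi(u_0)))$ then yields
$$
\|\mathcal S_0^t(u_0)\|_{-s}\le F_s\bigl(\|\mathcal S_B^t(\Phi(u_0))\|_{1/2-s}\bigr) = F_s\bigl(\|\Phi(u_0)\|_{1/2-s}\bigr),
$$
which is precisely the time-uniform estimate \eqref{a priori bound}; the existence of the increasing function $F_s$ is furnished by Theorem \ref{extension Phi}(ii) itself.

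The heavy lifting is entirely contained in Theorem \ref{extension Phi}; from the perspective of the present proposition the only delicate point is the compatibility of the extended Birkhoff map with the classical one on $L^2_{r,0}$, which is what licenses the use of \eqref{BOsolution} for smooth classical solutions and hence the density argument above. This compatibility is built into the construction carried out in Sections \ref{extension of Phi, part 1}--\ref{extension of Phi, part 2}, so no further PDE-type analysis is required.
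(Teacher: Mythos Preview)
Your argument is correct and follows essentially the same route as the paper's own proof: define the solution map as $\Phi^{-1}\circ\mathcal S_B\circ\Phi$, invoke the continuity of $\Phi$ and $\Phi^{-1}$ from Theorem \ref{extension Phi} and of $\mathcal S_B$ from Proposition \ref{S in Birkhoff continuous}, and for part $(ii)$ use that $\mathcal S_B^t$ preserves the $h^{1/2-s}_+$-norm together with the bound of Theorem \ref{extension Phi}$(ii)$. You are simply more explicit than the paper about the density step needed to match Definition \ref{def solution}; the paper compresses all of part $(i)$ into a single sentence.
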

\begin{remark}
$(i)$ By the trace formula \eqref{trace formula},
for any $u(0) \in L^2_{r,0}$, 
estimate \eqref{a priori bound} can be improved as follows,
$$
\|u(t) \| = \sqrt{2} \|\Phi(u(0))\|_{1/2} =
\|u(0)\|\, , \quad \forall t \in \R\, .
$$
$(ii)$ We refer to the comments of Theorem \ref{Theorem 1}
in Section \ref{Introduction} for a discussion of the
recent results of Talbut \cite{Tal}, related to \eqref{a priori bound}.
\end{remark}
\begin{proof} Statement $(i)$  follows from the corresponding statements for $S_B$ in Proposition \ref{S in Birkhoff continuous}
and the continuity properties of $\Phi $ and $\Phi^{-1}$ stated in Theorem \ref{extension Phi}. \\
$(ii)$ By Theorem \ref{extension Phi} there exists an increasing
function $F_s : \R_{\ge 0} \to \R_{\ge 0}$ so that for any
$u \in H^{-s}_{r,0}$, 
$\| u \|_{-s} \le F_s(\| \Phi(u) \|_{1/2 -s})$.
Since the norm of $h^{1/2 -s}$
is left invariant by the flow $\mathcal S_B^t$ , 
it follows that 
for any initial data $u(0) \in H^{-s}_{r,0}$, one has
$
\sup_{t\in \R}\| \mathcal S^t(u(0))\|_{-s}\leq 
F_s\big(\|\Phi(u(0))\|_{1/2-s}\big)\, .
$
\end{proof}

\smallskip

\noindent
{\em Solution map $\mathcal S_{c}$.}\label{solution map Sc}
Next we introduce the solution map $\mathcal S_c$
where $c$ is a real parameter.
Let $v(t,x)$ be a solution of \eqref{BO} with initial data $v(0) \in H^s_r$ and $s > 3/2$,
satisfying the properties $(S1)$ and $(S2)$ stated in
Section \ref{Introduction}. 
By the uniqueness property in $(S1)$, it then follows that
 \begin{equation}\label{identity of solutions}
 v(t, x) = u(t, x - 2ct) + c\, , \quad  c= \langle v(0) | 1 \rangle
 \end{equation}
 where $u \in C(\R, H^s_{r,0}) \cap C^1(\R, H^{s-2}_{r,0})$
 is the solution of the initial value problem
 \begin{equation}\label{BO with mean zero}
\partial_t u = H\partial^2_x u - \partial_x (u^2)\,, \qquad  u(0) = v(0) - \langle v(0) | 1 \rangle \, ,
\end{equation}
satisfying $(S1)$ and $(S2)$. 
It then follows that
$ w(t,x) := u(t, x -2ct)$ satisfies $w(0) = u(0)$ and
\begin{equation}\label{BO with c}
\partial_t w = H\partial^2_x w - \partial_x (w^2)  + 2c \partial_x w \, .
\end{equation}
By \eqref{identity of solutions},
the solution map of \eqref{BO with c}, denoted by $\mathcal S_c$, 
is related to the solution map $\mathcal S$ of \eqref{BO}
(cf. property $(S2)$ stated in Section \ref{Introduction}) by
\begin{equation}\label{formula with c}
\mathcal S(t, v(0)) = 
\mathcal S_{[v(0)]} \big(t, v(0) - [v(0)] \big) + [v(0)]
\ , \quad [v(0)] := \langle v(0) | 1 \rangle \, .
\end{equation}
In particular, for any $s > 3/2$,
\begin{equation}\label{solution map for BO with c}
\mathcal S_c: H^s_{r,0} \to  C(\R, H^s_{r, 0}), \ w(0) \mapsto \mathcal S_c(\cdot, w(0) ) 
\end{equation}
is well defined and continuous.
Molinet's results in \cite{Mol} ( cf. also \cite{MP}) imply that the solution map $\mathcal S_c$ continuously 
extends to any
Sobolev space $H^s_{r,0}$ with $0 \le s \le 3/2$. More precisely, for any such $s$,
$\mathcal S_c: H^s_{r,0} \to 
C(\R, H^s_{r,0})$ is continuous
and for any $v_0 \in H^s_{r,0}$, $ \mathcal S_c(t, w_0)$ satisfies equation \eqref{BO}  
in $H^{s-2}_r$.

\medskip

\noindent
{\em Solution map $\mathcal S_{c, B}$ and its extension.}
Arguing as in Section \ref{Birkhoff map}, we use
Theorem \ref{main result}, to express
the solution map $\mathcal S_{c, B}$,
corresponding to the equation \eqref{BO with c}
in Birkhoff coordinates.
Note that \eqref{BO with c} is Hamiltonian, $\partial_t w = \partial_x \nabla \mathcal H_c$, with Hamiltonian
$$
\mathcal H_c : H^s_{r,0} \to \R \, , \qquad  \mathcal H_c (w) =  \mathcal H(w)  + 2c \mathcal H^{(0)}(w)
$$
where by \eqref{CL},  $\mathcal H^{(0)}(w)  = \frac{1}{2\pi} \int_0^{2\pi} \frac{1}{2} w^2 dx$.
Since by Parseval's formula, derived  in \cite[Proposition 3.1]{GK}, 
$ \frac{1}{2\pi} \int_0^{2\pi} \frac{1}{2} w^2 dx = \sum_{n=1}^\infty n | \zeta_n |^2$
one has
$$
\mathcal H_{c, B} (\zeta) := \mathcal H_c (\Phi^{-1} (\zeta)) = \mathcal H_{B} (\zeta) +2c  \sum_{n=1}^\infty n | \zeta_n |^2 \, ,
$$
implying that the corresponding frequencies $\omega_{c, n} $, $n \ge 1$, are given by
\begin{equation}\label{frequ for c ne 0}
\omega_{c, n} (\zeta) =   \partial_{|\zeta_n|^2} \mathcal H_{c, B} (\zeta)= \omega_{n} (\zeta) + 2c n \, .
\end{equation}
For any $c \in \R$, denote by $\mathcal S_{c, B}$
the solution map of \eqref{BO with c}
when expressed in Birkhoff coordinates,
\begin{equation}\label{evolution S_{c, B}}
\mathcal S_{c, B} : h^{1/2}_+ \to C(\R,  h^{1/2}_+ )\, , \zeta(0)  \mapsto  [ t \mapsto  \big(  \zeta_n(0) e^{i t \omega_{c,n}(\zeta(0))}\big)_{n \ge 1} ] \,.
\end{equation}
Note that $\omega_{0,n} = \omega_n$ and hence
$\mathcal S_{0, B}  =  \mathcal S_{ B}$. 
Using the same arguments as in the proof of 
Proposition \ref{S in Birkhoff continuous} one obtains the following
\begin{corollary}\label{solution map S_{c,B} for neg}
The statements of Proposition \ref{S in Birkhoff continuous}
continue to hold for $\mathcal S_{c, B}$ with $c \in \R$
arbitrary.
\end{corollary}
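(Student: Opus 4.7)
The plan is to reduce the claim directly to Proposition~\ref{S in Birkhoff continuous} by exploiting the identity $\omega_{c,n}(\zeta) = \omega_n(\zeta) + 2cn$ from \eqref{frequ for c ne 0}. For each $\alpha \in \R$ introduce the multiplication operator
\begin{equation*}
M_\alpha : h^{1/2-s}_+ \to h^{1/2-s}_+\,, \quad (M_\alpha \zeta)_n := e^{i\alpha n}\zeta_n\,.
\end{equation*}
Since $|e^{i\alpha n}|=1$, $M_\alpha$ is a linear isometry of $h^{1/2-s}_+$ for every $0 \le s \le 1/2$ and every $\alpha \in \R$, with two-sided inverse $M_{-\alpha}$. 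Inspection of \eqref{formula for S_B} and \eqref{evolution S_{c, B}} yields the factorization
\begin{equation*}
\mathcal S^t_{c,B}(\zeta) \, = \, M_{2ct}\bigl(\mathcal S^t_B(\zeta)\bigr)\,, \quad \forall t \in \R,\ \forall \zeta \in h^{1/2-s}_+\,.
\end{equation*}

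Next I would verify that $(\alpha, \zeta) \mapsto M_\alpha \zeta$ is jointly continuous from $\R \times h^{1/2-s}_+$ to $h^{1/2-s}_+$. Continuity in $\zeta$ at fixed $\alpha$ is immediate since $M_\alpha$ is an isometry; continuity in $\alpha$ at fixed $\zeta$ follows from dominated convergence applied to $\sum_{n \ge 1} \langle n \rangle^{1-2s}|e^{i\alpha_k n} - e^{i\alpha n}|^2 |\zeta_n|^2$, with summable majorant $4\langle n \rangle^{1-2s}|\zeta_n|^2$. Joint continuity then follows from the triangle inequality using that each $M_{\alpha'}$ is an isometry.

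With these preparations both items of Proposition~\ref{S in Birkhoff continuous} transfer to $\mathcal S_{c,B}$. For item $(i)$, the continuity of $t \mapsto \mathcal S_{c,B}(t, \zeta(0)) = M_{2ct}\mathcal S_B(t, \zeta(0))$ follows from composing the continuity of $t \mapsto \mathcal S_B(t, \zeta(0))$ with the joint continuity of $M$. For item $(ii)$, the continuity of $\zeta(0) \mapsto \mathcal S_{c,B}(\cdot, \zeta(0))$ into $C([-T,T], h^{1/2-s}_+)$ follows by observing that the map $w \mapsto [t \mapsto M_{2ct}w(t)]$ sends $C([-T,T], h^{1/2-s}_+)$ continuously to itself on the compact interval $[-T,T]$, and then composing with $\zeta(0) \mapsto \mathcal S_B(\cdot, \zeta(0))$. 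Finally, $\mathcal S^t_{c,B}$ is a homeomorphism of $h^{1/2-s}_+$ as the composition of the homeomorphism $\mathcal S^t_B$ with the linear isometric isomorphism $M_{2ct}$. No substantive obstacle arises; the corollary is a direct consequence of the factorization above together with Proposition~\ref{S in Birkhoff continuous}.
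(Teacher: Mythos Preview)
Your argument is correct. The paper proceeds slightly differently: it simply remarks that the proof of Proposition~\ref{S in Birkhoff continuous} goes through verbatim with $\omega_n$ replaced by $\omega_{c,n}=\omega_n+2cn$, since adding the deterministic term $2cn$ preserves all the continuity and boundedness properties of the frequencies recorded in Lemma~\ref{continuity frequency map}. Your route is a clean reduction to the case $c=0$ via the factorization $\mathcal S^t_{c,B}=M_{2ct}\circ\mathcal S^t_B$, rather than a repetition of that proof. The trade-off is minor: the paper's approach is a one-line observation, while yours avoids revisiting the frequency estimates at the cost of checking the (easy) joint continuity of $(\alpha,\zeta)\mapsto M_\alpha\zeta$. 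Both are entirely elementary.
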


\smallskip

\noindent 
{\em Extension of the solution map $\mathcal S_{c}$.}
Above, we introduced the
solution map $\mathcal S_c$
on the subspace space $L^2_{r,0}$. One infers from
\eqref{formula with c} that
\begin{equation}\label{flow map BO}
\mathcal S_c = \Phi^{-1} \mathcal S_{c,B} \Phi\, : \,  
L^2_{r,0} \to C(\R, L^2_{r,0}) \, .
\end{equation}
Using the same arguments as in the proof of 
Proposition \ref{solution map S_0 for neg} 
one infers from Corollary \ref{solution map S_{c,B} for neg} 
the following results, concerning the extension
of $\mathcal S_c$ to the Sobolev space $H^{-s}_{r,0}$
with $0 < s < 1/2$.
\begin{corollary}\label{solution map S_c for neg}
The statements of Proposition \ref{solution map S_0 for neg}
continue to hold for $\mathcal S_{c, B}$ with $c \in \R$
arbitrary.
\end{corollary}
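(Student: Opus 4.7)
The plan is to mirror exactly the proof of Proposition \ref{solution map S_0 for neg}, substituting Corollary \ref{solution map S_{c,B} for neg} for Proposition \ref{S in Birkhoff continuous} and invoking the extension of the Birkhoff map provided by Theorem \ref{extension Phi}. Concretely, for $0 \le s < 1/2$ and $c \in \R$, I would define the extension of the flow by
\[
\mathcal S_c := \Phi^{-1} \circ \mathcal S_{c,B} \circ \Phi : H^{-s}_{r,0} \to C(\R, H^{-s}_{r,0}),
\]
which makes sense because Theorem \ref{extension Phi} provides a homeomorphism $\Phi : H^{-s}_{r,0} \to h^{1/2-s}_+$ and Corollary \ref{solution map S_{c,B} for neg} extends $\mathcal S_{c,B}$ to a continuous map on $h^{1/2-s}_+$.

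For the well-posedness statement corresponding to Proposition \ref{solution map S_0 for neg}$(i)$, continuity of the resulting flow map is immediate from composing the three continuous maps. The consistency with Definition \ref{def solution} reduces to the following: for any sequence $v_0^{(k)} \in H^\sigma_{r,0}$ with $\sigma > 3/2$ converging to $v_0$ in $H^{-s}_{r,0}$, the corresponding classical solutions already satisfy $\mathcal S_c(\cdot, v_0^{(k)}) = \Phi^{-1} \mathcal S_{c,B}(\cdot, \Phi(v_0^{(k)}))$ in view of \eqref{flow map BO}; since $\Phi(v_0^{(k)}) \to \Phi(v_0)$ in $h^{1/2-s}_+$ by the continuity of $\Phi$, and since $\mathcal S_{c,B}$ sends convergent sequences in $h^{1/2-s}_+$ to uniformly convergent curves in $C(\R, h^{1/2-s}_+)$, applying $\Phi^{-1}$ yields uniform convergence in $C(\R, H^{-s}_{r,0})$ to the extension, as required.

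For the a priori bound corresponding to Proposition \ref{solution map S_0 for neg}$(ii)$, the formula \eqref{evolution S_{c, B}} shows that each Birkhoff coordinate evolves as $\zeta_n(t) = \zeta_n(0) e^{i t \omega_{c,n}(\zeta(0))}$, so $|\zeta_n(t)| = |\zeta_n(0)|$ for every $t \in \R$ and hence $\|\mathcal S_{c,B}^t(\zeta)\|_{1/2-s} = \|\zeta\|_{1/2-s}$. Combining this conservation with the estimate $\|u\|_{-s} \le F_s(\|\Phi(u)\|_{1/2-s})$ from Theorem \ref{extension Phi}$(ii)$, applied to $u(t) := \mathcal S_c(t, u(0))$, yields
\[
\sup_{t\in \R} \|\mathcal S_c^t(u(0))\|_{-s} \le F_s(\|\Phi(u(0))\|_{1/2-s}).
\]

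There is essentially no obstacle beyond bookkeeping, since the only difference from the $c = 0$ case is the shift of the frequencies $\omega_{c,n} = \omega_n + 2cn$, which modifies the phase of each Birkhoff coordinate but affects neither the preservation of the $h^{1/2-s}_+$-norm under the flow nor the continuity of $\mathcal S_{c,B}$; both properties are already incorporated in Corollary \ref{solution map S_{c,B} for neg}. The mildest subtlety worth stating explicitly is that the invariance of the affine space $H^{-s}_{r,c}$ under the flow map of \eqref{BO} claimed in Theorem \ref{Theorem 1} is built in through the conjugation relation \eqref{formula with c}, which reduces the study of $\mathcal S$ on $H^{-s}_{r,c}$ to that of $\mathcal S_c$ on $H^{-s}_{r,0}$.
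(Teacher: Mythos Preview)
Your proposal is correct and follows exactly the approach the paper takes: the paper itself gives no explicit proof, merely stating that the same arguments as in Proposition \ref{solution map S_0 for neg}, combined with Corollary \ref{solution map S_{c,B} for neg}, yield the result. Your write-up spells out precisely those arguments, and you correctly interpret the statement as concerning $\mathcal S_c$ (the paper's ``$\mathcal S_{c,B}$'' in the corollary statement is a typo, as the surrounding text makes clear).
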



\section[Proofs]{Proofs Theorem \ref{Theorem 1}, Theorem \ref{Theorem 3}, and Theorem \ref{Theorem 4}} \label{Proofs of main results}

\noindent
{\em Proof of Theorem \ref{Theorem 1}.}
Theorem \ref{Theorem 1} is a straightforward consequence
of Proposition \ref{solution map S_0 for neg} and 
Corollary \ref{solution map S_c for neg}.
\hfill $\square$

\medskip
\noindent
{\em Proof of Theorem \ref{Theorem 3}.}
We argue similarly as in the proof of \cite[Theorem 2]{GK}.
Since the case $c \ne 0$ is proved be the same 
arguments we only consider the case $c=0$.
Let $u_0 \in H^{-s}_{r,0}$ with $0\le s < 1/2$
and let $u(t):= \mathcal S_0(t, u_0)$. By formula  
\eqref{formula for S_B}, 
$\zeta(t) := \mathcal S_B(t, \Phi(u_0))$ evolves
on the torus ${\rm{Tor}}(\Phi(u_0))$, defined by \eqref{def tor}.\\
$(i)$ Since ${\rm{Tor}}(\Phi(u_0))$ 
is compact in $h^{1/2 -s}_+$
and $\Phi^{-1}: h^{1/2 -s}_+ \to H^{-s}_{r,0}$
is continuous, $\{ u(t) \ : \ t \in \R \}$
is relatively compact in $H^{-s}_{r,0}$.\\
$(ii)$
In order to prove that $ t \mapsto u(t)$ is almost periodic, we appeal to Bochner's  characterization of such functions (cf. e.g. \cite{LZ}) : a bounded continuous function $f:\R \to X$
with values in a Banach space $X$ is almost periodic if and only if the set $\{ f_\tau, \tau \in \R\}$  of functions defined by 
$f_\tau (t):=f(t+\tau)$
is relatively compact in the space $\mathcal C_b(\R, X)$ of bounded continuous functions on $\R$ with values in $X$.
Since $\Phi: H^{-s}_{r,0} \to h^{1/2 -s}_+ $ 
is a homeomorphism, in the case at hand, it suffices to prove that for every sequence $(\tau_k)_{k\ge 1}$ of real numbers,  the sequence 
$f_{\tau_k}(t):=\Phi (u(t + \tau_k))$, $k\ge 1,$ 
in $C_b(\R,h^{1/2 -s}_+)$ admits a subsequence which converges uniformly in
$C_b(\R,h^{1/2 - s}_+)$. Notice that
$$f_{\tau_k}(t)=\big(\zeta_n(u(0)){\rm e}^{i\omega_n(t+\tau_k)}\big)_{n\ge 1}.$$
 By Cantor's diagonal process
and since the circle is compact, 
there exists a subsequence of $(\tau_k)_{k\ge 1}$,
again denoted by $(\tau_k)_{k\ge 1}$, so that for any $n \ge 1,$ $\lim_{k \to \infty}{\rm e}^{i\omega_n\tau_k}$ exists,
implying that the sequence of functions $f_{\tau_k}$
converges 
uniformly in $C_b(\R,h^{1/2 -s}_+)$.
\hfill $\square$

\medskip

\noindent
{\em Proof of Theorem \ref{Theorem 4}.}
Since the general case can be proved by the same arguments
we consider only the case $c = 0$.
By  \cite[Proposition B.1]{GK}, the traveling wave solutions
of the BO equation on $\T$ coincide with the one gap solutions.
Without further reference, we use notations and results from \cite[Appendix B]{GK}, where one gap potentials have
been analyzed. 
Let $u_0$ be an arbitrary one gap potential. Then
$u_0$ is $C^{\infty}-$smooth and there
exists $N \ge 1$ so that
$\gamma_N(u_0) > 0$ and $\gamma_n(u_0) = 0$ for any
$n \ne N$. 
Furthermore, the orbit of the corresponding one gap solution is given by $\{ u_0(\cdot + \tau) \, : \, \tau \in \R \}$.
Let $0 \le s < 1/2$. 
It is to prove that for any $\e > 0$ there exists $\delta > 0$
so that for any $v(0) \in H^{-s}_{r,0}$ with 
$\|v(0) - u_0 \|_{-s} < \delta$ one has
\begin{equation}\label{epsilon orbit close}
\sup_{t \in \R} \inf_{\tau \in \R}
\| v(t) - u_0( \cdot + \tau) \|_{-s} < \e \, .
\end{equation}
To prove the latter statement, we argue by contradiction.
Assume that there exists $\e > 0$, a sequence 
$(v^{(k)}(0))_{k \ge 1}$ in $H^{-s}_{r,0}$, and
a sequence $(t_k)_{k \ge 1}$  so that
$$
\inf_{\tau \in \R} 
\| v^{(k)}(t_k) - u_0( \cdot + \tau) \|_{-s} \ge \e \, , 
\quad \forall k \ge 1 \, , \qquad
\lim_{k \to \infty } \| v^{(k)}(0) - u_0 \|_{-s} = 0 \, .
$$
Since $A:= \{v^{(k)}(0) \, | \, k \ge 1 \} \cup \{ u_0 \}$
is compact in $H^{-s}_{r,0}$ and $\Phi$ is continuous, 
$\Phi(A)$ is compact in $h^{1/2-s}_+$ and
$$
\lim_{k \to \infty} 
\| \Phi(v^{(k)}(0)) - \Phi(u_0) \|_{1/2 -s} = 0 \, .
$$
It means that
$$
\lim_{k \to \infty} \sum_{n = 1}^{\infty}
n^{1 - 2s} | \zeta_n(v^{(k)}(0)) - \zeta_n(u_0) |^2 = 0 \, .
$$
Note that for any $k \ge 1$,
$$
\zeta_n(v^{(k)}(t_k)) = 
\zeta_n(v^{(k)}(0)) \ e^{i t_k \omega_n(v^{(k)}(0))}\ ,
\quad \forall n \ge 1
$$
and $\zeta_n(u(t_k)) = \zeta_n(u_0) = 0$ 
for any $n \ne N$. Hence
\begin{equation}\label{estimate for n ne N}
\lim_{k \to \infty} \sum_{n \ne  N}
n^{1 - 2s} | \zeta_n(v^{(k)}(t_k)) |^2 = 0\ , 
\end{equation}
and since 
$| \zeta_N(v^{(k)}(t_k))| = | \zeta_N(v^{(k)}(0))|$ 
one has
\begin{equation}\label{estimate for n equal N}
\lim_{k \to \infty} \big| | \zeta_N(v^{(k)}(t_k))|
-  | \zeta_N(u_0) | \big| = 0\ ,
\end{equation}
implying that 
$\sup_{k \ge 1} | \zeta_N(v^{(k)}(t_k))| < \infty$.
It thus follows that the subset $\{ \Phi(v^{(k)}(t_k))  : k \ge 1 \}$
is relatively compact in $h^{1/2 -s}$ and hence 
$\{ v^{(k)}(t_k) :  k \ge 1 \}$ relatively compact 
in $H^{-s}_{r,0}$. Choose a subsequence 
$( v^{(k_{j})}(t_{k_{j}}))_{j \ge 1}$ which
converges in $H^{-s}_{r,0}$ and denote its limit
by $w \in H^{-s}_{r,0}$. By 
\eqref{estimate for n ne N}--\eqref{estimate for n equal N}
one infers that there exists $\theta \in \R$ so that 
$$
\zeta_n(w) =0\ , \quad \forall n \ne N\, , \qquad
\zeta_N(w) = \zeta_{N}(u_0)e^{i \theta}\ .
$$
As a consequence, $w(x) = u_0(x+ \theta / N)$, contradicting
the assumption that 
$\inf_{\tau \in \R} 
\| v^{(k)}(t_k) - u_0( \cdot + \tau) \|_{-s} \ge \e$
for any $k \ge 1$.
\hfill $\square$

 
 \section[Proof of Theorem 2]{Proof of Theorem \ref{Theorem 2}}\label{Proof of Theorem 2}
 
 In this section we prove Theorem \ref{Theorem 2}. 
First we need to to make some preparations. We consider potentials of the form
$u(x)=v( {\rm e}^{ix} )+\overline {v( {\rm e}^{ix} )}$, where $v$ is a Hardy function, defined in the unit disc by
$$
v(z)= \frac{\e qz}{1-qz}\ , \quad 0< \e < q<1 \ ,  \qquad  |z|<1\ .
$$
Note that
\begin{equation}\label{normofu}
\| u\|_{-1/2}^2=2\e ^2\sum_{n=1}^\infty n^{-1}q^{2n}=-2\e ^2\log (1-q^2)\ .
\end{equation}
We want to investigate properties of the Birkhoff coordinates of $u$. To this end we consider the Lax operator $L_u= D -T_u$
for $u$ of the above form. Since for any $f\in H^1_+$ and $z \in \C$ in the unit disc
$$
T_uf (z)= \Pi \big((v + \overline v) f \big)(z) = v(z)f(z)+\e q \frac{f(z)-f(q)}{z-q} \ ,
$$
the eigenvalue equation $L_uf - \lambda f = 0$, $\lambda \in \R$, reads
\begin{equation}\label{eigeneq}
zf'(z)-\left ( \frac{\e qz}{1-qz}+\frac{\e q}{z-q}-\mu \right )f(z)=f(q)\frac{\e q}{q-z}\ ,\qquad  |z|<1\ , 
\end{equation}
where we have set $\mu:= - \lambda$. Note that if $-\mu$ is an eigenvalue, then 
the eigenfunction $f(z)$ is holomorphic in $|z| < 1$. Evaluating \eqref{eigeneq} 
in such a case at $z=0$, one infers
\begin{equation}\label{f(q)}
f(q) = \frac{\e + \mu}{\e} f(0) \ .
\end{equation}
Define for $|z|<q^{-1}$, $z\notin [0,q^{-1})$,
$$
\psi(z) :=\frac{z^{\e +\mu} (1-qz)^\e }{(q-z)^\e }\ ,
$$
with the branches of the fractional powers chosen as  
$$
\arg (z)\in (0,2\pi)\ , \quad \arg (q-z)\in (-\pi ,\pi) \ , \quad \arg (1-q z)\in \left (-\frac \pi 2, \frac \pi 2\right ) \ .
$$ 
Then \eqref{eigeneq} reads
$$
\frac{d}{dz}(\psi (z)f(z))=f(q)\frac{\e q \psi (z)}{z(q-z)}\ .
$$
As a consequence, $f(q)\ne 0$ since otherwise $\psi (z)f(z)$ would be constant and hence $\psi (z)^{-1}$  holomorphic on the whole unit disc, which is impossible since
\begin{equation}\label{compatibility}
\frac{\psi (t+i0)}{\psi (t-i0)}=
\frac{(t+i0)^{\e +\mu}(q-t+i0)^\e }{(q-t-i0)^\e (t-i0)^{\e +\mu}}
=\begin{cases} {\rm e}^{-2\pi i(\e +\mu)}, \ t \in  (0, q) \\
 {\rm e}^{-2\pi i\mu},  \quad  t \in (q, q^{-1})  \end{cases} \ 
\end{equation}
and ${\rm e}^{2\pi i\e} \ne 1$ for any $0 < \e < 1$.
For what follows it is convenient to normalize the eigefunction $f$ by $f(q)=1$. Then the eigenvalue equation reads
\begin{equation}\label{eigenvalue equation}
\frac{d}{dz}(\psi (z)f(z))= g(z) \ , \qquad  g(z):= \frac{\e q \psi (z)}{z(q-z)}, \qquad |z| < 1 \ .
\end{equation}
Our goal is to prove that for an appropriate choice of the parameters $\e$ and $q$,  $\mu = - \lambda_0 (u) $ becomes
arbitrarily large. So from now on we assume that $\mu > 0$. Note that by \eqref{eigenvalue equation} one has
$$
f(z)=\frac{1}{\psi (z)}\int_0^zg(\zeta )\, d\zeta \ , \qquad \{ |z| < q^{-1} \}\setminus [0, q^{-1}) \ .
$$
Hence $-\mu$ will be an eigenvalue of $L_u,$ 
if the right hand side of the latter expression extends to a holomorphic function
in the disc $\{ |z| < q^{-1} \}$. It means that $f$ can can be continuously extended to $z=0$ and $z=q$ and that 
\begin{equation}\label{compatibility on (0, 1/q)}
f(t+i0)=f(t-i0) \ , \qquad \forall t\in (0,q)\cup (q, q^{-1}) \ .
\end{equation}
It is straightforward to check that in the case $\mu > 0$, $f$ extends continuously to $z=0$.
Furthermore, the identity \eqref{compatibility on (0, 1/q)} is verified for any $t\in (0,q)$ since
$$\frac{g(s+i0)}{g(s-i0)}={\rm e}^{-2\pi i(\e +\mu)}=\frac{\psi (t+i0)}{\psi (t-i0)}\ ,\qquad \forall 0<s<t\ .$$
It is then straightforward to verify that $f$ extends continuously to $z=q$ and that $f(q)=1$.
Next, let us examine the condition $f(t+i0)=f(t-i0)$ for $t\in (q,1)$. Notice that 
$$g(z)=\frac{d}{dz}\Big( \frac{z^\e}{(q-z)^\e } \Big)z^\mu (1-qz)^\e  \ ,$$
so that with $h(z):=\frac{z^\e}{(q-z)^\e } \frac{d}{dz} (z^\mu (1-qz)^\e )$ one has, integrating by parts,
$$
\int_0^z g(\zeta )\, d\zeta 
=\frac{z^\e}{(q-z)^\e }z^\mu (1-qz)^\e  - \int_0^z\ h(\zeta )\, d\zeta  
= \psi(z) - \int_0^z\ h(\zeta )\, d\zeta  \ .
$$
As a consequence, the condition $f(t+i0)=f(t-i0)$, $t\in (q,q^{-1})$, reads
$$
\int_{0}^{t}\ h(\zeta +i0 )\, d\zeta =\frac{\psi (t+i0)}{\psi (t-i0)}\int_{0}^{t}\ h(\zeta -i0 )\, d\zeta \ , \qquad \forall q < t < q^{-1} \ .
$$
Since by \eqref{compatibility},
$\psi(\zeta +i0 )= {\rm e}^{-2 \pi i \mu} \psi(\zeta -i0)$ 
and $h(\zeta +i0 )= {\rm e}^{-2 \pi i \mu} h(\zeta -i0)$ 
for any $q< \zeta <  q^{-1}$, 
we conclude that the following condition 
$$
\int_0^q\ h(\zeta +i0 )\, d\zeta = {\rm e}^{-2 \pi i \mu}  \int_{0}^{q}\ h(\zeta -i0 )\, d\zeta  
$$
is necessary and sufficient for $- \mu < 0 $ to be an eigenvalue of $L_u$.
After simplification and using again that ${\rm e}^{2i\pi \e} \ne 1$, this condition reads
\begin{equation}\label{F0}
F(\mu ,\e ,q) := \int_0^q \frac{t^\e}{(q-t)^\e } \frac{d}{dt} (t^\mu (1-qt)^\e )\, dt =0
\end{equation}
or
\begin{equation}\label{F1}
F(\mu ,\e ,q) =\int_0^q \frac{t^{\e +\mu}(1-qt)^\e }{(q-t)^\e } \left (\frac \mu t -\frac{\e q}{1-qt}\right )\, dt =0\ .
\end{equation}
Notice that  if $\mu \geq \frac{\e q^2}{1-q^2}$, then
the latter integrand is strictly positive for any $0 < t  < q$. In particular, one has
\begin{equation}\label{Fpositive}
F \Big(  \frac{\e q^2}{1-q^2}  ,\e ,q \Big) > 0\ .
\end{equation}
On the other hand, let us fix $\mu >0$ and study the limit of $F(\mu ,\e ,q)$ as $(\e, q) \to (0, 1)$. 
Clearly, one has 
\begin{equation}\label{F2}
\lim_{(\e, q) \to (0, 1) }\int_0^q\frac{t^{\e +\mu}(1-qt)^\e }{(q-t)^\e } \frac \mu t \, dt 
=  \int_0^1 \mu t^{\mu -1}\, dt =1\ .
\end{equation}
To compute the limit of the remaining part of $F(\mu ,\e ,q)$ is more involved. 
For any given fixed positive parameter $1 - q^2 < \theta < 1$, split the integral
$$I(\mu ,\e ,q):=\int_0^q \frac{t^{\e +\mu}(1-qt)^\e}{(q-t)^\e}\, 
\frac{\e q}{1-qt}\, dt $$
into three parts,
$$
I_1(\mu ,\e ,q; \theta) +I_2(\mu ,\e ,q; \theta) + I_3(\mu ,\e ,q)\ := \int _0^{q(1-\theta)}+\int_{q(1-\theta )}^{q^3}+\int_{q^3}^q \ .
$$
It is easy to check that
$$0\leq I_1(\mu ,\e ,q; \theta)  \leq  C_1(\theta) \e $$
and that with the change of variable $t := q- q(1-q^2)y$ in $I_3$, one has
$$
0 \le I_3(\mu ,\e ,q) 
=\e q^{\mu +2}\int_0^1(1-(1-q^2))^{\e +\mu}(1+q^2y)^\e y^{-\e }\frac{dy}{1+q^2y} \le C_3 \e \ .
$$
Using the same change of variable in $I_2$, we obtain
$$
I_2(\mu ,\e ,q; \theta) 
=\e q^{\mu +2}\int_1^{\theta /(1-q^2)}(1-(1-q^2)y)^{\e +\mu} \frac{dy}{y^{\e }(1+q^2y)^{1-\e }} \ .
$$
Note that for $1 \le y \le \theta /(1-q^2)$, one has
$1-\theta \leq 1-(1-q^2)y\leq 1$, and hence 
$$
(1-\theta)^{\mu+1} \leq (1-(1-q^2)y)^{\mu + \e} \leq 1 \ .
$$ 
Since
$$
\frac{1+y}{2} \leq y \leq  1+ y\ ,\qquad  q^2(1+y) \leq 1+q^2y \leq 1+y ,
$$
we then infer that 
$$
\e q^{\mu +2}(1-\theta)^{\mu+1} \int_1^{\theta /(1-q^2)}\frac{dy}{1+y} \ \leq \ I_2 \ \leq  \ 
\e q^{2\e +\mu }2^\e \int_1^{\theta /(1-q^2)}\frac{dy}{1+y}\ .
$$
Using that as $q \to 1,$
$$
 \frac{1}{ - \log(1 - q^2)} \int_1^{\theta /(1-q^2)}\frac{dy}{1+y} =  1 - \frac{\log((1-q^2 + \theta)/2)}{\log(1-q^2)}  \to 1
$$
we then obtain
$$
(1-\theta )^{\mu+1} \leq 
\liminf_{(\e, q) \to (0, 1)}  \frac{I_2(\mu ,\e ,q; \theta) }{-\e \log (1-q^2)} 
\le 
\limsup_{(\e, q) \to (0, 1)} \frac{I_2(\mu ,\e ,q; \theta) }{-\e \log (1-q^2)}\leq 1\ .
$$
Summarizing,  we have proved that for any $0 < \theta < 1$,
$$
(1-\theta )^{\mu+1} \leq \liminf_{(\e, q) \to (0, 1)} \frac{I(\mu, \e ,q)}{-\e \log (1-q^2)} \ \le \ 
\limsup_{(\e, q) \to (0, 1)} \frac{I(\mu, \e ,q)}{-\e \log (1-q^2)}\leq 1\ .
$$
Letting $\theta \to 0$, we conclude that
\begin{equation}\label{F3}
I(\mu, \e ,q)= -\e \log(1-q^2) (1+o(1))\to +\infty 
\end{equation}
for $(\e, q)$ satisfying
\begin{equation}\label{Ftoinfty}
\e \log (1-q^2)\to -\infty \ .
\end{equation}
Therefore, if  \eqref{Ftoinfty} holds, then
by \eqref{F1},  \eqref{F2},  and \eqref{F3} 
$$ 
F(\mu ,\e ,q)\to -\infty \ , \qquad \forall \mu >0\ .
$$
For any $k \ge 1,$ let
$$
q_k^2 :=1 - {\rm e}^{-\e_k^{-3/2}} 
$$
with $0 < \e_k < q_k$ so small that
\begin{equation}\label{Fnegative}
F(k, \e_k, q_k) <0 , \qquad 
\frac{\e_k q_k^2}{1-q_k^2}=\e _k{\rm e}^{\e_k^{-3/2}}(1-{\rm e}^{-\e_k^{-3/2}}) >k \ .
\end{equation}
Set 
\begin{equation}\label{definition u^{(k)}}
u^{(k)}(x) := 2{\rm Re}\big(\frac{\e_k q_k{\rm e}^{ix}}{1-q_k{\rm e}^{ix}}\big)\ .
\end{equation}
\begin{lemma}\label{ill1}
For any $k \ge 1$, $u^{(k)} \in \bigcap_{n \ge 1}H^{n}_{r,0}$, and 
$\lim_{k \to \infty} \| u^{(k)} \|_{-1/2} = 0$ as well as $\lim_{k \to \infty} \lambda_0(u^{(k)}) = - \infty$.
\end{lemma}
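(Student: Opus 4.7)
The plan is to verify the three claims of Lemma \ref{ill1} by exploiting the analysis of $F(\mu,\e,q)$ carried out just before the lemma, together with elementary observations about the Fourier series of $u^{(k)}$.

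First, I would handle the regularity and the zero-mean condition directly from the definition \eqref{definition u^{(k)}}. Expanding the geometric series gives
\[
u^{(k)}(x) = \sum_{n \ge 1} 2\e_k q_k^n \cos(nx),
\]
so the Fourier coefficients decay geometrically (since $|q_k|<1$) and no zeroth mode is present. Hence $u^{(k)}$ is $C^\infty$, its mean vanishes, and therefore $u^{(k)} \in \bigcap_{n\ge 1} H^n_{r,0}$.

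Next I would compute $\| u^{(k)} \|_{-1/2}$. By \eqref{normofu} combined with the choice $q_k^2 = 1-{\rm e}^{-\e_k^{-3/2}}$, one has
\[
\| u^{(k)} \|_{-1/2}^2 = -2\e_k^2 \log(1-q_k^2) = 2\e_k^{1/2}.
\]
To show $\e_k\to 0$, note that the requirement $\e_k {\rm e}^{\e_k^{-3/2}}(1-{\rm e}^{-\e_k^{-3/2}}) > k$ from \eqref{Fnegative} forces $\e_k$ to be smaller and smaller as $k \to \infty$ (the factor ${\rm e}^{\e_k^{-3/2}}$ must grow faster than $1/\e_k$). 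Hence $\| u^{(k)} \|_{-1/2}^2 = 2\e_k^{1/2} \to 0$.

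For the spectral claim, the strategy is to apply the intermediate value theorem to $\mu \mapsto F(\mu, \e_k, q_k)$ on the interval $[k, \mu^*_k]$ with $\mu^*_k := \frac{\e_k q_k^2}{1-q_k^2}$. By \eqref{Fnegative}, $F(k,\e_k,q_k)<0$ and $\mu^*_k > k$, while by \eqref{Fpositive}, $F(\mu^*_k,\e_k,q_k) > 0$. Since the integrand in \eqref{F1} is continuous in $\mu$ uniformly on $(0,q)$, $F(\cdot,\e_k,q_k)$ is continuous on $(0,\infty)$, so there exists $\mu_k \in (k,\mu^*_k)$ with $F(\mu_k,\e_k,q_k) = 0$. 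By the equivalence established in the derivation of \eqref{F0}, namely that for $\mu>0$, $F(\mu,\e,q)=0$ is necessary and sufficient for $-\mu$ to be an eigenvalue of $L_u$, this means $-\mu_k \in \mathrm{spec}(L_{u^{(k)}})$, and consequently $\lambda_0(u^{(k)}) \le -\mu_k < -k$, giving $\lambda_0(u^{(k)}) \to -\infty$.

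The main subtlety is not a genuine obstacle but a bookkeeping point: one must check that the condition $F(\mu,\e_k,q_k)=0$, derived under the normalization $f(q)=1$, really does produce a nontrivial element of the $L^2$-spectrum of the (self-adjoint) Lax operator $L_{u^{(k)}}$, rather than a spurious zero. This follows from the construction in \eqref{eigenvalue equation}: once the matching conditions across $(0,q)$ and $(q,q^{-1})$ are satisfied, $\psi(z)f(z) = \int_0^z g(\zeta)\,d\zeta$ extends holomorphically to $\{|z|<q^{-1}\}$, producing a genuine holomorphic eigenfunction on the unit disc which, as a trace on $\T$, lies in $H_+^{1-s}$ and is nonzero since $f(q)=1$.
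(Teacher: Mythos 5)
Your proof is correct and takes essentially the same route as the paper: Fourier expansion for the regularity and zero mean, formula \eqref{normofu} together with the choice $q_k^2=1-{\rm e}^{-\e_k^{-3/2}}$ for the $H^{-1/2}$-norm, and \eqref{Fpositive}, \eqref{Fnegative} combined with the characterization of eigenvalues via zeros of $F(\cdot,\e,q)$ to get $\lambda_0(u^{(k)})<-k$. You also spell out two points the paper leaves implicit --- that the second condition in \eqref{Fnegative} forces $\e_k\to 0$, and the intermediate value argument yielding a zero of $F(\cdot,\e_k,q_k)$ in $\big(k,\tfrac{\e_k q_k^2}{1-q_k^2}\big)$ --- which is welcome extra care.
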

\begin{proof}
Expanding $u^{(k)}$, $k \ge 1$, in its Fourier series, it is straightforward to check that $u^{(k)} \in \bigcap_{n \ge 1}H^{n}_{r,0}$.
By \eqref{normofu} we have
$\| u^{(k)}\|_{H^{-1/2}}^2=\sqrt{\e _k} \to 0$ and by \eqref{Fpositive}, \eqref{Fnegative}, 
$\lambda_0(u^{(k)})<-k$ and hence $\lim_{k \to \infty} \lambda_0(u^{(k)}) = -\infty $.
\end{proof}
In a next step we prove that for $u$ of the form $u = 2{\rm Re}\big(\frac{\e q{\rm e}^{ix}}{1-q{\rm e}^{ix}}\big)$,
$L_u$ has only one negative eigenvalue. More precisely the following holds.
\begin{lemma}\label{ill2} 
For any $0 < \e < q < 1,$ $F(\cdot, \e, q)$ has precisely one zero in $\R_{>0}$.
It means that $\lambda_0(u)$ is the only negative eigenvalue of $L_u$ and thus
$\lambda_1(u) \ge 0$. Furthermore,  $\lambda_1(u) = 1 - \sum_{k \ge 2} \gamma_k(u) \le 1$ and 
\begin{equation}\label{estimgamma}
0 \le \sum_{n=2}^\infty \gamma_n(u)\leq 1 \quad  \text{ and } \quad \gamma_n(u) > 0 \ , \quad \forall n \ge 2.
\end{equation}
\end{lemma}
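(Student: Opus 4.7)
The four assertions collapse to two independent claims: $(A)$ uniqueness of the positive zero of $F(\cdot ,\e ,q)$, and $(B)$ the strict positivity $\gamma_n(u) > 0$ for every $n \ge 2$. Granting $(A)$, the operator $L_u$ has $\lambda_0$ as its only negative eigenvalue, so $\lambda_1(u) \ge 0$. The identity $\lambda_n(u) = n + \lambda_0(u) + \sum_{k=1}^n \gamma_k(u)$ (from the definition \eqref{def nth gap}) combined with \eqref{identity for lambda_0} then yields $\lambda_1(u) = 1 + \lambda_0(u) + \gamma_1(u) = 1 - \sum_{k \ge 2} \gamma_k(u)$, and the sandwich $0 \le \lambda_1(u) \le 1$ together with $\gamma_k \ge 0$ produces \eqref{estimgamma}.

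For $(A)$, existence of a zero follows by combining \eqref{Fpositive} with the direct evaluation
\[
F(0 ,\e ,q) \;=\; -\e q \int_0^q \frac{t^\e (1-qt)^{\e -1}}{(q-t)^\e}\, dt \;<\; 0,
\]
and the intermediate value theorem locates a root in $\bigl(0,\tfrac{\e q^2}{1-q^2}\bigr)$. For uniqueness, I would recast $F(\mu ,\e ,q)=0$, via \eqref{F1}, as the fixed-point identity
\[
\mu \;=\; \mathbb{E}_\mu[w] \, , \qquad w(t) := \frac{\e q t}{1-qt},
\]
where $\mathbb{E}_\mu$ denotes expectation with respect to the probability density $\rho_\mu(t) \propto t^{\e + \mu -1}(q-t)^{-\e}(1-qt)^\e$ on $(0,q)$. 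The exponential-family structure in $\mu$ gives the clean formula $\partial_\mu \mathbb{E}_\mu[w] = \mathrm{Cov}_\mu(w, \log t)$, so uniqueness reduces to the strict inequality $\mathrm{Cov}_\mu(w, \log t) < 1$ for every $\mu > 0$. This is the step I expect to be the hardest: the FKG/Chebyshev inequality yields only the lower bound $\mathrm{Cov}_\mu(w, \log t) \ge 0$ because both $w$ and $\log t$ are strictly increasing on $(0,q)$, so a capping by $1$ requires finer information on the shape of $\rho_\mu$. A promising route is to exploit the rational decomposition $w(t) = \tfrac{\e q^2}{1-q^2} - \tfrac{\e q(q-t)}{(1-qt)(1-q^2)}$ to rewrite $F$ and $\partial_\mu F$ as combinations of beta-type integrals, invoke a contiguous relation for ${}_2F_1$, and thereby express $\partial_\mu F$ at a purported zero as a single manifestly positive quantity.

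For $(B)$, by \eqref{vanishing gamma} it suffices to show $\langle 1 \vert f_n \rangle = \overline{f_n(0)} \ne 0$ for $n \ge 2$. Evaluating the eigenvalue equation \eqref{eigeneq} at $z = 0$ gives the universal relation $(\e + \mu_n) f_n(0) = \e f_n(q)$ with $\mu_n := -\lambda_n$. That $f_n(q) \ne 0$ was essentially already established in deriving the compatibility condition for \eqref{eigenvalue equation}: if $f_n(q) = 0$ then $(\psi f_n)' \equiv 0$, so $f_n = c \psi^{-1}$, and since $\psi^{-1}$ inherits the branch cut inside the unit disc described in \eqref{compatibility}, this forces $c = 0$, contradicting $\|f_n\|=1$. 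The condition $\lambda_n \ne \e$ is obtained by the same argument applied to $\mu = -\e$: a hypothetical eigenvalue $\lambda_n = \e$ would force $f_n(q) = 0$ via the relation above, hence $f_n \equiv 0$. Thus $f_n(0) = \tfrac{\e}{\e - \lambda_n} f_n(q) \ne 0$ for every $n \ge 0$, which in fact yields the stronger conclusion $\gamma_n(u) > 0$ for all $n \ge 1$.

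The main obstacle is therefore the uniqueness step in $(A)$: assertions $(B)$, the sign of $\lambda_1$, and the algebraic identity for $\sum_{n \ge 2} \gamma_n$ are either immediate algebraic consequences of $(A)$ or follow from a careful but routine inspection of \eqref{eigeneq} at $z = 0$, while controlling $\mathrm{Cov}_\mu(w, \log t)$ appears to genuinely require the hypergeometric structure underlying $F$.
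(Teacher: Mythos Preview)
Your treatment of $(B)$ and of the deductions from $(A)$ (the sign of $\lambda_1$, the identity $\lambda_1 = 1 - \sum_{k\ge 2}\gamma_k$, and the bound \eqref{estimgamma}) is correct and essentially matches the paper's argument; your handling of the degenerate case $\lambda_n = \e$ is in fact more explicit than the paper's.

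The genuine gap is the uniqueness step in $(A)$. You have correctly reformulated $F(\mu,\e,q)=0$ as $\mu = \mathbb E_\mu[w]$ and reduced uniqueness to the inequality $\mathrm{Cov}_\mu(w,\log t) < 1$, but you have not proved this inequality, and the hypergeometric route you sketch is neither carried out nor obviously tractable. As it stands the proposal does not establish uniqueness.

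The paper avoids the covariance estimate entirely by a short integration-by-parts trick. Starting from \eqref{F0} and using $q^\mu(1-q^2)^\e - t^\mu(1-qt)^\e$ as antiderivative, one obtains
\[
F(\mu,\e,q)=\e q\int_0^q \frac{t^\e}{(q-t)^\e}\,\frac{q^\mu(1-q^2)^\e - t^\mu(1-qt)^\e}{t(q-t)}\,dt .
\]
Differentiating this formula in $\mu$ and then subtracting $F(\mu,\e,q)\log q$ at a purported zero $\mu$ gives
\[
\partial_\mu F(\mu,\e,q)=\e q\int_0^q \frac{t^\e}{(q-t)^\e}\,\frac{t^\mu(1-qt)^\e}{t(q-t)}\,\log\Bigl(\frac{q}{t}\Bigr)\,dt > 0 ,
\]
which immediately forces at most one zero in $\R_{>0}$. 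This is exactly the ``single manifestly positive quantity'' you were hoping for, obtained by an elementary manipulation rather than through contiguous relations for ${}_2F_1$.
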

\begin{proof}
The proof relies on an alternative formula for $F(\mu, \e ,q)$, obtained from \eqref{F0} by integrating by parts.
Choosing $q^\mu (1-q^2)^\e -t^\mu (1-qt)^\e$ as antiderivative of $ \frac{d}{dt} (t^\mu (1-qt)^\e )$ one gets
\begin{equation}\label{alterF}
F(\mu, \e ,q)=\e q\int_0^q \frac{t^\e}{(q-t)^\e } \frac{ q^\mu (1-q^2)^\e -t^\mu (1-qt)^e }{t(q-t)}\, dt \ .
\end{equation}
Consequently,
$$\partial_\mu F (\mu, \e ,q)=\e q\int_0^q \frac{t^\e}{(q-t)^\e } \frac{ q^\mu (1-q^2)^\e \log q -t^\mu (1-qt)^\e \log t }{t(q-t)}\, dt \ .$$
Assume that $F(\mu, \e ,q)=0$ for some $\mu > 0$. Subtracting $F(\mu, \e ,q)\log q $ from the above expression for $\partial_\mu F (\mu, \e ,q)$, 
we infer from \eqref{alterF} that
$$\frac{\partial F}{\partial \mu}(\mu, \e ,q)=\e q\int_0^q \frac{t^\e}{(q-t)^\e } \frac{ t^\mu (1-qt)^\e }{t(q-t)}\log \left (\frac qt \right )\, dt >0 \ .$$
This implies that $F(\cdot ,\e ,q)=0$ cannot have more than one zero in $\mathbb R_{>0}$. It means that $\lambda_0(u)$ is the only negative
eigenvalue of $L_u$ and hence $\lambda_1(u) \ge 0$. Since by \eqref{formula for lambda_n},
 $\lambda_n(u) = n - \sum_{k \ge n +1} \gamma_k(u)$ for any $n \ge 0,$
it follows that $0 \le \sum_{k=2}^\infty \gamma_k(u)\leq 1.$
For any $n \ge 0$, denote by $\tilde f_n(\cdot, u)$ the eigenfunction of $L_u$, corresponding to $\lambda_n(u)$, normalized by $\tilde f_n(q, u) = 1$.
By \eqref{f(q)}, it then follows that for any $n \ge 1$, $\langle \check f_n(\cdot, u) | 1 \rangle = \tilde f_n(0, u) \ne 0$
and hence by \eqref{vanishing gamma} that $\gamma_n(u) > 0.$
\end{proof}
In a next step, given an arbitrary potential $u \in \bigcap_{n \ge 1}H^{n}_{r,0}$, we want to express the Fourier coefficient $\widehat u (1) = \langle u | {\rm e}^{ix} \rangle $
in terms of the Birkhoff coordinates $\zeta_n(u)$, $n \ge 1$, of $u$.
Denote by $(f_p)_{p\ge 0}$ the orthonormal basis of eigenfunctions of $L_u$ with our standard normalization
$$
\la f_0\vert 1\ra >0\ ,\qquad  \la f_{n+1}\vert Sf_n\ra >0\ ,\quad  \forall  n \ge 0.
$$
Then we get
\begin{equation}\label{formula widehat u 1}
\widehat u (1)  = \sum_{p=0}^\infty \la u\vert f_p\ra \la f_p\vert {\rm e}^{ix}\ra 
=\sum_{p=0}^\infty -\lambda_p\la 1\vert f_p\ra \sum_{n=0}^\infty \la S^*f_p\vert f_n\ra \la f_n\vert 1\ra \, .
\end{equation}
The following lemma provides a formula for $\widehat u (1)$ in terms of the Birkhoff coordinates $\zeta_n(u)$, $n \ge 1$.
To keep the exposition as simple as possible we restrict to the case where $\gamma_n(u) > 0$ for any $n \ge 1$.
\begin{lemma}\label{ill3}
For any  $u \in \bigcap_{n \ge 1}H^{n}_{r,0}$ with $\gamma_n(u) > 0$ for any $n \ge 1$, 
\begin{equation}\label{c1}
\widehat u (1)  =-\sum_{n=0}^\infty \sqrt{\frac{\mu_{n+1}(u)\kappa_n(u)}{\kappa_{n+1}(u)}}\zeta_{n+1}(u)\overline {\zeta_n(u)}\ .
\end{equation}
\end{lemma}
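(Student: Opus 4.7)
The plan is to collapse the double sum in \eqref{formula widehat u 1} to a single series in $n$ by means of a shift identity for the eigenfunctions $f_n$ of $L_u$. Concretely, I would first establish
\begin{equation*}
L_u(Sf_n)\,=\,(\lambda_n+1)\,Sf_n\,-\,\langle Sf_n\vert u\rangle\cdot 1\,,\qquad n\ge 0\,,
\end{equation*}
where $S$ denotes multiplication by $e^{ix}$ on $H_+$. Its proof is a short, direct computation in two pieces: the intertwining $D(Sf_n)=S(D+1)f_n$ is just the Leibniz rule applied to $e^{ix}f_n$, while $T_u(Sf_n)-S\,T_uf_n=\widehat{uf_n}(-1)\cdot 1=\langle Sf_n\vert u\rangle\cdot 1$ records the Fourier mode at $-1$ of $uf_n$ that the Szeg\H o projection $\Pi$ reintroduces when one commutes $T_u$ past $S$; using $u=\overline u$ one checks $\widehat{uf_n}(-1)=\langle Sf_n\vert u\rangle$. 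Subtracting yields the shift identity.

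Next, taking the inner product of the shift identity with $f_p$ and using the self-adjointness of $L_u$ gives
$(\lambda_p-\lambda_n-1)\,\langle Sf_n\vert f_p\rangle=-\langle Sf_n\vert u\rangle\,\langle 1\vert f_p\rangle$.
Specializing to $p=n+1$, recalling the standard normalization $\langle f_{n+1}\vert Sf_n\rangle>0$ and setting $\mu_{n+1}(u):=\langle f_{n+1}\vert Sf_n\rangle^2$ as in \cite{GK}, and using $\langle 1\vert f_{n+1}\rangle=\sqrt{\kappa_{n+1}}\,\zeta_{n+1}$ together with $|\zeta_{n+1}|^2=\gamma_{n+1}$, I would obtain, after complex conjugation,
\begin{equation*}
\langle u\vert Sf_n\rangle\,=\,-\sqrt{\mu_{n+1}/\kappa_{n+1}}\,\zeta_{n+1}\,.
\end{equation*}
The standing assumption $\gamma_n(u)>0$ for all $n\ge 1$ is essential here, since it simultaneously guarantees $\zeta_{n+1}\ne 0$, $\lambda_n+1\notin\mathrm{spec}(L_u)$, and the legitimacy of dividing by $\gamma_{n+1}$.

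To finish, I would substitute $\langle S^*f_p\vert f_n\rangle=\overline{\langle Sf_n\vert f_p\rangle}=-\overline{\langle Sf_n\vert u\rangle}\,\langle f_p\vert 1\rangle/(\lambda_p-\lambda_n-1)$ back into \eqref{formula widehat u 1} and exchange the order of summation. The inner $p$-sum then becomes $\sum_p \lambda_p\,|\langle 1\vert f_p\rangle|^2/(\lambda_p-\lambda_n-1)$; splitting $\lambda_p=(\lambda_n+1)+(\lambda_p-\lambda_n-1)$ and invoking Parseval's identity $\sum_p|\langle 1\vert f_p\rangle|^2=\|1\|^2=1$ reduces this to $1+(\lambda_n+1)\,\mathcal H_{-(\lambda_n+1)}(u)$, and the crucial vanishing $\mathcal H_{-(\lambda_n+1)}(u)=0$ follows at once from the product formula \eqref{product}, since the $(n+1)$-st factor reduces to $1-\gamma_{n+1}/\gamma_{n+1}=0$. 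Thus the double sum collapses to $\widehat u(1)=\sum_n\langle u\vert Sf_n\rangle\,\langle f_n\vert 1\rangle$, and inserting the formula for $\langle u\vert Sf_n\rangle$ together with $\langle f_n\vert 1\rangle=\sqrt{\kappa_n}\,\overline{\zeta_n}$ produces precisely \eqref{c1}. The main technical point is the shift identity: although short, it hinges on the nonlocality of the Toeplitz operator and the correct identification of the rank-one residual term; the remaining steps are algebraic bookkeeping, considerably streamlined by the hypothesis $\gamma_n(u)>0$.
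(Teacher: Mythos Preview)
Your proposal is correct and follows essentially the same route as the paper's proof: the paper cites the formula $M_{np}=\sqrt{\mu_{n+1}/\kappa_{n+1}}\,\langle f_p\vert 1\rangle\,\zeta_{n+1}/(\lambda_p-\lambda_n-1)$ from \cite[(4.7), (4.9)]{GK}, whereas you rederive it via the shift identity $L_u(Sf_n)=(\lambda_n+1)Sf_n-\langle Sf_n\vert u\rangle\cdot 1$ (which is precisely the identity underlying those formulas in \cite{GK}). From that point on---substitution into \eqref{formula widehat u 1}, the split $\lambda_p=(\lambda_p-\lambda_n-1)+(\lambda_n+1)$, Parseval, and the vanishing $\mathcal H_{-\lambda_n-1}(u)=0$ from the product formula---your argument and the paper's are identical.
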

\begin{proof}
Recall that in the case $\gamma_{n+1}(u) \ne 0$, the matrix coefficients $M_{np} :=  \la S^*f_p\vert f_n\ra$ are given by \cite[formulas (4.7), (4.9)]{GK}.
Using that $|\langle f_{n+1} | 1\rangle|^2   = \kappa_{n+1} \gamma_{n+1}$ and $\zeta_{n+1} = \langle 1 | f_{n+1} \rangle / \sqrt{\kappa_{n+1}}$, one then gets
$$
M_{np}=\sqrt{\frac{\mu_{n+1}}{\kappa_{n+1}}}\frac{\la f_p\vert 1\ra }{\lambda_p-\lambda_n-1}\zeta_{n+1}\ .
$$
Substituting this formula into the expression \eqref{formula widehat u 1} for $\widehat u (1) $ yields
$$
\widehat u (1)  =-\sum_{n=0}^\infty \sqrt{\frac{\mu_{n+1}\kappa_n}{\kappa_{n+1}}}\zeta_{n+1}\overline {\zeta_n}
\sum_{p=0}^\infty \frac{\lambda_p|\la 1\vert f_p\ra |^2}{\lambda_p-\lambda_n-1}\ ,
$$
where for convenience, $\zeta_0:=1$.
Since\footnote{We are grateful to Louise Gassot for drawing our attention to this cancellation.}
$$\sum_{p=0}^\infty \frac{\lambda_p|\la 1\vert f_p\ra |^2}{\lambda_p-\lambda_n-1}=\sum_{p=0}^\infty |\la 1\vert f_p\ra |^2+(\lambda_n+1)\mathcal H _{-\lambda_n-1}\ ,$$
and $\mathcal H _{-\lambda_n-1} = 0$ due to  $\gamma_{n+1} > 0$, 
formula \eqref{c1} follows.
\end{proof}

Let us now consider the sequence $\mathcal S_0(t, u^{(k)})$, $k \ge 1$, with $u^{(k)}$ given by \eqref{definition u^{(k)}}.
Since $\gamma_1(u^{(k)})> k $ (Lemma \ref{ill1}), $\gamma_n(u^{(k)})> 0$, $n \ge 2$, (Lemma \ref{ill2}), and since
 $\gamma_n$, $n \ge 1$, are conserved quantities of \eqref{BO}, it follows that 
 formula \eqref{c1} is valid for  $\mathcal S_0(t , u^{(k)})$ for any $t \in \R$  and $k \ge 1.$ Hence
\begin{equation}\label{definition xi_k}
 \xi_k(t):= \langle \mathcal S_0(t, u^{(k)})  | \ {\rm e}^{ix} \rangle 
 \end{equation}
 is given by
 $$
 \xi_k(t) =
 - \sum_{n=0}^\infty \sqrt{\frac{\mu_{n+1}(u^{(k)})\kappa_n(u^{(k)})}{\kappa_{n+1}(u^{(k)})}}
 \zeta_{n+1} ( \mathcal S_0(t, u^{(k)}))  \overline{\zeta_n( \mathcal S_0(t, u^{(k)}))}\ .
 $$
For any potential $u \in L^2_{r,0}$ and any $n\ge 1$, one has by \eqref{BOsolution}, \eqref{frequencies in Birkhoff}
$$
\zeta_n(\mathcal S_0(t, u))=\zeta_n(u){\rm e}^{it\omega_n(u)}\ ,\qquad 
\omega_n(u):=n^2-2\sum_{k=1}^\infty \min(k,n)\gamma_k(u)\ ,
$$
implying that $\omega_1(u) = 1 + 2 \lambda_0(u)$ and for any $n \ge 1,$
$$
\omega_{n+1}(u) - \omega_n(u) = 2n+1 -2 \sum_{k=n+1}^\infty \gamma_k(u) = 1 + 2 \lambda_n(u)
$$
so that, for any $n\ge 0$,
$$
\zeta_{n+1}(\mathcal S_0(t, u))\overline{\zeta_n(\mathcal S_0(t, u))}=\zeta_{n+1}(u)\overline{\zeta_n(u)}{\rm e}^{it(1+2\lambda_n(u))}\ .
$$
Therefore, we have
\begin{equation}\label{formula xi_k}
 \xi_k(t) =
 - \sum_{n=0}^\infty \sqrt{\frac{\mu_{n+1}(u^{(k)})\kappa_n(u^{(k)})}{\kappa_{n+1}(u^{(k)})}}\zeta_{n+1} (u^{(k)})\overline{\zeta_n(u^{(k)})}{\rm e}^{it(1 + 2\lambda_n(u^{(k)}))}\ .
 \end{equation}

\medskip

\noindent
{\em Proof of Theorem \ref{Theorem 2}.}
First we consider the case $c=0$.
Our starting point is formula \eqref{formula xi_k}.
Recall  the product formulas of \cite[Corollary 3.4]{GK}
$$
\kappa_0=\prod_{p \ge 1}\Big(1-\frac{\gamma_p}{\lambda_p-\lambda_0}\Big)\ ,\quad
 \kappa_n=\frac{1}{\lambda_n-\lambda_0}\prod_{1\leq p\ne n}\Big(1-\frac{\gamma_p}{\lambda_p-\lambda_n}\Big)\ ,\  \forall n\ge 1
 $$
 and (cf.  \cite[formula (4.9)]{GK})
$$
\frac{\mu_{n+1}}{\kappa_{n+1}}=\frac{\lambda_n+1-\lambda_0 }{\prod_{1\le p\ne n+1}   \Big(1-\frac{\gamma_p}{\lambda_p-\lambda_n-1}\Big) }\ .
$$
They yield
\begin{eqnarray*}
\frac{\mu_1\kappa_0}{\kappa_1}&=& \prod_{p \ge 1}\frac{1-\frac{\gamma_p}{\lambda_p-\lambda_0}}{1-\frac{\gamma_{p+1}}{\lambda_{p+1}-\lambda_0-1}}
=\prod_{p \ge 1} \Big(1-\Big(\frac{\gamma_p}{\lambda_p-\lambda_0}\Big)^2\Big)\ ,\\
 \frac{\mu_2\kappa_1}{\kappa_2}&=&\Big( 1+\frac{1}{\lambda_1-\lambda_0}  \Big)
 (1+\gamma_1 )^{-1}\prod_{p \ge 2}\frac{1-\frac{\gamma_p}{\lambda_p-\lambda_1}}{1-\frac{\gamma_{p+1}}{\lambda_{p+1}-\lambda_1-1}} \ , 
 \end{eqnarray*}
 and for any $n \ge 2$,
 $$
 \frac{\mu_{n+1}\kappa_n}{\kappa_{n+1}} = 
 \Big( 1+\frac{1}{\lambda_n-\lambda_0}  \Big)\frac{1+\frac{\gamma_1}{\lambda_n-\lambda_1}}{1+\frac{\gamma_1}{\lambda_n+1-\lambda_1}}\frac{\prod_{2\le p\ne n}
 \Big( 1-\frac{\gamma_p}{\lambda_p-\lambda_n} \Big)}{\prod_{1\le p\ne n}\Big( 1-\frac{\gamma_{p+1}}{\lambda_{p+1}-\lambda_n-1}   \Big)}\ .
$$
Since by the definition of $\gamma_1$, $\lambda_1(u^{(k)}) = \lambda_0(u^{(k)})  + 1 + \gamma_1(u^{(k)})$ 
and since by Lemma \ref{ill2}, $0 \le \lambda_1(u^{(k)}) \le 1$, it follows from Lemma \ref{ill1} that
$$
\gamma_1(u^{(k)})=\lambda_1(u^{(k)})  - ( \lambda_0(u^{(k)}) + 1)\sim -\lambda_0(u^{(k)}) \to +\infty 
$$
and 
$$
1-\Big(\frac{\gamma_1}{\lambda_1-\lambda_0}\Big)^2(u^{(k)}) 
= \big(1- \frac{\gamma_1}{\lambda_1-\lambda_0}\big)  \big(1+ \frac{\gamma_1}{\lambda_1-\lambda_0}\big) 
\sim \frac{2}{\gamma_1(u^{(k)})} \ .
$$
Furthermore by \eqref{estimgamma}, $\sup_{k \ge 1}\sum_{n\ge 2}\gamma_n(u^{(k)})\leq 1$.
One then concludes that
$$
\frac{\mu_1\kappa_0}{\kappa_1}(u^{(k)}) \sim  \frac 2{\gamma_1(u^{(k)})}\ ,\qquad
 \frac{\mu_2\kappa_1}{\kappa_2}(u^{(k)}) = O\Big( \frac{1}{\gamma_1(u^{(k)})   }\Big) \ ,
 $$
 and
 $$
 \frac{\mu_{n+1}\kappa_n}{\kappa_{n+1}}(u^{(k)}) =O(1)\ ,\quad \forall  n\ge 2.
$$
 From the above estimates, we infer that the sequence of functions $(\xi_k(\cdot))_{k \ge 1}$, given by \eqref{formula xi_k}, is uniformly bounded for $t\in \R$. 
 Assume that it converges pointwise to $0$ on an interval $I$ of positive length. Then by the dominated convergence theorem,
 $$
  \lim_{k \to \infty} \int_I\xi_k(t){\rm e}^{-it(1 + 2\lambda_0(u^{(k)}))}\, dt =  0\ .$$
 On the other hand, since
 $$
 \begin{aligned}
 \xi_k(t) & {\rm e}^{-it(1 + 2\lambda_0(u^{(k)}))} 
  =  -  \sqrt{\frac{\mu_{1}(u^{(k)})\kappa_0(u^{(k)})}{\kappa_{1}(u^{(k)})}}\zeta_{1} (u^{(k)}) \\
  & - \sum_{n\ge 1}^\infty \sqrt{\frac{\mu_{n+1}(u^{(k)})\kappa_n(u^{(k)})}{\kappa_{n+1}(u^{(k)})}}\zeta_{n+1} (u^{(k)})\overline{\zeta_n(u^{(k)})}
  {\rm e}^{it2(\lambda_n(u^{(k)}) - \lambda_0(u^{(k)}) )}
 \end{aligned}
 $$
 and $\lambda_n(u^{(k)}) - \lambda_0(u^{(k)}) \ge |\lambda_0(u^{(k)})|$ for any $n \ge 1$, the above estimates yield
 $$
 \int_I\xi_k(t){\rm e}^{-it(1 + 2\lambda_0(u^{(k)}))}\, dt=
  -\frac{\sqrt{2}\zeta_1(u^{(k)})}{\sqrt{\gamma_1(u^{(k)})}}\vert I\vert + O\Big(\frac{1}{|\lambda_0(   u^{(k)}    )|}\Big)
  $$
 which does not converge to $0$ as $k \to \infty$, since $|\zeta_1(u^{(k)})|=\sqrt{\gamma_1(u^{(k)})}$ and $|I| > 0$. 
 Hence the assumption that $(\xi_k(\cdot))_{k \ge 1}$ converges pointwise to $0$ on an interval $I$ of positive length leads
 to a contradiction and thus cannot be true. \\
Finally, let us treat the case where $c \in \R$ is arbitrary.
Consider the sequence $(u^{(k)} +c)_{k \ge 1}$ with $(u^{(k)})_{k \ge 1}$ given by \eqref{definition u^{(k)}}.
By Lemma \ref{ill1}, $\lim_{k \to \infty} (u^{(k)} +c) = c$ in $H^{-1/2}_{r,c}$.
Furthermore,  by \eqref{formula with c} - \eqref{solution map for BO with c} one has $\mathcal S(t, u^{(k)} + c) = \mathcal S_c(t, u^{(k)}) + c$
and by \eqref{frequ for c ne 0} - \eqref{evolution S_{c, B}},
$$
\zeta_n \big(\mathcal S_c(t, u^{(k)}) \big) = \zeta_n( u^{(k)}) {\rm e}^{it (2cn + \omega_n(u^{(k)}))} . 
$$
Hence with $ \xi_{k}(t)$ given by \eqref{definition xi_k}, one has 
$$
 \xi_{c, k}(t) := \langle \mathcal S(t, u^{(k)} + c)  | \ {\rm e}^{ix} \rangle =  {\rm e}^{i2ct }  \xi_{k}(t) \ , \qquad \forall k \ge 1\ .
$$
Since
 $$
 \int_I\xi_{c, k}(t) {\rm e}^{-i2ct}{\rm e}^{-it( 1 + 2\lambda_0(u^{(k)}))}\, dt 
 =  \int_I\xi_{k}(t) {\rm e}^{-it( 1 + 2\lambda_0(u^{(k)}))}\, dt \ ,
 $$
 Theorem \ref{Theorem 2} follows from the proof of the case $c=0$ treated above.
 \hfill $\square$

\medbreak

An immediate consequence of Theorem \ref{Theorem 2} is the following
\begin{corollary}\label{no extension for s=-1/2}
The Birkhoff map $\Phi$ does not continuously extend to a map $H^{-1/2}_{r,0} \to h^0_+$.
\end{corollary}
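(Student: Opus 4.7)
\smallskip

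\noindent
\emph{Proof proposal.} The plan is to argue by contradiction, using directly the sequence $(u^{(k)})_{k\ge 1}$ constructed in Section \ref{Proof of Theorem 2} (or more precisely, its underlying spectral behaviour captured in Lemma \ref{ill1}). Suppose, for contradiction, that the Birkhoff map $\Phi$ extends to a continuous map $\widetilde\Phi : H^{-1/2}_{r,0} \to h^0_+ = \ell^2_+$. Since $L_0 = D$ has spectrum $\{0,1,2,\dots\}$ with all consecutive differences equal to $1$, every gap $\gamma_n(0)$ vanishes and hence $\widetilde\Phi(0) = 0$ in $\ell^2_+$.

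The key identity to exploit is the trace-type formula for $\lambda_0$ at the bottom of the $\ell^2$-scale: combining $|\zeta_n(u)|^2 = \gamma_n(u)$ (cf.\ \eqref{product for kappa for -s}--\eqref{def coo for -s}) with \eqref{identity for lambda_0} yields
\begin{equation}\label{l2identity}
\|\Phi(u)\|_{\ell^2}^{2} \;=\; \sum_{n=1}^\infty \gamma_n(u) \;=\; -\lambda_0(u)
\end{equation}
for every $u \in H^{-s}_{r,0}$, $0 < s < 1/2$, for which $L_u$ is well defined. Apply \eqref{l2identity} to the sequence $u^{(k)}$ from \eqref{definition u^{(k)}}: by Lemma \ref{ill1}, $u^{(k)} \to 0$ in $H^{-1/2}_{r,0}$ while $-\lambda_0(u^{(k)}) \to +\infty$, so $\|\Phi(u^{(k)})\|_{\ell^2}^2 \to +\infty$.

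On the other hand, continuity of $\widetilde\Phi$ at $0$ together with $\widetilde\Phi(0) = 0$ forces $\widetilde\Phi(u^{(k)}) \to 0$ in $\ell^2_+$, and in particular $\|\widetilde\Phi(u^{(k)})\|_{\ell^2} \to 0$. Since $u^{(k)} \in \bigcap_{n\ge 1} H^n_{r,0}$, $\widetilde\Phi(u^{(k)})$ coincides with $\Phi(u^{(k)})$, giving the desired contradiction. I expect no serious obstacle here: the whole argument rests on the identity \eqref{l2identity}, which follows immediately from results already established in the excerpt, and on the fact that $\lambda_0(u^{(k)})$ blows up along a sequence tending to $0$ in $H^{-1/2}_{r,0}$, which is precisely the spectral content of Lemma \ref{ill1}. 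Thus the only delicate point is checking that $\widetilde\Phi(0) = 0$, which is a direct computation of the spectrum of $L_0 = D$.
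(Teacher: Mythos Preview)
Your argument is correct. It is also somewhat more direct than the route the paper indicates. The paper states the corollary as ``an immediate consequence of Theorem~\ref{Theorem 2}'', i.e.\ of the dynamical ill-posedness result, whereas you bypass the dynamics entirely and rely only on the spectral content of Lemma~\ref{ill1} together with the elementary trace identity $\|\Phi(u)\|_{\ell^2}^2 = \sum_{n\ge 1}\gamma_n(u) = -\lambda_0(u)$. Both approaches rest on the same sequence $(u^{(k)})_{k\ge1}$ and the blow-up $\lambda_0(u^{(k)})\to -\infty$, but your version isolates the purely static obstruction: the Birkhoff coordinates themselves become unbounded along a sequence converging to $0$ in $H^{-1/2}_{r,0}$, so no continuous extension can exist, irrespective of how solutions behave in time. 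This is a cleaner way to see the corollary, and nothing is lost by not invoking Theorem~\ref{Theorem 2} in full.
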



\appendix

\section{Restriction of $\Phi$ to $H^s_{r,0}$, $s> 0$}\label{restrictions}

The purpose of this appendix is to study the restriction of the Birkhoff map $\Phi$ to $H^s_{r,0}$ with $s > 0$ 
and to discuss applications to the flow map of the Benjamin--Ono equation.
\begin{proposition}\label{positive}
For any $s\ge 0$, the restriction of the Birkhoff map $\Phi $ to $H^s_{r,0}$
is a homeomorphism from $H^s_{r,0}$ onto $h^{s+1/2}_+$.
Furthermore, $\Phi : H^s_{r,0} \to h^{s+1/2}_+$ and its inverse $\Phi^{-1} :  h^{s+1/2}_+ \to H^s_{r,0} $
map bounded subsets to bounded subsets.
\end{proposition}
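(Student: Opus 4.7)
The strategy is to mirror, on the positive Sobolev scale, the Lax-operator machinery set up on the negative scale in Sections~\ref{extension of Phi, part 1}--\ref{extension of Phi, part 2}, taking the case $s=0$ (Theorem~\ref{main result}) as the starting point. For $u\in H^s_{r,0}$ with $s\ge 0$, the Toeplitz operator $T_u$ is bounded from $H^{t+1}_+$ to $H^{t}_+$ for every $0\le t\le s$: when $s>1/2$ this follows from the algebra property of $H^s$, while for $0\le s\le 1/2$ it follows from paramultiplication, duality, and interpolation, exactly as in Lemma~\ref{restriction of B_u to H^{1-s}} but run in the opposite direction. Combined with \eqref{identity B_u} this promotes the isomorphism $B_u$ to a bounded linear isomorphism $B_{u;t+1}:H^{t+1}_+\to H^{t}_+$ for every $0\le t\le s$, with operator norms and inverse operator norms uniformly controlled on bounded subsets of $H^s_{r,0}$.

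\textbf{Forward direction.} Bootstrapping the eigenvalue equation $B_u f_n=(\lambda_n+1+\eta_s)f_n$ through the isomorphisms $B_{u;t+1}$, starting from $f_n\in H^{1/2}_+$, I would upgrade the regularity of the eigenfunctions of $L_u$ step by step to $f_n\in H^{s+1}_+$. Complex interpolation with the isomorphism on $H^{1/2}_+$ from Lemma~\ref{basis on Sobolev scale} then extends that lemma to a bounded linear isomorphism $K_{u;t}:H^{t}_+\to h^{t}(\N_0)$ for every $-1/2\le t\le s+1$, whose norms are uniformly bounded on bounded subsets of $H^s_{r,0}$. Applying $K_{u;s}$ to $\Pi u\in H^s_+$ yields $\bigl(\langle\Pi u\mid f_n\rangle\bigr)_{n\ge 0}\in h^{s}(\N_0)$. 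Combined with the identity $\lambda_n\langle 1\mid f_n\rangle=-\langle\Pi u\mid f_n\rangle$, the asymptotics $\lambda_n=n+O(1)$ from \eqref{formula for lambda_n}, and $\kappa_n^{-1/2}=\sqrt{n}+o(1)$ coming from \eqref{product for kappa for -s}, this gives $\bigl(\zeta_n(u)\bigr)_{n\ge 1}\in h^{s+1/2}_+$ with an estimate of the form $\|\Phi(u)\|_{s+1/2}\le G_s(\|u\|_s)$ for some increasing function $G_s$.

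\textbf{Reverse direction.} Given $\zeta\in h^{s+1/2}_+$, approximate it in $h^{s+1/2}_+$ by truncations $\zeta^{(k)}$ and set $u^{(k)}:=\Phi^{-1}(\zeta^{(k)})$, which lies in $\bigcap_{n\ge 0}H^n_{r,0}$ by the finite-gap analysis of \cite{GK}. The trace formula \eqref{trace formula} yields $\|u^{(k)}\|_{0}^{2}=2\sum_n n|\zeta_n^{(k)}|^{2}\le 2\|\zeta^{(k)}\|_{1/2}^{2}$, so the sequence is bounded in $L^2_{r,0}$ and hence the operator norms of $K_{u^{(k)};s}^{-1}$ are uniformly controlled by Step~1. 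Reading the identity $\Pi u^{(k)}=-\sum_n \lambda_n\sqrt{\kappa_n}\,\zeta_n^{(k)}f_n$ through this uniform isomorphism yields $\|u^{(k)}\|_{s}\le F_s(\|\zeta^{(k)}\|_{s+1/2})$ with $F_s$ increasing. Sequential weak continuity of $\Phi^{-1}$ on $L^2_{r,0}$ (Remark~\ref{weakPhi}) then identifies the weak limit of any weakly convergent subsequence with $\Phi^{-1}(\zeta)$, so $\Phi^{-1}(\zeta)\in H^s_{r,0}$ and the bound propagates.

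\textbf{Continuity and main obstacle.} Once $\Phi$ and $\Phi^{-1}$ are shown to map bounded sets to bounded sets, continuity of both follows from the scheme of the proof of Proposition~\ref{extension Phi, part 2}(iii): the tail characterization of compact subsets (Lemma~\ref{rel compact subsets} transported to the positive scale) combined with sequential weak continuity inherited from the $L^2$ case via the compact embeddings $H^s\hookrightarrow L^2$ and $h^{s+1/2}_+\hookrightarrow h^{1/2}_+$ upgrade the weak continuity to strong continuity. The delicate point throughout is the reverse direction: the \emph{uniform} control of $\|K_{u^{(k)};s}^{-1}\|$ along the approximating sequence is the main obstacle, and it is what forces the argument to route through the trace formula in order to first control the $L^2$-norm and only then climb up the scale.
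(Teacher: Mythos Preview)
For $s\in[0,1]$ your plan matches the paper closely: Lemma~\ref{basis on Sobolev scale} (with its negative-scale parameter set to $0$) already furnishes the isomorphism $K_{u;t}:H^t_+\to h^t(\N_0)$ for all $t\in[-1,1]$ with norms uniformly controlled on $L^2$-bounded sets of $u$, and the identity $K_{u;s}(\Pi u)=\bigl(-\lambda_n\sqrt{\kappa_n}\,\zeta_n\bigr)_{n\ge0}$ together with $\lambda_n\sim n$, $\kappa_n\sim 1/n$ gives the two-sided boundedness exactly as you describe.

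For $s>1$ there is a genuine gap in your reverse direction. Your forward step shows that $B_{u;t+1}:H^{t+1}_+\to H^t_+$ is an isomorphism, and hence that $K_{u;s}$ is, \emph{only under the hypothesis that $u$ lies in a bounded subset of $H^s_{r,0}$}: for $u$ merely in $L^2_{r,0}$ the Toeplitz operator $T_u$ does not map $H^{t+1}_+$ into $H^t_+$ once $t\ge 1$, so $B_{u;t+1}$ is not even bounded into $H^t_+$. Consequently the sentence ``the sequence is bounded in $L^2_{r,0}$ and hence the operator norms of $K_{u^{(k)};s}^{-1}$ are uniformly controlled by Step~1'' is false as stated. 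The paper closes this gap by an explicit induction on the integer part $k=\lfloor s\rfloor$: it proves a separate Lemma (Lemma~\ref{L-regularity}) characterizing $f\in H^s_+$, for $s\in[k,k+1]$, by the condition $L_u^jf\in H^{s-k}_+$ for $j=0,\dots,k$, with uniformity only on $H^k_{r,0}$-bounded sets of $u$. Having already established at the previous inductive step that $\Phi^{-1}(B)$ is bounded in $H^k_{r,0}$ whenever $B$ is bounded in $h^{k+1/2}_+$, one can then legitimately invoke those uniform bounds to climb from $k$ to $s$. Your closing remark about ``climbing up the scale'' hints at this induction but does not set it up; without it the argument is circular for $s>1$.

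A smaller point: ``complex interpolation with the isomorphism on $H^{1/2}_+$'' cannot by itself extend $K_{u;t}$ beyond $t=1$; interpolation needs a second endpoint, and producing that endpoint is exactly what the $B_u$-stepping (equivalently, the paper's $L_u^j$-characterization in Lemma~\ref{L-regularity}) is for. Regularity of the individual eigenfunctions $f_n\in H^{s+1}_+$ is necessary but not sufficient for $K_{u;s+1}$ to be a bounded isomorphism.
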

\begin{proof}
The case $s=0$ is proved in \cite{GK}. We first treat the case $s\in ]0,1]$. \\
Assume that $A$ is a bounded subset of $L^2_{r,0}$.
Given $u\in A$, let $K_{u; s} $, $0 \le s \le 1$, be the linear isomorphism of Lemma \ref{basis on Sobolev scale},
$$
f\in H^s_+ \to (\la f\vert f_n \ra)_{n\ge 0}\in h^{s}(\N_0)\ , \qquad f_n \equiv f_n(\cdot, u) \ .
$$
By Lemma \ref{basis on Sobolev scale}, the operator $K_{u; s}$ is 
uniformly bounded with respect to $u \in A$. Since $L_u 1 = -\Pi u$, one has
\begin{eqnarray}\label{K_u applied to u}
K_{u, s}(\Pi u)&=&\big( \la \Pi u\vert f_n\ra \big)_{n\ge 0} = \big( -\lambda_n(u)\la 1\vert f_n\ra \big)_{n\ge 0} \nonumber \\
&=&\big( -\lambda_n(u)\sqrt{\kappa_n(u)}\zeta_n(u) \big)_{n\ge 0}
\end{eqnarray}
where for convenience we have set $\zeta_0(u):=1$. Furthermore, by the proof of Proposition \ref{extension Phi, part 1}$(i)$ 
one has that
\begin{equation}\label{equivalent lambda kappa}
\lambda_n(u)= n+o\Big(\frac 1n \Big) \ ,\qquad  \sqrt{\kappa_n(u)}=\frac 1{\sqrt n} \big(1+o(1) \big)\ , 
\end{equation}
 uniformly with respect to $u \in A$.
Therefore, for any $s\in [0,1]$,  an element $u \in L^2_{r,0}$ belongs to  $H^s_{r,0}$ 
if and only if   $K_{u, 0}(\Pi u)\in h^s(\N_0)$ or, equivalently,  $(\sqrt n\zeta_n(u))_{n\ge 1}\in h^s_+$,  implying that $\Phi (u)\in h^{s+1/2}_+$. 
Furthermore, if $A$ is a bounded subset of $H^s_{r, 0}$, then $\Phi(A)$ is a bounded subset of $h^{s+1/2}_+$. 
Conversely, if $B$ is a bounded subset of $h^{s+1/2}_+$, a fortiori it is bounded in $h^{1/2}_+$ and hence $\Phi^{-1}(B)$ is bounded in $L^2_{r,0}$. 
As a consequence, the norm of $K_{u; s}^{-1}:h^s(\N_0)\to H^s_+$ is uniformly bounded with respect to $u \in \Phi^{-1}(B)$ and hence 
$\Phi^{-1}(B)$ is bounded in $H^s_{r,0}$.\\
 Next we prove that for any $0 < s \le 1$,
 $\Phi : H^s_{r,0} \to h^{s+1/2}_+$ and its inverse are continuous. Since $\Phi:L^2_{r,0} \to h^{1/2}_+$ is a homeomorphism, 
 we infer from Rellich's theorem and the boudedness properties of $\Phi$ and its inverse, derived above, 
 that for any $s\in (0,1]$,  $\Phi : H^s_{r,0} \to h^{s+1/2}_+$ and $\Phi^{-1} : h^{s+1/2}_+ \to H^s_{r,0}$ 
 are sequentially weakly continuous.  
 As in the proof of Proposition \ref{extension Phi, part 2}$(iii)$, to prove that $\Phi : H^s_{r,0} \to h^{s+1/2}_+$  is continuous,
  it then suffices to show that $\Phi$ maps any relatively compact subset $A$ of $H^s_{r,0}$ 
 to a relatively compact subset of $h^{s+1/2}_+$ . For $s=1$, this is straightforward. Indeed, given any  bounded subset $A$ of $H^1_{r,0}$, $A$  is relatively compact in
 $H^1_{r,0}$ if and only if $\{ D \Pi u \ | \ u \in A \}$ is a relatively compact in $L^2_+$. As $\{ T_u \Pi u \ | \ u \in A \}$  is bounded in $H^1_+$, this amounts to say that 
$\{ L_u(\Pi u) \ | \ u \in A \}$ is relatively compact subset in $L^2_+$. Since 
$$
\| L_u(\Pi u)\|^2=\sum_{n \ge 0}\lambda_n(u)^2|\la 1\vert f_n(\cdot, u)\ra |^2\ ,
$$
we infer from \eqref{equivalent lambda kappa} that $ \| L_u(\Pi u)\|^2 = \| \Phi (u)\|_{3/2}^2+ R(u)\ ,$ where $R$ is a weakly continuous functional 
on $H^1_{r,0}$. As a consequence,  $\{ L_u(\Pi u) \ | u \in A \}$ is relatively compact in  $L^2_+$ if and only if  $\Phi (A)$ is
relatively compact in $h^{3/2}_+$. 
This shows that $\Phi : H^1_{r,0} \to h^{3/2}_+$  is continuous. In a similar way one proves that $\Phi^{-1} : h^{3/2}_+ \to H^1_{r,0}$ 
is continuous. This completes the proof for $s=1$.

\smallskip

 To treat the case $s\in (0,1)$, we will use the following standard variant of Lemma \ref{rel compact subsets}.
\begin{lemma}
Let $s\in (0,1)$. A bounded subset $A_+ \subset H^s_+$ is relatively compact in $H^s_+$ if and only if for any $\e >0$, there exist
$N_\e \ge 1$ and $R_\e > 0$  so that for any $f \in A_+,$ the sequence $\xi_n:=\hat f(n), n\ge 0$ satisfies 
$$
\big(\sum_{n > N_\e} n^{2s} |\xi_n)|^2\big)^{1/2} 
< \e
\, , \qquad 
\big( \sum_{0 \le n \le N_\e} n^2|\xi_n|^2 \big)^{1/2} 
< R_\e \, .
$$
The latter conditions on $(\xi_n)_{n \ge 0}$ characterize relatively compact subsets of $h^{s}(\N_0).$
\end{lemma}
We then argue as in the proof of Proposition \ref{extension Phi, part 2}$(iii)$, using the operators $K_{u; s}$ and $K_{u; s}^{-1}$, 
to complete the proof  of Proposition \ref{positive} in the case $s\in ]0,1]$.

\smallskip

In order to deal with the case $s>1$, we need the following lemma.
\begin{lemma}\label{L-regularity}
Let $k$ be a nonnegative integer and assume that $A$ is a bounded subset of $H^k_{r,0}$. 
Then, for any $u \in A$ and $s\in [k,k+1]$, an element $f\in H^k_+$ is in $H^s_+$ if and  only if for any $j=0,\dots ,k$,  $L_u^jf\in H^{s-k}_+$, with bounds
which are uniform with respect to $u \in A$. 
Furthermore, $A$ is compact in $H^s_{r,0}$ if and only if for any $j=0, \dots ,k$,  $\{ L_u^j \Pi u\ | \ u \in A \}$ is compact in $H^{s-k}_+$.
\end{lemma}
\noindent
{\em Proof of Lemma \ref{L-regularity}.} The statement is trivial for $k=0$. Let us first prove it for $k=1$. 
Assume that $A$ is a bounded subset of $H^1_{r, 0}$ and write $s=t+1$ with $t \in [0,1]$. 
Then for any $f\in H^1_+$, one has $f\in H^s_+$ if and only if 
$f,$ $Df$ belongs to $H^t_+$ with uniform bounds and with correspondence of compact subsets. For any $u$ in $H^1_{r,0}$, 
the operator $T_u$ 
maps bounded subsets of $H^t_+$ to bounded subsets of $H^t_+$. Hence for any $f \in H^1_+,$
$f,$ $Df\in H^t_+$ if and only if
$f,$ $L_u f \in H^t_+$ with bounds which are uniform with respect to $u \in A$ and with correspondence of compact subsets. This completes the proof for $k=1$. 
For $k \ge 2$, we argue by induction. 
Assume $k\ge 2$ is such that the statement is true for $k-1$. Let $A$ be a bounded subset of $H^k_{r,0}$, and let $s \in [k,k+1]$.
Then for any $f \in H^k_+,$  one has
$f\in H^s_+$ if and only if $f,$ $Df \in H^{s-1}_+$ with uniform bounds and correspondence of compact subets. Since $s-1\ge k-1\ge 1$, $H^{s-1}_+$ is an algebra
and hence $T_u $ maps bounded subsets of $H^{s-1}_+$ to bounded subsets of $H^{s-1}_+$. Consequently, $f,$ $Df \in H^{s-1}_+$
if and only if $f,$ $L_uf\in H^{s-1}_+$, with bounds which are uniform with respect to $u \in A$ and with correspondence of compact subsets. 
By the induction hypothesis, this is equivalent to $L_u^j f \in H^{s-k}_+$, for $j=0, \dots , k$ with bounds which are uniform with respect to $u \in A$  
and with correspondence of compact subsets.
\hfill $\square$

\medbreak

Let us now come back to the proof of Proposition \ref{positive}. We start with the case where $s=k$ is a positive integer
and assume that $A$ is a bounded subset of $H^k_{r,0}$. Applying Lemma \ref{L-regularity}, 
one easily shows by induction on $k$ that for any $u \in A$ ,
 $L_u^j\Pi u \in L^2_+$ for $j=0, \dots ,k$  with bounds which are uniform with respect to $u \in A$
and with correspondence of compact subsets. In other words, for any $u \in A$, $\Pi u$ belongs to the domain of $L_u^k$, 
with bounds  which are uniform with respect to $u \in A$ and with correspondence of compact subsets. 
On the other hand, if $u$ belongs to a bounded subset of 
$L^2_{r,0}$, $K_{u,0} $ maps the domain of $L_u^k$ into the space of sequences $(\xi_n)_{n \ge 0}$ such that $(\lambda_n(u)^\ell \xi_n)_{n \ge 0}\in h^0(\N_0)$ 
for every $\ell=0, \dots , k$, hence into $h^{k}(\N_0)$, with bounds which are uniform with respect to $u \in A$ and with correspondence of compact subsets. 
Hence for any $u\in A$, $\Phi (u)\in h_+^{k+1/2}$ with bounds, which are uniform with respect to $u \in A$
and with correspondence of compact subsets.

Finally, let $s\in (k,k+1)$ and assume that $A$  is a bounded subset of $H^k_{r,0}$. 
It then follows by Lemma \ref{L-regularity} that for any $u \in A$,  
$L_u^j\Pi u\in H^{s-k}_+$ for $j=0,\dots ,k$ with bounds which are uniform with respect to $u \in A$ and with correspondence of compact subsets. 
Applying $K_{u; s-k}$ and Lemma \ref{basis on Sobolev scale}, this means 
that $K_{u; s-k}(L_u^j\Pi u)\in h^{s-k}(\N_0)$ for $j=0, \dots ,k$, with bounds which are uniform with respect to $u \in A$ and with correspondence of compact subsets. Since
$$
K_{u, 0}(L_u^jf)=(\lambda_n(u)^j \la f\vert f_n\ra )_{n\ge 0}
$$
this implies $K_{u; 0}(\Pi u)\in h^{s}(\N_0)$, and hence $\Phi (u)\in h^{s+\frac 12}_+$ with bounds which are uniform with respect to $u \in A$. 
The proof of the converse is similar, taking into account that  by the case $s=k$ treated above, for any bounded subset $B$ of  $h_+^{k+1/2}$, the set $\Phi^{-1}(B)$ 
is bounded in $H^k_{r,0}$. The correspondence of compact subsets is established similarly.
\end{proof}
Arguing as in the proofs of Theorems \ref{Theorem 1}, \ref{Theorem 3} and \ref{Theorem 4}, one deduces from Proposition \ref{positive} the following
\begin{corollary}\label{results for s positive}
Let $s > 0$ and $c \in \R$. Then for any $t\in \R $,  the flow map $ S^t=\mathcal S(t, \cdot)$  of the Benjamin--Ono equation leaves
the affine space $H^{s}_{r,c}$,  introduced in \eqref{affine spaces}, invariant.
Furthermore, there exists an integral 
$ I_s : H^{s}_r \to \R_{\ge 0}$ of \eqref{BO},
satisfying
$$
\| v \|_{s} \le  I_s(v)\, , \quad \forall v \in H^{s}_r \, .
$$
In particular, one has
$$
\sup_{t \in \R} \|\mathcal S(t, v_0)\|_{s}
\le  I_s(v_0)\ , \quad \forall v_0 \in H^{s}_r\, .
$$
In addition, for any $v_0\in H^s_{r, c}$, the solution $t\mapsto \mathcal S(t,v_0)$ is almost periodic in $H^s_{r, c}$, 
and the traveling wave solutions are orbitally stable in $H^s_{r, c}$. 
\end{corollary}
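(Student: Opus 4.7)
The plan is to run through the proofs of Theorem \ref{Theorem 1}, Theorem \ref{Theorem 3}, and Theorem \ref{Theorem 4} with Proposition \ref{positive} substituted for Theorem \ref{extension Phi}. The structural fact that drives everything is that, in Birkhoff coordinates, the BO flow acts by
\begin{equation*}
\mathcal S_{c,B}^t(\zeta) = \bigl( \zeta_n\, e^{i t (\omega_n(\zeta) + 2cn)} \bigr)_{n \ge 1},
\end{equation*}
preserving each modulus $|\zeta_n|$ and therefore acting isometrically on $h^{s+1/2}_+$ for every $s \ge 0$. Combined with the homeomorphism $\Phi : H^s_{r,0} \to h^{s+1/2}_+$ supplied by Proposition \ref{positive}, this immediately yields invariance of $H^s_{r,0}$ under $\mathcal S_0^t$; invariance of the affine space $H^s_{r,c}$ then follows from the identity $\mathcal S(t, v_0) = \mathcal S_{[v_0]}(t, v_0 - [v_0]) + [v_0]$ in \eqref{formula with c} together with the conservation of $[v_0] = \langle v_0 | 1 \rangle$.

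For the integral, I would set
\begin{equation*}
F_s(R) := \sup_{\|\xi\|_{s+1/2} \le R} \|\Phi^{-1}(\xi)\|_s, \qquad I_s(v) := F_s\bigl(\|\Phi(v - [v])\|_{s+1/2}\bigr) + |[v]|,
\end{equation*}
where finiteness of $F_s$ is ensured by the bounded-to-bounded statement in Proposition \ref{positive}. The inequality $\|v\|_s \le I_s(v)$ then follows from the triangle inequality applied to $v = (v - [v]) + [v]$, and $I_s$ is constant along trajectories because both $\|\Phi(\,\cdot\, - [\,\cdot\,])\|_{s+1/2}$ and $[\,\cdot\,]$ are conserved. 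This yields the a priori bound $\sup_{t \in \R}\|\mathcal S(t, v_0)\|_s \le I_s(v_0)$.

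Almost periodicity and orbital stability I would then obtain by transcribing the proofs of Theorem \ref{Theorem 3} and Theorem \ref{Theorem 4}. For almost periodicity, Bochner's criterion reduces the claim to the extraction, from an arbitrary sequence of translates $(\tau_k)_{k \ge 1}$, of a subsequence along which the curves $t \mapsto \bigl(\zeta_n(v_0 - c)\, e^{i(t+\tau_k)(\omega_n+2cn)}\bigr)_{n \ge 1}$ converge in $C_b(\R, h^{s+1/2}_+)$; a Cantor diagonal argument produces a subsequence for which every phase $e^{i(\omega_n+2cn)\tau_{k_j}}$ converges, and uniformity in $t$ and $n$ is supplied by $(\zeta_n(v_0-c))_{n \ge 1} \in h^{s+1/2}_+$, whose tails are controlled. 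Relative compactness of the orbit in $H^s_{r,c}$ is the analogous statement at $\tau_k = 0$, deduced from compactness of the torus ${\rm{Tor}}(\Phi(v_0 - c)) \subset h^{s+1/2}_+$ and continuity of $\Phi^{-1}$. For orbital stability the contradiction argument of the proof of Theorem \ref{Theorem 4} carries over verbatim once the $H^{-s}/h^{1/2-s}$ continuity of $\Phi^{\pm 1}$ is replaced by the $H^s/h^{s+1/2}$ continuity of Proposition \ref{positive}, and the weight $n^{1-2s}$ controlling tails is replaced by $n^{1+2s}$ — a strengthening, not a weakening, of the control. No genuine obstacle arises; the only thing to check at each step is that Proposition \ref{positive} provides the same package of homeomorphism plus bounded-set control that Theorem \ref{extension Phi} provided in the low-regularity regime, which it does.
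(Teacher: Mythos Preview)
Your proposal is correct and follows precisely the route the paper takes: the paper's own proof consists of a single sentence stating that one argues as in the proofs of Theorems \ref{Theorem 1}, \ref{Theorem 3}, and \ref{Theorem 4}, replacing Theorem \ref{extension Phi} by Proposition \ref{positive}, and you have spelled out exactly that substitution in detail. Your definition of $I_s$ via $F_s(R) := \sup_{\|\xi\|_{s+1/2} \le R} \|\Phi^{-1}(\xi)\|_s$ matches the paper's construction in Remark \ref{def integral I_s} and the end of Section \ref{extension of Phi, part 2}.
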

\begin{remark}
Corollary \ref{results for s positive} significantly improves \cite[Theorem 1.3]{IMOS}, which provides polynomial (in $t$) bounds
of solutions of \eqref{BO} in $H^s_r$ for $1/2 < s \le 1$.
\end{remark}

\section{Ill-posedness in $H^{s}_r$ for $s < -1/2$}\label{illposedness for s < -1/2}

The goal of this appendix is to construct for any $c \in \R$ a sequence  $(v^{(k)}(t, x))_{k \ge 1}$ of smooth solutions of \eqref{BO} so that
\begin{itemize}
\item[(i)]  $(v^{(k)}(0, \cdot))_{k \ge 1}$ has a limit in $H^{-s}_{r, c}$ for any $s>\frac 12$ and 
\item[(ii)]  for any $t\ne 0$,  $(v^{(k)}(t, \cdot))_{k \ge 1}$ diverges in the sense of distributions, even after renormalizing the flow by a translation in the spatial variable.
\end{itemize}
Since the same arguments work for any $c \in \R$, we consider only the case $c = 0$. 
In a first step, let us review the results from \cite{AH}, using the setup developed in this paper:  the authors of \cite{AH}
construct a sequence of one gap potentials $v^{(k)}(t, \cdot)$ , $k \ge 1$, of the Benjamin-Ono equation so that  $v^{(k)}(0, \cdot)$ converges
in $H^{-s}_{r, 0}$ for any $s > 1/2,$ whereas for any $t \ne 0,$ $(v^{(k)}(t, \cdot))_{k \ge 1}$ diverges in the sense of distributions.
 Without further reference, we use notations and results from \cite[Appendix B]{GK}, where one gap potentials have
been analyzed. Consider the following family
of one gap potentials of average zero,
$$
u_{0,q}(x)=2{\rm{Re}} \big( q {\rm e}^{ix}/(1-q{\rm e}^{ix})\big)\, , \quad
0 < q < 1\, .
$$
The gaps $\gamma_n(u_{0,q})$, $n \ge 1$, of $u_{0,q}$ can be computed as
$$
\gamma_{1, q} := \gamma_1(u_{0,q}) = q^2/(1-q^2) \, , \qquad
\gamma_n(u_{0,q}) = 0 \, , \quad \forall n \ge 2 \, . 
$$
The 
frequency $\omega_{1, q} := \omega_1 (u_{0,q})$ 
is given by (cf. \eqref{frequencies in Birkhoff})
$$
\omega_{1,q} = 1 -2\gamma_{1,q} = \frac{1-3q^2}{1 - q^2}.
$$
The one gap solution, also referred to as traveling wave solution, of the BO equation
with initial data $u_{0,q}$ is then given by
$$
u_q(t,x)=u_{0,q}(x+\omega_{1,q}t)\, , 
\quad \forall t \in \R\, .
$$
Note that for any $s > 1/2$, 
$$
\lim_{q \to 1}u_{0,q} = 
2 {\rm{Re}} \Big(\sum_{k=1}^{\infty} {\rm e}^{ikx}\Big)
= \delta_0 - 1
$$ 
strongly in $H^{-s}_{r,0}$
where $\delta_0$ denotes the periodic Dirac $\delta-$distribution, centered at $0$. 
Since $\omega_{1,q} \to - \infty$ as $q \to 1$,
it follows that for any $t \ne 0,$
$u_q(t, \cdot)$ diverges
in the sense of distributions as $q \to 1$.

\vskip0.25cm

Note that by the trace formula (cf. \eqref{trace formula}) $\| u_{0,q}\|^2 = 2 \gamma_{1,q}$ and hence
$\omega_{1,q} + \| u_{0,q}\|^2 = 1$, implying that 
a renormalization of the spatial variable  by $\eta_q t$ with $\eta_q = \| u_{0,q}\|^2 = 1-\omega_{1,q}$ removes the divergence, i.e., 
for any $s > 1/2$,
$u_{0,q}(x+ t)$ converges in $H^{-s}_{r,0}$ for any $t$ as $q \to 1$. 
In order to construct examples where no such renormalization is possible, we consider two--gap solutions.
To keep the exposition as simple as possible, let
$u$ be a two-gap potential with $\gamma_1>0, \gamma_2>0$ and hence $\gamma_n = 0$ for any $n \ge 3$.
From \cite[Section 7]{GK} we know that $\Pi u(z)=-z\frac{Q'(z)}{Q(z)}$ where
\begin{equation}\label{ISF}
Q(z)=\det (I-zM)\ , \qquad \forall z \in \C, \quad |z| < 1\ ,
\end{equation}
and $M :=  (M_{np})_{0\leq n,p\leq 1}$ is a $2 \times 2$ matrix with
$M_{np} =\sqrt{\frac{\mu_{n+1}\kappa_p}{\kappa_{n+1}}}\frac{\zeta_{n+1}\overline {\zeta_p}}{\lambda_p-\lambda_n-1}$ or
$$
M_{np} =\sqrt{\frac{(\lambda_n+1-\lambda_0) \kappa_p}{\prod_{1\le q\ne n+1}   \Big(1-\frac{\gamma_q}{\lambda_q-\lambda_n-1}\Big) }}
\frac{\zeta_{n+1}\overline {\zeta_p}}{\lambda_p-\lambda_n-1}\ .
$$
Here $\zeta_0:=1$ and  $\zeta_n=\sqrt{\gamma_n}\, {\rm e}^{i\varphi_n}$ ($1 \le n \le2$) whereas  $\zeta_n = 0$ for any $n\ge 3$.
Moreover, along the flow of the Benjamin-Ono equation, we have
$\dot \varphi_n=\omega_n$, $1 \le n \le 2$,
with 
$$\omega_1=1-2\gamma_1-2\gamma_2\ ,\qquad  \omega_2=4-2\gamma_1-4\gamma_2\ .$$
Let us express the entries of $M$ in terms of $\gamma_1, \gamma_2$ and 
$\varphi_1,\varphi_2$. First, we remark that
\begin{eqnarray*}
\kappa_0&=&\Big(1-\frac{\gamma_1}{\lambda_1-\lambda_0} \Big) \Big(1-\frac{\gamma_2}{\lambda_2-\lambda_0} \Big)=\frac{2+\gamma_1}{(1+\gamma_1)(2+\gamma_1+\gamma_2)}\ ,\\
\kappa_1&=&\frac{1}{\lambda_1-\lambda_0}\Big(1-\frac{\gamma_2}{\lambda_2-\lambda_1} \Big)=\frac{1}{(1+\gamma_1)(1+\gamma_2)}\ .
\end{eqnarray*}
Then the above formula for $M_{np}$ yields
$$
M_{00} = -\sqrt{\frac{\gamma_1(2+\gamma_1)(1+\gamma_1+\gamma_2)}{2+\gamma_1+\gamma_2 } }   \, \frac{{\rm e}^{i\varphi_1}}{1+\gamma_1}\ ,
\,\,\,\,
M_{01} = \sqrt{\frac{1+\gamma_1+\gamma_2}{1+\gamma_2}}\frac{1}{1+\gamma_1}\ ,
$$
$$
M_{10} = -\sqrt{ \frac{\gamma_2}{2+\gamma_1+\gamma_2} }\frac{{\rm e}^{i\varphi_2}}{1+\gamma_1}\ ,
\qquad
M_{11} = -\sqrt{ \frac{\gamma_1\gamma_2(2+\gamma_1)   }{ 1+\gamma_2   }  }\frac{{\rm e}^{i(\varphi_2-\varphi_1)}}{1+\gamma_1}\ .
$$
Consequently,
$Q(z)=1+\alpha z+\beta z^2$
with $\alpha = - \text{Tr} (M)$ and $\beta = \det(M)$ or
$$
\begin{aligned}
\alpha  & = \frac{\sqrt{\gamma_1(2+\gamma_1)}}{1+\gamma_1}\Big(\sqrt{\frac{1+\gamma_1+\gamma_2}{2+\gamma_1+\gamma_2 } } {\rm e}^{i\varphi_1}+     
\sqrt{ \frac{\gamma_2   }{ 1+\gamma_2   }  }{\rm e}^{i(\varphi_2-\varphi_1)}\Big)\ , \\
\beta  & =\sqrt{\frac{(1+\gamma_1+\gamma_2)\gamma_2  }{ (1+\gamma_2)(2+\gamma_1+\gamma_2) }    } \ {\rm e}^{i\varphi_2}\ .
\end{aligned}
$$
Taking the limit $\gamma_1\to \infty , \gamma_2\to \infty $, one sees that
$$\alpha ={\rm e}^{i\varphi_1}+{\rm e}^{i(\varphi_2-\varphi_1)}+o(1)\ ,\qquad  \beta ={\rm e}^{i\varphi_2}+o(1)\ ,$$
which means that $Q(z)=(1+q_1z)(1+q_2z)$ where
$$q_1={\rm e}^{i\varphi_1}+o(1)\ ,\qquad  q_2={\rm e}^{i(\varphi_2-\varphi_1)}+o(1)\ .$$
Since $|q_1|<1\ ,\ |q_2|<1$, the family $u$ is  bounded in $H^{-s}_{r, 0}$ for any $s>\frac 12$.

\medskip

We define our sequence $(v_0^{(k)})_{k \ge 1}$ of initial data as the above two-gap potential 
$u$ with $\varphi_1(0)=\varphi_2(0)=0$, and $\gamma_1=\gamma_2 :=\gamma^{(k)}$
where $\gamma^{(k)} \to \infty $. 
At $t=0$, we infer that for any $z$ in the unit disc  
$$
\lim_{k \to \infty}\Pi v_0^{(k)}(z) =  \frac{2z}{1+z}
$$
and hence $v_0^{(k)}\to 2(1-\delta_{\pi})$ in $H^{-s}_{r,0}$ for any $s>\frac 12$. For $t\ne 0$,
we have
$$\varphi_1(t)=t(1-4\gamma ^{(k)})\ ,\qquad \varphi_2(t)-\varphi_1(t)=t(3-2\gamma ^{(k)})$$
and therefore
$$\Pi v^{(k)}(z,t)=\frac{ z{\rm e}^{i\varphi_1(t)} }{1+ z{\rm e}^{i\varphi_1(t)}  }+
\frac{ z{\rm e}^{i(\varphi_2-\varphi_1)(t)} }{1+ z{\rm e}^{i(\varphi_2-\varphi_1)(t)}  }+o(1)\ .$$
We thus conclude that $(\Pi v^{(k)}(z,t))_{k \ge 1}$ does not have a limit if $t\ne 0$ for any $z$ with $0<|z|<1$, even after renormalizing $z$ 
by a phase factor ${\rm e}^{i\eta_k(t)}$ with $\eta_k(t)$  a function of our choice. This proves that for $t\ne 0$, 
$(v^{(k)}(t, \cdot))_{k \ge 1}$ has no limit in $\mathcal D'(\T )$, even after renormalizing it by a $t-$dependent translation.
In particular, the renormalization of the spatial variable by $\| v_0^{(k)}\|^2  t=(4-\omega_2^{(k)})t$ does not
make $(v^{(k)}(t, \cdot))_{k \ge 1}$ convergent even in the sense of distributions.


\end{document}